\documentclass[11pt]{article}\textwidth 160mm\textheight 235mm
\oddsidemargin-2mm\evensidemargin-2mm\topmargin-10mm
\usepackage[table,xcdraw]{xcolor}
\usepackage{amsfonts}
\usepackage{amssymb}
\usepackage{graphicx}
\usepackage{amsmath}
\usepackage{amsthm}
\usepackage{enumitem}
\usepackage{tikz}
\usepackage{float}
\usepackage{cancel}
\usepackage{todonotes}
\usepackage{tcolorbox}
\tcbuselibrary{skins}
\usepackage{lipsum}
\usetikzlibrary{matrix}
\usetikzlibrary{arrows,shapes}
\usepackage{cite}
\usepackage{hyperref}
\hypersetup{colorlinks=true, urlcolor=blue}
\expandafter\let\expandafter\oldproof\csname\string\proof\endcsname
\let\oldendproof\endproof
\renewenvironment{proof}[1][\proofname]{%
  \oldproof[\ttfamily \scshape \bf #1. ]%
}{\oldendproof}
\def\O{{\cal O}}
\def\S{{\mathbb{S}}}

\def\B{\mathbb{B}}
\def\R{{\rm I\!R}}
\def\N{{\rm I\!N}}
\def\ox{\bar{x}}
\def\oy{\bar{y}}

\def\ow{\bar{w}}

\def\ss{\scriptsize }


\def\X{{\mathbb X}}

\def\hat{\widehat}

\def\dist{{\rm dist}}

\def\tto{\rightrightarrows}

\def\d{{\rm d}}

\def\sub{\partial}

\def\disp{\displaystyle}

\def\Bar{\overline}
\def\ra{\rangle}
\def\la{\langle}

\def\co{\mbox{\rm co}\,}
\def\cone{\mbox{\rm cone}\,}

\def\epi{\mbox{\rm epi}\,}
\def\dim{\mbox{\rm dim}\,}
\def\dom{\mbox{\rm dom}\,}

\def\ker{\mbox{\rm ker}\,}

\def\proj{\mbox{\rm proj}\,}

\def\argmin{\mbox{\rm argmin}\,}

\def\Lim{\mbox{\rm Lim}\,}
\def\Limsup{\mbox{\rm Lim sup}\,}

\def\dn{\downarrow}
\def\O{\Omega}

\def\st{\stackrel}
\def\oR{\Bar{\R}}

\def\dd{\delta}

\def\sce{\setcounter{equation}{0}}


\begin{document}
\vspace*{0.5in}
\begin{center}
{\bf PENALITY METHODS TO COMPUTE STATIONARY SOLUTIONS IN CONSTRAINED OPTIMIZATION PROBLEMS }\\[1 ex]
ASHKAN MOHAMMADI \footnote{Department of Mathematics and Statistics, Georgetown University, Washington, DC 20057, USA (ashkan.mohammadi@georgetown.edu).}

\end{center}
\vspace*{0.05in}
\small{\bf Abstract.} This paper is devoted to studying the stationary solutions of a general constrained optimization problem through its associated unconstrained penalized problems. We aim to answer the question, \'' what do the stationary solutions of a penalized unconstrained problem tell us about the stationary solutions of the original constrained optimization problem?". We answer the latter question by establishing relationships between global (local) minimizers and stationary points of the two optimization problems. Given the strong connection between stationary solutions between problems, we introduce a new approximate $\varepsilon$-stationary solution for the constrained optimization problems. We propose an algorithm to compute such an approximate stationary solution for a general constrained optimization problem, even in the absence of Clarke regularity. Under reasonable assumptions, we establish the rate $O(\varepsilon^{-2})$ for our algorithm, akin to the gradient descent method for smooth minimization. Since our penalty terms are constructed by the powers of the distance function, our stationarity analysis heavily depends on the generalized differentiation of the distance function. In particular, we characterize the (semi-)differentiability of the distance function $\mbox{dist} (. ;X)$ defined by a Fr\'echet smooth norm, in terms of the geometry of the set $X$. We show that $\mbox{dist} (. ;X)$ is semi-differentiable if and only if $X$ is geometrically derivable. The latter opens the door to design optimization algorithms for constrained optimization problems that suffer from Clarke irregularity in their objectives and constraint functions. 
\\[1ex]

{\bf Key words.} non-smooth non-convex optimization, exact penalty, variational analysis, stationary solutions, distance function\\[1ex]
{\bf  Mathematics Subject Classification (2000)}  49J53, 49J52, 65K99, 90C31

\newtheorem{Theorem}{Theorem}[section]
\newtheorem{Proposition}[Theorem]{Proposition}
\newtheorem{Remark}[Theorem]{Remark}
\newtheorem{Lemma}[Theorem]{Lemma}
\newtheorem{Corollary}[Theorem]{Corollary}
\newtheorem{Definition}[Theorem]{Definition}
\newtheorem{Example}[Theorem]{Example}
\newtheorem{Algorithm}[Theorem]{Algorithm}
\renewcommand{\theequation}{{\thesection}.\arabic{equation}}
\renewcommand{\thefootnote}{\fnsymbol{footnote}}

\normalsize

\section{Introduction}\sce
Penalty methods are among the most important methods in solving constrained optimization problems. Namely, a constrained optimization problem is replaced by a sequence of sub-problems in which the constraints are represented by terms added to the objective function. Usually, these subproblems are unconstrained optimization problems or constrained optimization problems with an easy constraint set, in the sense that projection onto the constraint set admits a closed-form. There are different ways to construct the penalty term; a widely used one is the smooth (quadratic) penalty. More precisely, in the framework of non-linear programming 
\begin{equation}\label{nlp}
\mbox{minimize} \;\;  \varphi (x)  \quad \mbox{subject to}\;\; g_i (x)  \leq 0 ~ i=1,...,l,  \quad h_j (x) = 0 ~ j=l+1,...,m
\end{equation}
the quadratic penalty associated with \eqref{nlp} is given by the following unconstrained problem
\begin{equation}\label{qpnlp}
	\mbox{minimize} \;\;  \varphi (x) + \rho \sum_{i=1}^{l} \max ^{~ ~~~2} \{\ g_i (x), 0\}  +  \rho \sum_{j=l+1}^{m}  h^2_j (x) \quad \mbox{subject to}\;\; x \in \R^n
\end{equation}
where $\rho > 0$ is the penalty parameter, whose job is to intensify the price of the constraint violation. Now the question is how close we can get to the solution of \eqref{nlp} by solving \eqref{qpnlp}.  It is well-known that a global minimizer of \eqref{nlp} can be achieved as a limiting point of the global minimizers of \eqref{qpnlp} when $\rho \to + \infty$. More precisely, if $x_k$ is a global minimizer of the unconstrained problem \eqref{qpnlp} with the penalty parameter $\rho_k$, then every limit point of the sequence $\{x_k\}_{k \in  \N}$ is a global minimizer of \eqref{nlp} provided that $\lim_{\substack{k\to\infty}} \rho_k = \infty$ ; see \cite[Theorem~17.1]{noc}. In an abstract framework, the above result is followed by the epi-convergence argument due to Rockafellar and Wets \cite{rw}. Namely, the sequence of smooth functions $$f_k (x) :=  \varphi (x) + \rho_k \sum_{i=1}^{l} \max ^{~ ~~~2} \{\ g_i (x), 0\}  +  \rho_k \sum_{j=l+1}^{m}  h^2_j (x)$$ epi-converges to the extended-real-valued function $$f(x):= \varphi (x) +  \sum_{i=1}^{l}   \delta_{-} (g_i (x))  +  \sum_{j=l+1}^{m}  \delta_{0} (h_j (x)),$$ where $\delta_{-} , \;  \delta_{0} : \R \to (-\infty , 0]$ are the indicator functions of the sets $(-\infty , \infty]$ and $\{0\}$ respectively, hence, any limit point of the sequence $x_k \in \argmin f_k (x)$ is a global minimizer of \eqref{nlp} by \cite[Theorem~7.33]{rw}. The latter results can be slightly improved by allowing $x_k$ be a sub-optimal provided that a constraint qualification is assumed; see \cite[Theorem~17.2]{noc} and \cite[Theorem~7.33]{rw} . Although the problem \eqref{qpnlp} is an unconstrained optimization problem, finding an (approximate) global optimal solution for \eqref{qpnlp} is impractical for non-convex problems. However, there are many fast optimization algorithms to find an (approximate) stationary point for the non-convex problem \eqref{qpnlp}; see \cite{ noc, fp, cp}. Motivated by the practicality of the obtaining an approximate stationary point for the unconstrained problem \eqref{qpnlp}, in this paper, we aim to establish a relationship between the approximate stationary points of problems \eqref{qpnlp} and \eqref{nlp}. More precisely, in Theorem \ref{ss}, we show that, if $x_k$ is a $\varepsilon$-stationary point of \eqref{qpnlp}, i.e., $\| \nabla f_{k} (x_k) \| \leq \epsilon$, then for sufficiently large $k$, $x_k$ is an approximate stationary point for \eqref{nlp}, in the sense that there exist so-called \textit{approximate} Lagrangian multipliers $\lambda_k^i$ and $\mu_k^j$ such that  
\begin{equation*}\label{appkkt}
\left\{\begin{matrix}
	\| \nabla \varphi(x_k) + \sum_{i=1}^{l} \lambda^i_k  \nabla g_i (x_k) + \sum_{j=l+1}^{m}   \mu^j_k \nabla h_j (x_k)\| \leq \varepsilon & \quad \mbox{ approximate stationarity} \\\\
	g_i(x_k) \leq \varepsilon ~i=1,...,l,\quad |  h_j (x_k) |  \leq \varepsilon ~ j=l+1,...,m& ~\mbox{approximate feasibility} \quad \\\\
	\lambda_k^i \geq  - \varepsilon ~i=1,...,l,\quad   \mu^j_k  \in \R ~ j=l+1,...,m& \quad ~~~~\mbox{approximate dual feasibility}  \\
\end{matrix}\right.
\end{equation*}
Note that finding a stationary point for \eqref{qpnlp} is equivalent to solving the equation $\nabla f_k (x) = 0$, which can be done by several ways; e.g., applying the gradient descent method on the unconstrained problem \eqref{qpnlp}, or directly applying the semi-smooth Newton method on the equation $\nabla f_k (x) = 0$; see \cite{fp}. Notice that since the smooth function $x \to \max^2\{x,0\}$ is not twice differentiable at origin,  the mapping $x \to \nabla f_k (x)$ is not necessarily differentiable. Therefore, for solving $\nabla f_k (x) =0$, one cannot apply the classical smooth Newton method. It is natural to ask if one can truly obtain a stationary point of \eqref{nlp} by solving only one equation $\nabla f_k (x)$? In another word, if (under some conditions) any of $x_k$ can itself be a stationary point of \eqref{nlp}, i.e., $x_k$ be a feasible solution for \eqref{nlp} satisfying KKT conditions. It turns out that, by a quick computation, if any $x_k$ is a feasible solution for \eqref{nlp}, then $x_k $ can\textbf{not} be a stationary solution for \eqref{nlp} unless $\nabla \varphi (x_k) = 0.$ It is quite often that, in a constrained optimization problem, no feasible solution vanishes the gradient of the objective function. Consequently, all stationary solutions of \eqref{qpnlp}, for any $\rho$, remain infeasible for \eqref{nlp}. The latter is the main disadvantage of the quadratic penalty, and it won't be corrected unless one entirely changes the penalty term. Therefore, it is common for one uses the exact non-smooth penalty. Namely
\begin{equation}\label{epnlp}
	\mbox{minimize} \;\;  f_{\rho} (x) := \varphi (x) + \rho \sum_{i=1}^{l} \max  \{\ g_i (x), 0\}  +  \rho \sum_{j=l+1}^{m}  | h_j (x) | \quad \mbox{subject to}\;\; x \in \R^n
\end{equation}
It is well-known that if  $\ox$ is a local minimizer for \eqref{nlp} then, under a constraint qualification, $\ox$ is a local minimizer of \eqref{epnlp} for sufficiently large $\rho$. The latter implication helps derive necessary optimality conditions for \eqref{nlp}, and it has been well-investigated in the literature for different classes of optimization problems; see \cite{jzz, mm21, i, noc, hko}. However, for a numerical purpose, it is important to investigate the relationship between stationary solutions of problems \eqref{nlp} and \eqref{epnlp}. Indeed, by a quick computation, one can observe that if such a $x_{\rho}$ happens to be an (approximate) stationary solution for \eqref{epnlp} and feasible for the problem \eqref{nlp}, then $x_{\rho}$ is indeed an (approximate) stationary point for \eqref{nlp}; see Theorem \ref{spp} and Example \ref{egkkt}. Motivated by this observation,  in this paper, we investigate the sufficient conditions forcing $x_{\rho}$ to fall into the feasible solution set when $\rho >0$ is chosen sufficiently large. The rest of the paper is organized as follows. In section \ref{sect02}, we review the variational analysis tools needed for our analysis, and we investigate the generalized differential properties of distance functions; in particular, we calculate the subderivative of the distance functions defined by Fr\'echet smooth norms. In section \ref{sec03}, we consider exact non-smooth penalties. We investigate the link between stationary solutions of constrained optimization problems and their associated unconstrained penalized problems. In section \ref{sec04}, we propose a new definition of approximate stationary solutions not requiring feasibility. Later in section \ref{sec04}, we propose a first-order optimization algorithm to compute such approximate stationary solutions.  
   
\section{Preliminaries for first-order variational analysis }\sce \label{sect02}
Throughout this paper, we mostly use the standard notations of variational analysis and generalized differentiation; see, e.g. \cite{m18,rw}. For a nonempty set $\Omega\subset X$, the notation $x\st{\O}{\to}\ox$ indicates that $x\to\ox$ with $x\in\O$, while $\co\O$ and $\cone\O$ stand for the convex and conic hulls of $\O$, respectively. The indicator function $\dd_\O$ of a set $\O$ is defined by $\dd_\O(x):=0$ for $x\in\O$ and $\dd_\O(x):=\infty$ otherwise, while the Euclidean distance between $x\in\X$ and $\O$ is denoted by $\dist(x;\O)$. Sometimes we need to use $\ell_1$-norm for the distance of two sets, in latter case, we again use the same term ''$\dist $", but we clarify the use of $\ell_1$ norm.  For a closed set $\O$, we  denote $\proj_{\O} (x)$, the Euclidean projection on the set $\O$. We write $x=o(t)$ with $x\in\X$ and $t\in\R_+$ indicating that $\frac{\|x\|}{t}\dn 0$ as $t\dn 0$, where $\R_+$ (resp.\ $\R_-$) means the collection of nonnegative (resp.\ nonpositive) real numbers. Recall also that $\N:=\{1,2,\ldots\}$.
Given a nonempty set $\O \subset \R^n $ with $\ox\in \O $, the  tangent cone $T_{\O}(\ox)$ to $\O$ at $\ox$ is defined by
\begin{equation*}\label{2.5}
T_{\O}(\ox)=\big\{w\in \R^n |\;\exists\,t_k{\dn}0,\;\;w_k\to w\;\;\mbox{ as }\;k\to\infty\;\;\mbox{with}\;\;\ox+t_kw_k\in  \O \big\}.
\end{equation*}
\begin{Definition}[\bf geometrically derivable sets\cite{rw}] 
A tangent vector $w \in T_{X}(\ox)$ is said to be \textit{derivable}, if there exist $\varepsilon >0 $ and $\zeta : [0 , \varepsilon] \to X$ with $\zeta(0) = \ox$ and $\zeta'_{+} (0) = w$. The set $X$ is called geometrically derivable at $\ox$ if every tangent vector to $X$ at $\ox$ is derivable. We say $X$ is geometrically derivable if it is geometrically derivable at any $x \in X$.   
\end{Definition}
The class of geometrically derivable sets is considerably large. It is not difficult to check that the derivability is preserved under finite unions. Therefore, the finite union of convex sets is geometrically derivable as every convex set is geometrically derivable. A consequence of the latter result is that a non-convex polyhedron is geometrically derivable. Furthermore, under mild constraint qualification, the derivability is preserved under intersections and pre-image of smooth mappings \cite[Theorem~4.3]{mm21}. The derivability of $X$ at $\ox$ can equivalently be described by the relation $T_{X}(x) = \Lim_{t \dn 0} \frac{X - \ox}{t}$, where the limit means the set-limit in the sense of set-convergence \cite[p.~152]{rw}. Recall from \cite{mms1} that $f \colon\R^n\to\oR$ is {\em Lipschitz continuous} around $\ox\in\dom f$ {\em relative} to some set $\O\subset\dom f$ if there exist a constant $\ell\in\R_+$ and a neighborhood $U$ of $\ox$  such that
\begin{equation*}
| f(x)- f(u)|\le\ell\|x-u\|\quad\mbox{for all }\quad x,u\in\O\cap U.
\end{equation*}
 
The main advantage of defining Lipschitz continuity \textbf{relative} to a set, is not missing the extended-real-valued functions for the study of constrained optimization. 
Piecewise linear-quadratic functions and indicator functions of nonempty sets are important examples of extended-real-valued functions that are Lipschitz continuous relative to their domains around any point $\ox\in\dom f$. Similarly we can define the calmness of $f$ \textbf{at} point $\ox$ relative to a set. In particular, $f$ is called calm at $\ox$ from below if there exist the $\ell > 0$ and a neighborhood $U$ of $\ox$ such that
\begin{equation*}
 f(x)- f(\ox) \geq  - \ell\|x- \ox\|\quad\mbox{for all }\quad x \in U.
\end{equation*}
If $f$ is Lipschitz countinuous around $\ox$ relative to its domain, then $f$ is calm at $\ox$ from below. Given a function  $f:\R^n \to \oR$ and a point $\ox$ with $f(\ox)$ finite, the subderivative function $\d f(\ox)\colon\R^n\to[-\infty,\infty]$ is defined by
\begin{equation}\label{subderivatives}
	{\mathrm d}f(\ox)(w):=\liminf_{\substack{
			t\dn 0 \\
			w' \to w
	}} {\frac{f(\ox+tw')-f(\ox)}{t}}.
\end{equation}
Subderivative and tangent vectors are related by the relation $\epi \d f(\ox) = T_{\ss \epi f} (\ox , f(\ox))$. It is clear, from the latter tangent cone relationship, that $\d f(\ox)(.)$ is always lower semicontinuous, also $\d f(\ox)$ is proper if $f$ is calm at $\ox$ from below. Furthermore, We have $\dom \d f(\ox) = T_{\ss \dom f} (\ox )$ if $f$ is Lipschitz continuous around $\ox$ relative to $\dom f$, \cite[Lemma~4.2]{mm21}. If $f$ is convex or concave and $\ox \in \mbox{int} \; \dom f$ then \eqref{subderivatives} reduces to the usual directional derivative i.e.,
\begin{equation}\label{dderivative}
{\mathrm d}f(\ox)(w)= f' (x ; w) : =\lim_{\substack{
   t\dn 0 \\
    }} {\frac{f(\ox+tw)-f(\ox)}{t}}.
\end{equation}
Recall from \cite[Definition~7.20]{rw} that a vector-valued function $F:  \R^n \to \R^m$  is semi-differrentiable at $\ox$ for (direction) $w$, if the following limit exists 
\begin{equation}\label{semiderivatives}
{\mathrm d}F(\ox)(w):=\lim_{\substack{
   t\dn 0 \\
  w' \to w
  }} {\frac{F(\ox+tw')-F(\ox)}{t}}.
\end{equation}
$F$ is called semi-differentiable at $\ox$, if the above limit exists for all $w \in \R^n$, in this case, the continuous mapping $\d F(\ox) : \R^n \to \R^m$ is called the semi-derivative of $F$ at $\ox$. We say $F$ is semi-differentiable (without mentioning $\ox$) if $F$ is semi-differentiable everywhere in $\R^n$. As it is often clear from context, let us not use different notations for subderivative and semi-derivative, we use $\d$ for both them. Following \cite[Theorem~7.21]{rw} $F$ is semi-differentiable at $\ox$ if and only if there exists $h: \R^n \to \R^m$, continuous and homogeneous of degree one such that 
\begin{equation}\label{semidiff-ex}
F(x) = F(\ox) + h(x - \ox) + o (\| x - \ox \|),
\end{equation}
iindeed  $h = \d F(\ox)$. Observe from \eqref{semidiff-ex} that the difference between Fr\'echet differentiability and semi-differentiability is the linearity of the $h$. More precisely, $F$ is Fr\'echet differentiable at $\ox$ if and only if $F$ is semi-differentiable at $\ox$ and $\d F(\ox)(.)$ is a linear function. It is easy to see that the semi-differentiability and classical directional differentiability concepts are equivalent for the locally Lipschitz continuous functions. In contrast with \cite[p.~156]{cp}, in the definition of semi-differentiability, we do not require $F$ to be locally Lipschitz continuous. However, one can observe that if $F$ is semi-differentiable at $\ox$, then $F$ is calm at $\ox$, that is there exists a $\ell > 0$ such that for all $x$ sufficiently close to $\ox$
$$   \| F(x)  -  F(\ox)\|  \leq \ell \; \| x -  \ox \|  .$$
In the following remark, we highlight the advantage of the subderivative over the classical directional derivative.  
\begin{Remark}[\bf subderivative vs. classical directional derivative]\label{subrel}{\rm
It is straightforward to check that if $f$ is Lipschitz continuous around $\ox $ relative to $\O$ with a constant $\ell > 0$, then $ | \d f(x)(w)| \leq \ell \; \|w\| $ for all $w \in T_{\O} (\ox)$. This result will no longer remain true if one uses the classical directional derivative instead of subderivative; e.g.,  take $ f(x , y) = 0 $ for  all $(x,y) \in \O:= \{(x,y) \big|\;  x^2 \leq y \}$ and $f(x,y) = x$ otherwise. In the this example, $f$ is directionally differentiable everywhere and Lipschitz continuous relative to $\O$ with a constant $\ell = \frac{1}{2}$, but $f' ((0,0) ; (1, 0)) = 1$. In \cite{mf}, we established a rich calculus for subderivative making it a distinguished generalized derivative tool in variational analysis. Unfortunately, the classical directional derivative, as a less subtle derivative, do not provide such calculus in the absence of Lipschitz continuity. For instance, chain rule fails for the classical directional derivative of the composition $f \circ F$ at $x=0$ in direction $w=1$ where $f$ is chosen the aforementioned function and the smooth function $F(x):=(x,x^2)$. }
\end{Remark}
Given a positive number $\varepsilon >0 $, a function  $f:\R^n \to \oR$, and a point $\ox$ with $f(\ox)$ finite, the Fr\'echet subdifferential and $\varepsilon$-Fr\'echet subdifferential of $f$ is defined respectively by 
$$  \hat{\sub} f(\ox) := \{ v \in \R^n \; \big|~\la v , w\ra \leq \d f(\ox)(w), \quad \forall w \in \R^n  \} $$
$$  \hat{\sub}_{\varepsilon}  f(\ox) := \{ v \in \R^n \; \big|~\la v , w\ra - \varepsilon \| w\| \leq \d f(\ox)(w), \quad \forall w \in \R^n  \} $$
Recall from \cite{mm21} that $f$ is called Dini-Hadamard regular at $\ox$ if $\d f(\ox)(w) = \sup_{v \in \hat{\sub} f(\ox)} \la v ,w \ra.$ Following \cite{mm21}, Dini-Hadamard regularity is implied by Clarke regularity if $f$ is assumed locally Lipschitz continuous. In the following lemma we point out to an important consequence of Dini-Hadamard regularity. 
\begin{Lemma}\label{epsilonenlarge} Let $f : \R^n \to \R$ be Lipschitz countinuous around $\ox$ and  Dini-Hadamard regular at $\ox$. Then, $  \hat{\sub}_{\varepsilon}  f(\ox)  =  \hat{\sub}f(\ox) + \varepsilon   \B$. 
\end{Lemma}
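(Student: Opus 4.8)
The plan is to prove the two inclusions separately, observing that only one of them requires Dini--Hadamard regularity. For the easy inclusion $\hat{\sub}f(\ox) + \varepsilon\B \subseteq \hat{\sub}_{\varepsilon} f(\ox)$ I would argue directly: given $v \in \hat{\sub}f(\ox)$ and $b \in \B$, the estimate $\la \varepsilon b, w\ra \le \varepsilon\|w\|$ yields $\la v + \varepsilon b, w\ra - \varepsilon\|w\| \le \la v, w\ra \le \d f(\ox)(w)$ for every $w \in \R^n$, so that $v + \varepsilon b \in \hat{\sub}_{\varepsilon} f(\ox)$ by definition. This direction uses neither the Lipschitz nor the regularity hypothesis.

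For the reverse inclusion, set $K := \hat{\sub}f(\ox)$ and record three structural facts about $K$. First, $K$ is convex and closed, being an intersection of closed half-spaces. Second, $K$ is bounded: Lipschitz continuity of $f$ around $\ox$ together with Remark \ref{subrel} gives $\la u, w\ra \le \d f(\ox)(w) \le \ell\|w\|$ for all $u \in K$ and all $w$, whence $\|u\| \le \ell$, so $K$ is in fact compact. Third, $K$ is nonempty: Dini--Hadamard regularity identifies $\d f(\ox)(\cdot)$ with the support function $\sigma_K(\cdot) := \sup_{u \in K}\la u, \cdot\ra$, and since $\d f(\ox)$ is finite-valued (again by Remark \ref{subrel}), $K = \emp$ is impossible, as that would force $\sigma_K \equiv -\infty$ off the origin.

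With these facts in hand, take any $v \in \hat{\sub}_{\varepsilon} f(\ox)$. Its defining inequality rewrites, through the regularity identity, as $\la v, w\ra \le \sigma_K(w) + \varepsilon\|w\|$ for all $w$. The key observation is that $\varepsilon\|w\| = \sigma_{\varepsilon\B}(w)$ and that support functions are additive under Minkowski sums, so $\sigma_K(w) + \varepsilon\|w\| = \sigma_{K + \varepsilon\B}(w)$, and the condition becomes $\la v, w\ra \le \sigma_{K+\varepsilon\B}(w)$ for every $w$. Since $K + \varepsilon\B$ is a nonempty compact convex set, the standard support-function characterization of membership in a closed convex set (a consequence of the separation theorem) forces $v \in K + \varepsilon\B = \hat{\sub}f(\ox) + \varepsilon\B$, which closes the argument. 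I expect the only delicate point to be the verification that $K$ is a nonempty compact convex set, since that is precisely what licenses the separation step; it is exactly here that the Lipschitz and Dini--Hadamard regularity hypotheses are used, and the conclusion would fail without them.
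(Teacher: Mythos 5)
Your proof is correct, but it takes a genuinely different route from the paper's. Both arguments handle the easy inclusion $\hat{\sub}f(\ox)+\varepsilon\B\subseteq\hat{\sub}_{\varepsilon}f(\ox)$ identically, straight from the definitions. For the hard inclusion, the paper absorbs the term $\varepsilon\|w\|$ into a subderivative: it reads the defining inequality of $v\in\hat{\sub}_{\varepsilon}f(\ox)$ as $\la v,w\ra\le \d f(\ox)(w)+\varepsilon\,\d\big(\|\cdot-\ox\|\big)(\ox)(w)$, invokes the subderivative sum rule \cite[Theorem~4.4]{mm21} to conclude $v\in\hat{\sub}\big(f+\varepsilon\|\cdot-\ox\|\big)(\ox)$, and then splits this subdifferential via the sum rule \cite[Corollary~5.3]{mm21}, which is where Dini--Hadamard regularity (of $f$ and of the norm) enters; in other words, the paper exhibits the $\varepsilon$-enlargement as the exact Fr\'echet subdifferential of the norm-perturbed function $f+\varepsilon\|\cdot-\ox\|$. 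You instead stay entirely inside classical convex analysis: regularity converts $\d f(\ox)$ into the support function of $K=\hat{\sub}f(\ox)$, Lipschitz continuity makes $K$ nonempty, convex and compact, and the identity $\sigma_K(\cdot)+\varepsilon\|\cdot\|=\sigma_{K+\varepsilon\B}(\cdot)$ together with the separation-based membership criterion for the nonempty closed convex set $K+\varepsilon\B$ closes the argument. Your route is self-contained (it needs only the support-function characterization of closed convex sets) and isolates exactly how each hypothesis is used --- Lipschitz continuity for compactness, regularity for the support-function identity --- whereas the paper's route is shorter but outsources the work to the two calculus rules of \cite{mm21}, in exchange for a perturbation viewpoint that does not require identifying $\hat{\sub}f(\ox)$ explicitly and adapts more readily to settings where $f$ is only Lipschitz relative to a set.
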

\begin{proof}
First, without using any assumptions, we show that 	$ \hat{\sub}_{\varepsilon}  f(\ox) + \varepsilon   \B \subseteq \hat{\sub}_{\varepsilon}  f(\ox) .$ To prove the latter pick $v \in \hat{\sub}_{\varepsilon}  f(\ox)$  and $b \in \B$. For all $w \in \R^n$, we have  
$$  \la v+ \varepsilon b , w \ra  - \varepsilon \|w\| \leq  \la v ,  w \ra+  \varepsilon (1-\|b\|)\|w\|  \leq  \la v ,  w \ra \leq \d f(x) (w), $$
which yields $ v+ \varepsilon b  \in \hat{\sub}_{\varepsilon}  f(\ox) .$ To prove the other inclusion, pick $v \in \hat{\sub}_{\varepsilon}  f(\ox)$ then $  \la v , w \ra  - \varepsilon \|w\| \leq  \la \d f(x) (w)$ equivalently $\la v , w \ra \in \d f(\ox) (w) + \varepsilon \d \| . - \ox \| (\ox)(w) $, thus by subderivative sum rule in \cite[Theorem~4.4]{mm21}, we have  $\la v , w \ra \in \d \big( f + \varepsilon \| . - \ox \| \big) (\ox)(w) $ meaning that $v \in  \hat{\sub} \big( f + \varepsilon \| . - \ox \| \big) (\ox).$ Since  both $f$  and $\| . - \ox \|$ are Dini-Hadamard regular at $\ox$, by the subdifferential sum rule in  \cite[Corollary~5.3]{mm21}, we have 
$$v \in  \hat{\sub} f(\ox) + \varepsilon  \hat{\sub} \big( \| . - \ox \| \big) (\ox) =  \hat{\sub} f(\ox) + \varepsilon B.$$
\end{proof}

For given set $\O$ and a point$\ox \in \O$, the Fr\'echet normal cone at $\ox$ to the s$\O$ is defined by $\hat{N}_{\O} (\ox) := \hat{\sub} \delta_{\O} (\ox)$ that is
$$  \hat{N}_{\O} (\ox) = T^{*}_{\O}(\ox) :=\{ v \in \R^n \; \big|~\la v , w\ra \leq 0,  \quad \forall w \in T_{\O}(\ox)  \} $$
Next, we are going to review the subderivative and subdifferential chain rules for the composite function $f: = g \circ F $. We need to consider a qualification condition to achieve this, as we deal with extended-real-valued functions.
\begin{Definition}[\bf metric subregularity qualification condition] Let the composite function $f := g \circ F$ be finite at $\ox$, where $g : \R^m \to \oR$ and $F: \R^n \to \R^m  $. It is said that the metric subregularity qualification condition (MSQC) holds at $\ox$ for the composite function $f = g \circ F$ if there exist a $\kappa >0$ and $U$ an open neighborhood of $\ox$ such that for all $x \in U$ the following error bound holds :
\begin{equation}\label{msqc}
\dist ( x \; ; \; \dom f ) \leq \kappa \; \dist (F(x) \;  ; \; \dom g  ) 
\end{equation}
Similarly, we say the metric subregularity constraint qualification condition holds for the constraint set $\O  :=  \{ x \in \R^n | \;  F(x) \in X  \}$ at $\ox$ if MSQC holds for the composite function $\delta_{X} \circ F$, that is there exist a $\kappa > 0$ and $U$, an open neighborhood of $\ox$, such that for all $x \in U$ the following estimates holds:
\begin{equation}\label{msqccons}
\dist ( x \; ; \; \O ) \leq \kappa \; \dist (F(x) \;  ; \; X  )
\end{equation}
\end{Definition}
The above version of metric subregularity condition \eqref{msqc} for the composite function $g \circ F$ was first introduced in \cite[Definition~3.2]{mms1} by the author, which only concerns the domain of the functions $f$ and $g$, not their epigraphs as it was considered in \cite{io}. It is clear that the metric subregularity qualification condition is a robust property, in the sense that if \eqref{msqc} holds at $\ox$ then it also holds at any point sufficiently close to $\ox$. Also, observe that the metric subregularity qualification is invariant under changing space's norm. The latter is because all norms are equivalent in finite-dimensional vector spaces. The metric subregularity condition \eqref{msqc} has shown itself a reasonable condition under which the first and second-order calculus rules hold for amenable settings both in finite and infinite-dimensional spaces; see \cite{mm21, mms1, ms, mms2}. Metric subregularity constraint qualification is implied by Robinson constraint qualification. In particular, in the nonlinear programming, it is implied by Mangasarian Fromovitz constraint qualification, and in convex programming, it is implied by Slater constraint qualification; for the comparison of various types of constraint qualifications; see \cite[Proposition~3.1]{mms1} and \cite[Proposition~3.1]{mm21}. 
\subsection{Calculus rules via subderivative}

In the following theorem, we recall the subderivative and subdifferential which were recently established in \cite{mf} and \cite{mms1}. 
\begin{Theorem}[\bf subderivative chain rule]\label{fcalc}
Let $f: = g \circ F $ be finite at $\ox$ where $g: \R^m \to \oR$ is Lipschitz continuous relative to $\dom g$ and $F:\R^n \to \R^m $ is semi-differentiable at $\ox $ . Further, assume the metric subregularity qualification condition \eqref{msqc} holds. Then, the following subderivative chain rule holds
\begin{equation}\label{fcalc1}
\d f(\ox) (w) = \d g ( F(\ox)) ( \d F(\ox) w)  \quad \quad \forall w \in \R^n  .
\end{equation}
Additionally, if $g$ is convex and $F$ is continuously differetiable around $\ox$, then  $g \circ F$ is Dini-Hadamard regular at $\ox $, and the following subdifferential chain rule holds:
\begin{equation}\label{schian}
	\hat{\sub} f(\ox) =    \nabla F(\ox)^{*}  \;  \hat{\sub} g ( F(\ox)) 
\end{equation}
\end{Theorem}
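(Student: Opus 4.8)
The plan is to establish \eqref{fcalc1} by proving the two inequalities ``$\ge$'' and ``$\le$'' separately, writing $u:=\d F(\ox)(w)$ throughout and recalling from \eqref{semidiff-ex} that semi-differentiability gives the first-order expansion $F(x)=F(\ox)+\d F(\ox)(x-\ox)+o(\|x-\ox\|)$ with $\d F(\ox)$ continuous and homogeneous of degree one. The inequality $\d f(\ox)(w)\ge\d g(F(\ox))(u)$ is the routine one and uses neither the MSQC nor the Lipschitz hypothesis: picking sequences $t_k\dn 0$, $w_k\to w$ that realize the liminf defining $\d f(\ox)(w)$ (assuming this value finite, else there is nothing to prove), the quotients $z_k:=t_k^{-1}(F(\ox+t_kw_k)-F(\ox))$ converge to $u$ by semi-differentiability, so $t_k^{-1}(f(\ox+t_kw_k)-f(\ox))=t_k^{-1}(g(F(\ox)+t_kz_k)-g(F(\ox)))$ is a sequence admissible in the liminf defining $\d g(F(\ox))(u)$; taking $\liminf$ yields the claim.

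The crux is the reverse inequality $\d f(\ox)(w)\le\d g(F(\ox))(u)$, and this is where I expect the main difficulty to lie. I would start from sequences $s_k\dn 0$, $u_k\to u$ realizing $\d g(F(\ox))(u)$ (finite case), so that $y_k:=F(\ox)+s_ku_k\in\dom g$. The semi-differentiable expansion at the trial points $x_k:=\ox+s_kw$ gives $F(x_k)=F(\ox)+s_ku+o(s_k)$, whence $\dist(F(x_k);\dom g)\le\|F(x_k)-y_k\|=s_k\|u-u_k\|+o(s_k)=o(s_k)$. Now the metric subregularity qualification condition \eqref{msqc} converts this into $\dist(x_k;\dom f)\le\kappa\,\dist(F(x_k);\dom g)=o(s_k)$, so I can select $\tilde x_k\in\dom f$ with $\|\tilde x_k-x_k\|=o(s_k)$; writing $\tilde x_k=\ox+s_k\tilde w_k$ then forces $\tilde w_k\to w$. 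Expanding $F$ at $\tilde x_k$ and using continuity of $\d F(\ox)$ gives $F(\tilde x_k)=F(\ox)+s_ku+o(s_k)$, so $\|F(\tilde x_k)-y_k\|=o(s_k)$ with both arguments in $\dom g$; the Lipschitz continuity of $g$ \emph{relative to} $\dom g$ then yields $|g(F(\tilde x_k))-g(y_k)|\le\ell\,\|F(\tilde x_k)-y_k\|=o(s_k)$. Consequently $s_k^{-1}(f(\tilde x_k)-f(\ox))$ has the same limit as $s_k^{-1}(g(y_k)-g(F(\ox)))$, namely $\d g(F(\ox))(u)$, and since $(s_k,\tilde w_k)$ is admissible in the liminf defining $\d f(\ox)(w)$ we conclude $\d f(\ox)(w)\le\d g(F(\ox))(u)$. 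The delicate point is that the MSQC must be used precisely to pull the $\dom g$-feasible points $y_k$ back to $\dom f$-feasible points $\tilde x_k$ \emph{without losing control of the direction} $\tilde w_k\to w$, while the relative (rather than global) Lipschitz property of $g$ is exactly what lets me compare $g(F(\tilde x_k))$ with $g(y_k)$ even though $g\equiv+\infty$ off $\dom g$.

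For the second assertion I would first note that continuous differentiability is a special case of semi-differentiability with $\d F(\ox)=\nabla F(\ox)$ \emph{linear}, so the already proved \eqref{fcalc1} gives $\d f(\ox)(w)=\d g(F(\ox))(\nabla F(\ox)w)$. Since $g$ is convex, $\d g(F(\ox))(\cdot)$ is its directional derivative and equals the support function of the nonempty, convex, compact set $\hat\sub g(F(\ox))=\sub g(F(\ox))$; hence $\d f(\ox)(w)=\sup\{\la\zeta,\nabla F(\ox)w\ra:\zeta\in\hat\sub g(F(\ox))\}=\sup\{\la\nabla F(\ox)^{*}\zeta,w\ra:\zeta\in\hat\sub g(F(\ox))\}$, which is the support function $\sigma_K$ of the convex compact set $K:=\nabla F(\ox)^{*}\hat\sub g(F(\ox))$. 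The inclusion $K\subseteq\hat\sub f(\ox)$ is immediate from the definition of the Fr\'echet subdifferential, and conversely, because $\d f(\ox)=\sigma_K$ and $K$ is closed and convex, the set $\{v:\la v,w\ra\le\d f(\ox)(w)\ \forall w\}$ coincides with $K$ itself; this simultaneously proves the chain rule \eqref{schian} and, since then $\d f(\ox)(w)=\sup_{v\in\hat\sub f(\ox)}\la v,w\ra$, the Dini--Hadamard regularity of $g\circ F$ at $\ox$.
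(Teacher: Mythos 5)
Your proof of the first assertion, the subderivative chain rule \eqref{fcalc1}, is correct and self-contained. The inequality $\d f(\ox)(w)\ge \d g(F(\ox))(\d F(\ox)w)$ by transporting sequences that realize $\d f(\ox)(w)$ through $F$, and the reverse inequality by using \eqref{msqc} to pull the points $F(\ox)+s_ku_k\in\dom g$ back to points $\tilde x_k=\ox+s_k\tilde w_k\in\dom f$ with $\tilde w_k\to w$, followed by the Lipschitz continuity of $g$ \emph{relative to} $\dom g$ to compare $g(F(\tilde x_k))$ with $g(y_k)$, is exactly the right mechanism (note that $\tilde x_k\in\dom f$ automatically gives $F(\tilde x_k)\in\dom g$, so the relative Lipschitz estimate is legitimately applicable). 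Be aware that the paper itself does not prove this theorem: it cites \cite[Theorem~3.4]{mf} for \eqref{fcalc1} and \cite[Theorem~3.6]{mms1} for \eqref{schian}, so your argument here is a genuine reconstruction of the first cited result.

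The second assertion, however, contains a genuine gap. You assert that $\hat{\sub} g(F(\ox))$ is compact, and from this that $K:=\nabla F(\ox)^{*}\hat{\sub} g(F(\ox))$ is closed. Compactness is false in the generality of the theorem: $g$ is an extended-real-valued convex function that is only Lipschitz continuous relative to its domain, and whenever $F(\ox)\notin\inte\dom g$ the set $\hat{\sub} g(F(\ox))$ is unbounded. In the paper's principal application $g=\delta_X$, it equals the normal cone $\hat{N}_X(F(\ox))$, an unbounded cone at every boundary point of $X$. What does hold in general is that $\d g(F(\ox))$ is proper (by relative Lipschitz continuity, hence calmness from below), lower semicontinuous and sublinear (by convexity), hence equal to the support function of the nonempty \emph{closed convex} set $\hat{\sub} g(F(\ox))$; then $\d f(\ox)=\sigma_K$ and the bipolar argument yield only $\hat{\sub} f(\ox)=\cl K$, not $\hat{\sub} f(\ox)=K$. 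Since a linear image of an unbounded closed convex set need not be closed, the chain rule \eqref{schian} is precisely equivalent to the closedness of $K$ --- so your last step assumes exactly what must be proved. Closing this gap is the substance of \cite[Theorem~3.6]{mms1}: one must invoke the metric subregularity condition a second time, quantitatively (for instance via exact penalization, producing for each $v\in\hat{\sub} f(\ox)$ a multiplier $\zeta\in\hat{\sub} g(F(\ox))$ with $v=\nabla F(\ox)^{*}\zeta$ and $\|\zeta\|$ controlled by $\kappa$, the Lipschitz constant of $g$, and $\|v\|$), whereas your argument uses MSQC only through \eqref{fcalc1} and never again. Incidentally, the Dini--Hadamard regularity conclusion does survive your reasoning: since $\sigma_{\cl K}=\sigma_K$, one gets $\d f(\ox)=\sigma_{\hat{\sub} f(\ox)}$ without knowing that $K$ is closed; the gap concerns only the exact equality \eqref{schian}.
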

\begin{proof} The subderivatiive chain rule \eqref{fcalc1} recently was established in \cite[Theorem~3.4]{mf}. For the proof of the subdifferential chain rule see \cite[Theorem~3.6]{mms1}.
\
\end{proof}
In Theorem \ref{fcalc}, if $g$ is semi-differentiable  at $F(\ox)$, then subderivative chain rule \eqref{fcalc1} holds without assuming Lipschitz continuity. The proof in the latter case is straightforward, just like the chain rule for differentiable functions. Therefore, we present the following result without proof; see \cite[Proposition~3.6]{mf} . 
\begin{Proposition}[\bf chain rule for semi-differentiable functions]\label{semicalc}
	Let $f: = g \circ F $ where $F:\R^n \to \R^m $ is semi-differentiable at $\ox $  and $g: \R^m \to \R^k$ is semi-differentiable at $F(\ox)$. Then,  \eqref{fcalc1} holds and  $f$ is semi-differentiable at $\ox$. 
\end{Proposition}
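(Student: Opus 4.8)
The plan is to argue directly from the limit definition \eqref{semiderivatives}, mimicking the classical differentiable chain rule but keeping careful track of the \emph{perturbed} directions that semi-differentiability permits. Fix $w\in\R^n$ and abbreviate $y_0:=F(\ox)$ and $u:=\d F(\ox)(w)$. For $t>0$ and $w'$ near $w$, introduce the effective inner direction
\begin{equation*}
u'_{t,w'}:=\frac{F(\ox+tw')-F(\ox)}{t},\qquad\text{so that}\qquad F(\ox+tw')=y_0+t\,u'_{t,w'}.
\end{equation*}
Then the difference quotient of $f=g\circ F$ rewrites as
\begin{equation*}
\frac{f(\ox+tw')-f(\ox)}{t}=\frac{g(y_0+t\,u'_{t,w'})-g(y_0)}{t},
\end{equation*}
which is exactly a difference quotient for $g$ at $y_0$, but taken in the \emph{varying} direction $u'_{t,w'}$ rather than a fixed one.

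First I would record the two inputs. By semi-differentiability of $F$ at $\ox$, the map $(t,w')\mapsto u'_{t,w'}$ converges to $u=\d F(\ox)(w)$ as $t\dn 0$ and $w'\to w$. By semi-differentiability of $g$ at $y_0$, the quotient $(g(y_0+su')-g(y_0))/s$ converges to $\d g(y_0)(u)$ as $s\dn 0$ and $u'\to u$ jointly. The heart of the proof is to feed the first convergence into the second through a single $\varepsilon$--$\delta$ estimate: given $\varepsilon>0$, semi-differentiability of $g$ supplies $\delta_1>0$ with $|(g(y_0+su')-g(y_0))/s-\d g(y_0)(u)|<\varepsilon$ whenever $0<s<\delta_1$ and $\|u'-u\|<\delta_1$, and semi-differentiability of $F$ then supplies $\delta_2>0$ forcing $\|u'_{t,w'}-u\|<\delta_1$ whenever $0<t<\delta_2$ and $\|w'-w\|<\delta_2$. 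Setting $s=t$ and $u'=u'_{t,w'}$ for $0<t<\min\{\delta_1,\delta_2\}$ and $\|w'-w\|<\min\{\delta_1,\delta_2\}$, and using $g(y_0+t\,u'_{t,w'})=f(\ox+tw')$, gives
\begin{equation*}
\lim_{\substack{t\dn 0\\ w'\to w}}\frac{f(\ox+tw')-f(\ox)}{t}=\d g(F(\ox))(\d F(\ox)w),
\end{equation*}
which is precisely \eqref{fcalc1}.

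Finally, the limit exists for every $w\in\R^n$, so by definition $f$ is semi-differentiable at $\ox$; its semi-derivative $w\mapsto\d g(F(\ox))(\d F(\ox)w)$ is moreover continuous and positively homogeneous of degree one, being the composition of two such maps, in agreement with the characterization \eqref{semidiff-ex}.

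I expect the only genuine subtlety to be the joint handling of the two limit processes in the middle step: the inner direction $u'_{t,w'}$ is not fixed but drifts with $(t,w')$, so a chain rule is available precisely because semi-differentiability controls the difference quotients uniformly over directions converging to $u$. Mere directional differentiability (the fixed-direction limit \eqref{dderivative}) would not suffice here, as the failure of the classical chain rule recorded in Remark \ref{subrel} shows. It is worth noting that the direct limit argument needs no Lipschitz hypothesis; this is in contrast to an approach via the expansion \eqref{semidiff-ex}, which would additionally require the (automatic) calmness of $g$ together with uniform continuity of $\d g(F(\ox))$ on compacta in order to absorb the remainder terms.
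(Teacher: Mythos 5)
Your proof is correct, and it is exactly the ``straightforward, just like the chain rule for differentiable functions'' argument that the paper deliberately omits (deferring to \cite[Proposition~3.6]{mf}): rewriting the difference quotient of $f$ as a difference quotient of $g$ in the drifting direction $u'_{t,w'}$ and chaining the two joint limits by an $\varepsilon$--$\delta$ estimate is precisely where the hypothesis of semi-differentiability (as opposed to mere directional differentiability) is used. Your closing remarks on positive homogeneity and continuity of the composite semi-derivative, and on why Lipschitz continuity is not needed, are also consistent with the paper's surrounding discussion.
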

The chain rule \eqref{fcalc1} will lead us to the calculation of tangent vectors of constraint sets, which is a key to the first-order necessary optimality conditions for a constrained optimization problem. Next, we show an important feature of metric subregularity constraint qualification that is the metric subregularity constraint qualification imposed on a set is preserved by the tangent cone to the set.   
\begin{Corollary}[\bf preserveness of metric subregularity by tangent cones]\label{calctan} Let $\ox \in \O:= \{x \big|\; F(x) \in X \}$ where $F: \R^n \to \R^m$ is semi-differentiable at $\ox$ .If the metric subregularity constraint qualification \eqref{msqccons} holds at $\ox$, then
		\begin{equation}\label{calctan1}
			T_{\O}(\ox) = \{ w \in \R^n \big| \; \d F(\ox)(w) \in T_{X} (F(\ox))   \}
		\end{equation} 
		Moreover, the metric subregularity constraint qualification holds (globally) at $w=0$ for the constraint set $T_{\O}(\ox)$ in \eqref{calctan1} with the same constant $\kappa >0$, i.e., for all $w \in \R^n$ we have
		$$        \dist \big( w \; ; \; T_{\O}(\ox) \big) \leq \kappa \; \dist \big( \d F(\ox)(w) \; ; \; T_{X} (F(\ox))  \big)     $$
\end{Corollary}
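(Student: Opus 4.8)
The plan is to obtain the tangent-cone formula \eqref{calctan1} as a direct instance of the subderivative chain rule, and then to upgrade it to the quantitative estimate by a scaling argument in which the approximating sequence is chosen adapted to a projection. For \eqref{calctan1} I would apply Theorem \ref{fcalc} to the composite $\delta_{\O}=\delta_{X}\circ F$: indeed $\O=\{x\mid F(x)\in X\}$ gives $\delta_{\O}=\delta_{X}\circ F$, the outer function $g=\delta_{X}$ is (trivially, being constant) Lipschitz continuous relative to $\dom g=X$, the inner map $F$ is semi-differentiable at $\ox$ by hypothesis, and the MSQC \eqref{msqc} for $\delta_{X}\circ F$ is exactly the constraint qualification \eqref{msqccons}. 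Then \eqref{fcalc1} yields $\d\delta_{\O}(\ox)(w)=\d\delta_{X}(F(\ox))(\d F(\ox)w)$ for all $w$. Using the standard fact that the subderivative of an indicator is the indicator of the tangent cone, i.e. $\d\delta_{\O}(\ox)=\delta_{T_{\O}(\ox)}$ and $\d\delta_{X}(F(\ox))=\delta_{T_{X}(F(\ox))}$, both sides are $\{0,\infty\}$-valued, and equating their zero sets gives $w\in T_{\O}(\ox)\iff\d F(\ox)w\in T_{X}(F(\ox))$, which is \eqref{calctan1}.

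For the estimate, write $\oy:=F(\ox)\in X$, fix $w$, and set $d:=\dist(\d F(\ox)w;T_{X}(\oy))$. The naive route is to apply \eqref{msqccons} at $\ox+tw$, divide by $t$, and let $t\dn 0$: the scaling identity $\tfrac1t\dist(\ox+tw;\O)=\dist(w;\tfrac{\O-\ox}{t})$ turns the left side into $\dist(w;\tfrac{\O-\ox}{t})$, and semi-differentiability turns the right side into $\dist(\d F(\ox)w;\tfrac{X-\oy}{t})$ up to an $o(1)$ term. The obstacle is that $T_{X}(\oy)$ is only the outer limit $\Limsup_{t\dn 0}\tfrac{X-\oy}{t}$, so one always has $\liminf_{t\dn 0}\dist(\d F(\ox)w;\tfrac{X-\oy}{t})\ge\dist(\d F(\ox)w;T_{X}(\oy))$; the inequality points the wrong way, and without a geometric-derivability assumption one cannot pass to the limit on the right. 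This mismatch is the main difficulty.

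I would resolve it by choosing the approximating sequence to track the projection rather than an arbitrary $t\dn 0$. Let $\ov:=\proj_{T_{X}(\oy)}(\d F(\ox)w)$, so $\|\d F(\ox)w-\ov\|=d$; since $\ov\in T_{X}(\oy)=\Limsup_{t\dn 0}\tfrac{X-\oy}{t}$, there exist $t_{k}\dn 0$ and $v_{k}\to\ov$ with $\oy+t_{k}v_{k}\in X$. Bounding $\dist(F(\ox+t_{k}w);X)\le\|F(\ox+t_{k}w)-(\oy+t_{k}v_{k})\|$ and using $F(\ox+t_{k}w)=\oy+t_{k}\d F(\ox)w+o(t_{k})$ gives $\tfrac1{t_{k}}\dist(F(\ox+t_{k}w);X)\le\|\d F(\ox)w-v_{k}\|+o(1)\to d$. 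Applying \eqref{msqccons} at $\ox+t_{k}w$ (valid for large $k$) together with the scaling identity yields $\limsup_{k}\dist(w;\tfrac{\O-\ox}{t_{k}})\le\kappa d$. Finally I would take near-projections $w_{k}\in\tfrac{\O-\ox}{t_{k}}$ of $w$, extract a convergent subsequence $w_{k}\to\ow$ (possible since $\|w-w_{k}\|$ is bounded by $\kappa d+o(1)$), and note $\ow\in\Limsup_{t\dn 0}\tfrac{\O-\ox}{t}=T_{\O}(\ox)$ with $\|w-\ow\|\le\kappa d$, giving $\dist(w;T_{\O}(\ox))\le\kappa\,\dist(\d F(\ox)w;T_{X}(\oy))$. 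The crux is precisely the adapted sequence $t_{k}$, which lets the original subregularity estimate survive the passage to tangent cones with no derivability hypothesis; everything else is routine distance bookkeeping.
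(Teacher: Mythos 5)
Your proof is correct, and part one (deriving \eqref{calctan1} by applying Theorem \ref{fcalc} to $\delta_{X}\circ F$ and using $\d\delta_{C}=\delta_{T_{C}}$) is exactly how the paper obtains it. For the quantitative estimate you reach the same conclusion as the paper, but you route the limit passage differently, and the ``obstacle'' around which you built your detour does not actually exist. The paper does precisely what you call the naive route: apply \eqref{msqccons} at $\ox+tw$, expand $F(\ox+tw)=F(\ox)+t\,\d F(\ox)(w)+o(t)$ by semi-differentiability, rescale, take $\liminf_{t\dn 0}$, and then convert liminfs of distances into distances to outer limits via \cite[Exercise~4.2]{rw}. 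The inequality you claim is unavailable without geometric derivability, namely $\liminf_{t\dn 0}\dist\big(\d F(\ox)(w);\tfrac{X-F(\ox)}{t}\big)\le\dist\big(\d F(\ox)(w);T_{X}(F(\ox))\big)$, in fact holds unconditionally: for any $v\in T_{X}(F(\ox))$ there are $t_{k}\dn 0$ and $v_{k}\to v$ with $F(\ox)+t_{k}v_{k}\in X$, hence $\dist\big(\d F(\ox)(w);\tfrac{X-F(\ox)}{t_{k}}\big)\le\|\d F(\ox)(w)-v_{k}\|\to\|\d F(\ox)(w)-v\|$, and taking the infimum over such $v$ gives the claim; geometric derivability would only be needed to upgrade the $\liminf$ to a full limit, which the argument never requires. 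Your adapted-sequence construction at the projection point $\ov\in\proj_{T_{X}(F(\ox))}(\d F(\ox)w)$ is exactly this elementary argument specialized to the minimizing $v$, and your extraction of near-projections $w_{k}\in\tfrac{\O-\ox}{t_{k}}$ is the standard finite-dimensional compactness proof of the companion inequality $\dist\big(w;T_{\O}(\ox)\big)\le\liminf_{t\dn 0}\dist\big(w;\tfrac{\O-\ox}{t}\big)$ on the $\O$ side. So the two proofs have identical skeletons; what yours buys is self-containedness, never invoking the set-convergence calculus of \cite{rw} but instead re-proving the needed pieces by hand, and the only thing to correct is the framing: the direct route is not blocked.
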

\begin{proof}
	\eqref{calctan1} was proved in \cite[Corollary~3.11]{mf},  which is obtained by applying the subderivative chain rule \ref{fcalc1} on the composite function $\delta_{X} \circ F$ at $\ox$. Turning to the second part, fix $w \in \R^n$.  Since $F$ is semi-differentiable at $\ox$, it is directinally differentiable at $\ox$  in direction $w$.  Hence, $F(\ox + t w) = F(\ox) + t \d F(\ox)(w) + o(t)$. Moreover, for sufficiently small $t >0$,  \eqref{msqccons} yields that
	\begin{eqnarray*}
		\disp  t \; \dist ( w \; ; \;  \frac{\O - \ox}{t} ) &\leq & \kappa \; \dist (F(\ox+tw) \;  ; \; X  ) \\\nonumber
		&=& \kappa \; \dist (F(\ox)+ t \d F(\ox) (w) + o(t) \;  ; \; X  ) \\\nonumber
		&\leq & t\kappa \;  \dist  ( \d F(\ox) (w) \;  ; \;  \frac{ X - F(\ox)}{t}  ) + o(t)
		\disp
	\end{eqnarray*}
	Now by dividing both sides by $t$, we arrive at
	\begin{eqnarray*}
		\disp \liminf_{t \dn 0}  \dist ( w \; ; \;  \frac{\O - \ox}{t} ) &\leq & \kappa \; \liminf_{t \dn 0} \dist  \big( \d F(\ox) (w) \;  ; \;  \frac{ X - F(\ox)}{t}  \big) \\\nonumber
		\disp
	\end{eqnarray*}
	hence, , by \cite[Exercise~4.2]{rw}
	\begin{eqnarray*}
		\disp  \dist ( w \; ; \; \Limsup_{t \dn 0}  \frac{\O - \ox}{t} ) &\leq & \kappa \;  \dist  \big( \d F(\ox) (w) \;  ; \; \Limsup_{t \dn 0} \frac{ X - F(\ox)}{t}  \big) 
		\disp 
	\end{eqnarray*}
where the "$\Limsup$" is understood in the sense of set convergence which yields, by \cite[Proposition]{rw}. According to the equivalent definition of tangent cones by  "$\Limsup$" the proof is completed.   
\end{proof}

\begin{Theorem}[\bf semi-differentiability of the distance function]\label{semidist} Let $X$ be a nonempty closed set. Set $p(x):= \dist (x ; X)$. Then, the subderivative of $p$ is computed by
\begin{equation}\label{subdist}
\d p(x)(w) =\left\{\begin{matrix}
\dist (w \;  ;  \;  T_{X} (x)) & x \in X   \\\\  
   \min \{ \frac{\la x - y \;  , \; w\ra}{p(x)} \big|  \;y \in \proj_{X}(x) \}&   x \notin X 
\end{matrix}\right.
\end{equation}
Furthermore, the following statements hold
\begin{itemize}
\item[(1).] If $x \notin X$ then $p$ is semi-differentiable at $x.$
\item[(2).] if $x \notin X$ then $p$ is differentiable at $x$  if and only if  $\proj_{X} (x)$ is singelton, in this case $$\nabla  p(x) = \frac{1}{\dist (x ;X)} \big( x - \proj_{X} (x) \big).$$
\item[(3).] $X$ is geometrically drivable at $x \in X$ if and only if $p$ is semi-differentiable at $x \in X.$
\item[(4).]  $p$ is semi-differentiable if and only if $X$ is geometrically drivable. 
\item[(5).] If $q = \frac{1}{2} p^2$ then $q$ is semi-differentiable with the semi-derivative
\begin{equation}\label{hsub}
\d q(x)(w)=  \min \{ \la x - y \;  , \; w \ra  \big|  \; y \in \proj_{X}(x) \},
\end{equation}
moreover, for all $x, y \in \R^n$, the following upper estimate holds for $q$
\begin{equation}\label{hdes}
	q(y) \leq q(x) + \d q(x) (y-x) + \frac{1}{2} \| y -x \|^2 .
\end{equation}
\item[(6).] $q=\frac{1}{2} p^2$ is differentiable at $x$ if and only if $\proj_{X}(x)$ is singelton, in this case $$\nabla  q(x) =  x - \proj_{X} (x) .$$

\end{itemize}
\end{Theorem}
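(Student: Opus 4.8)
The cornerstone of the whole statement is item (5), so the plan is to establish the semi-differentiability and the min-formula for $q=\frac12 p^2$ first and then read off the $x\notin X$ assertions about $p$ by a chain rule. For every $x$ the projection $\proj_X(x)$ is a nonempty compact set (since $X$ is closed), and for $u$ near $x$ one may restrict the minimization defining $q$ to a fixed compact piece $K:=X\cap\{y:\|x-y\|\le p(x)+1\}$, so that $q(u)=\min_{y\in K}\frac12\|u-y\|^2$ locally. I will then squeeze the difference quotient along $t\dn 0$, $w'\to w$ from both sides: the upper estimate $q(x+tw')\le\frac12\|x+tw'-y\|^2$ for a fixed minimizer $y\in\proj_X(x)$ of $z\mapsto\la x-z,w\ra$ gives $\limsup\le\la x-y,w\ra=\min_{y\in\proj_X(x)}\la x-y,w\ra$, while choosing $y_t\in\proj_X(x+tw')\cap K$ and using $q(x)\le\frac12\|x-y_t\|^2$ together with the outer semicontinuity of the projection (any cluster point of $y_t$ lies in $\proj_X(x)$ because $\|x+tw'-y_t\|=\dist(x+tw';X)\to p(x)$) gives $\liminf\ge\min_{y\in\proj_X(x)}\la x-y,w\ra$. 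Hence the limit exists and equals the min, which is exactly \eqref{hsub}; for $x\in X$ this degenerates correctly to $0$. The estimate \eqref{hdes} then follows by completing the square: with $z^{*}$ the minimizer in \eqref{hsub} for the direction $y-x$, $q(y)\le\frac12\|y-z^{*}\|^2=q(x)+\d q(x)(y-x)+\frac12\|y-x\|^2$. Item (6) is the differentiability criterion recorded after \eqref{semidiff-ex}: being semi-differentiable, $q$ is differentiable at $x$ iff $\d q(x)(\cdot)$ is linear, and the concave pointwise minimum $w\mapsto\min_{y\in\proj_X(x)}\la x-y,w\ra$ of linear functionals is linear iff $\proj_X(x)$ is a singleton, in which case $\nabla q(x)=x-\proj_X(x)$.

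For $x\notin X$ we have $q(x)>0$, so locally $p=\psi\circ q$ with $\psi(s)=\sqrt{2s}$ smooth on a neighborhood of $q(x)$. Proposition \ref{semicalc} then yields that $p$ is semi-differentiable at $x$ with $\d p(x)(w)=\psi'(q(x))\,\d q(x)(w)=\frac{1}{p(x)}\min_{y\in\proj_X(x)}\la x-y,w\ra$, which is precisely item (1) together with the lower branch of \eqref{subdist}. Item (2) follows either from the same linearity criterion applied to $\d p(x)(\cdot)$, or directly from (6): composition with the local diffeomorphism $\psi$ preserves differentiability, and $\nabla p(x)=\psi'(q(x))\nabla q(x)=\frac{1}{\dist(x;X)}\big(x-\proj_X(x)\big)$, with differentiability equivalent to $\proj_X(x)$ being a singleton.

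It remains to treat $x\in X$, where $p(x)=0$. Here I would rewrite the difference quotient in rescaled coordinates as $\frac{p(x+tw')-p(x)}{t}=\dist\!\big(w';\tfrac{X-x}{t}\big)$ and compute the subderivative (a plain $\liminf$) as a distance to the outer limit. Since distance functions are $1$-Lipschitz in their point argument uniformly over sets, the joint $\liminf$ over $(t,w')$ coincides with $\liminf_{t\dn 0}\dist(w;\tfrac{X-x}{t})$, and a standard near-projection/cluster-point argument identifies this with $\dist\!\big(w;\Limsup_{t\dn 0}\tfrac{X-x}{t}\big)=\dist(w;T_X(x))$, giving the upper branch of \eqref{subdist}. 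This step uses no regularity of $X$.

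The heart of the theorem is the equivalence (3) between semi-differentiability of $p$ at $x\in X$ and geometric derivability of $X$ at $x$, and this is the step I expect to be the main obstacle. Both directions compare the joint $\limsup$ of $\dist(w';\tfrac{X-x}{t})$ with the distance to the inner limit: for $z\in\Liminf_{t\dn 0}\tfrac{X-x}{t}$ one has $\dist(z;\tfrac{X-x}{t})\to 0$, whence $\limsup_{t,w'\to w}\dist(w';\tfrac{X-x}{t})\le\dist(w;\Liminf_{t\dn 0}\tfrac{X-x}{t})$. If $X$ is geometrically derivable at $x$, then $\Liminf=\Limsup=T_X(x)$, so this $\limsup$ is squeezed down to the $\liminf=\dist(w;T_X(x))$ already computed and the semi-derivative exists. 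For the converse, existence of the semi-derivative forces, upon restricting to $w'=w$, the existence of $\lim_{t\dn 0}\dist(w;\tfrac{X-x}{t})$ for every $w$; since an existing limit coincides with the corresponding $\liminf$, this limit is $\dist(w;T_X(x))$ for all $w$. The delicate point is that one cannot conclude with the elementary $\Liminf/\Limsup$ distance inequalities alone, because without the existence of the limit the sets $\tfrac{X-x}{t}$ could oscillate; so I invoke the characterization of set (Painlev\'e--Kuratowski) convergence by pointwise convergence of distance functions from \cite{rw}, namely that $\dist(\cdot;\tfrac{X-x}{t})\to\dist(\cdot;T_X(x))$ pointwise is equivalent to $\tfrac{X-x}{t}\to T_X(x)$, i.e. to $T_X(x)=\Lim_{t\dn 0}\tfrac{X-x}{t}$, which is exactly geometric derivability at $x$. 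Item (4) is then immediate: $p$ is everywhere semi-differentiable iff it is semi-differentiable at every $x\in X$ (off $X$ this is automatic by (1)), which by (3) is precisely geometric derivability of $X$.
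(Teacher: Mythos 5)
Your proof is correct, but it inverts the paper's architecture and replaces two of its citations with self-contained arguments, so it is a genuinely different route. The paper starts from the subderivative formula \eqref{subdist}, which it cites from \cite[Example~8.53]{rw}, proves semi-differentiability of $p$ off $X$ via the one-dimensional function $\eta(t)=\|x+tw-y\|$, and only then obtains (5) through the chain rule $q=g\circ p$, $g(t)=\tfrac12 t^2$, citing \cite[Proposition~4.3]{mf} (Moreau envelopes) for the descent estimate \eqref{hdes}; for the backward implication in (3) it builds a derivable curve $t\mapsto y(t)\in\proj_X(x+tw)$ from near-projections. You instead prove (5) first, uniformly in $x$, by a direct squeeze with near-projections and outer semicontinuity of $\proj_X$, obtain \eqref{hdes} by completing the square with the minimizing $z^{*}\in\proj_X(x)$ (which is clean and correct, since $\tfrac12\|x-z^{*}\|^2=q(x)$), and then recover (1), (2) and the lower branch of \eqref{subdist} through the reverse chain rule $p=\psi\circ q$, $\psi(s)=\sqrt{2s}$, smooth near $q(x)>0$ (a harmless point: Proposition \ref{semicalc} is stated for $g$ defined on all of $\R^m$, so formally you should extend $\psi$ smoothly off a neighborhood of $q(x)$). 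For (3) your forward direction matches the paper's use of the set-convergence/distance-function connection, but your backward direction invokes the Painlev\'e--Kuratowski characterization $\dist(\cdot;C_t)\to\dist(\cdot;C)$ pointwise iff $C_t\to C$, together with the equivalence (stated in Section 2 of the paper) between derivability and $T_X(x)=\Lim_{t\dn 0}\frac{X-x}{t}$, rather than exhibiting a derivable curve for each tangent vector. What each approach buys: yours is more self-contained (both branches of \eqref{subdist} and the estimate \eqref{hdes} are proved rather than cited) and avoids dividing by $p(x)$ by working with the quadratic $q$; the paper's is shorter by citation, and its curve construction in (3) makes the geometric content of derivability explicit instead of routing it through the abstract set-convergence criterion.
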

\begin{proof}
The expression for the subderivative of $p$ in \eqref{subdist} was obtained in \cite[Example~8.53]{rw}.  Fix $ x, w \in \R^n$. Since $p$ is (globally) Lipschitz continuous, proving the semi-differentiability of $p$ at $x$ reduces to the following inequality
$$  \limsup_{t \dn 0} \frac{p(x + t w) - p(x)}{t}  \leq \d p(x)(w).  $$
To prove (1), we suppose $\ox \notin X$. According  \eqref{subdist}, we can find $y \in \proj_{X} (x)$ such that $\d p(x)(w) =  \frac{\la x - y \;  , \; w\ra}{p(x)} $. Define $\eta (t):= \|x + tw - y \|$. The function $\eta: \R \to \R$ is differentiable at $t = 0$, because $x \neq y$. Since $p(x + tw) \leq \| x + tw - y \|$ for all $t \geq 0$, we have 
$$ \frac{p(x + t w) - p(x)}{t} \leq \frac{\eta(t) - \eta(0)}{t} \quad \quad t>0, $$ 
hence
$$ \limsup_{t \dn 0} \frac{p(x + t w) - p(x)}{t} \leq \eta' (0) = \frac{\la  x -  y \;  , \; w\ra}{\| x - y \|} = \d p(x) (w) $$
which proves the semi-differentiability of $p$ at $x$.  To prove (2), we already know $p$ is semi-differentiable by (1), thus the differentiability of $p$ is equivalent to linearity of $\d p(x)(.)$. Notice that $$\d p(x)(w) = \min \{ \frac{\la x - y \;  , \; w\ra}{p(x)} \big|  \;y \in \proj_{X}(x) \} $$ is linear in $w$ if and only if $\proj_{X}(x)$ is singleton.  To prove (3), first we assume  $X$ is geometrically derivable at $x$, thus we have $p(x) =0$ and $$ T_{X}(x) =\Lim_{t \dn 0}\frac{X -  x}{t}.$$ By the connection of limits and distance functions in \cite[Corollary~4.7]{rw},  we can write 
\begin{eqnarray*}
\disp
\d p(x)(w) &=& \dist (w \;  ; \;  T_{X} ( x)) =  \dist \big( w \; ; \; \Lim_{t \dn 0}\frac{X - x}{t} \big) \\\nonumber
&=& \lim_{t \dn 0} \dist \big( w \; ; \; \frac{X -  x}{t} \big) = \lim_{t \dn 0} \frac{p(x + t w) - p(x)}{t}.
\disp
\end{eqnarray*}
which proves the semi-differentiability of $p$ at $ x$. Now we assume $p$ is semi-differentiable at $x \in X.$ We fix $w \in  T_{X}(x)$, thus by \eqref{subdist}, $\d p(x)(w) = \dist (w \;  ; \;  T_{X} (x))  = 0 .$  Since $p$ is semi-differentiable at $x$ we have 
$$    0 = \d p(x)(w) = \lim_{t \dn 0} \frac{p(x + t w) - p(x)}{t}  =  \lim_{t \dn 0} \frac{   \|x + tw - y(t) \| }{t} = 0. $$
where $y(t) \in \proj_{X}(x+tw)$ for each $t >0$. The above reads as $y(t) = x + tw + o(t) $,  hence, $ x + tw + o(t) \in X$ which means $w$ is derivable tangent vector for $X$ at $x$. (4) follows from (1) and (3) by considering two cases $x \in X$ and $x \notin X.$  Turning to (5), to obtain the expression for $\d q(x)$ in \eqref{hsub},  first, we assume $x \in X$. In the latter case, we show that $q$ is differentiable with $\nabla q(x) = 0 $. Indeed, in the latter case,  $\proj_{X}(x) = \{x\}$ forcing the right side of \eqref{hsub} to equal zero. On the other hand, for all $w \in \R^n \setminus \{0\}$, we have 
$$    0 \leq \frac{q(x+ w) - q(x)}{\|w\|}  = \frac{\dist(x+w \; ; \; X)^2}{2 \|w\|} \leq \frac{\|x+w - x\|^2}{2 \|w\|} = \frac{1}{2} \|w\|  $$
which proves $q$ is differentiable at $x$ with $\nabla q(x) = 0$, thus it verifies \eqref{hsub}. Secondly, we assume $x \notin X$, thus $p = \dist(. ; X)$ is semi-differentiable at $x$ by part (1). Hence, the chain rule \eqref{fcalc} can be applied on the composite function $q = g \circ p$ with $g(t) = \frac{1}{2}t^2$. Then, the result immediately follows from \eqref{semidist}.  To prove the estimate \eqref{hdes}, note that, $q$ can be written in the form of moreau envelope of the indicator function of $\delta_{X}$, i.e., $q(x) =  \inf \{  \delta_{X}(y) +  \frac{1}{2} \|x- y\|^2 \big| \; y \in \R^n \} .$ Therefore, \eqref{hdes} immediately follows from \cite[Proposition~4.3]{mf}.  The proof of (6) immediately follows from (5); indeed,   $q$ is semi-differentiable by (5), thus $q$ is differentiable at $x$ if and only if $\d q(x)(.)$ is a linear map in \eqref{hsub} if and only if $\proj_{X} (x)$ is singleton.  
\end{proof}
\begin{Remark}[\bf comments on Theorem \ref{subdist}]{\rm
	In the framework of Theorem \ref{subdist}, since the distance function is Lipschitz continuous, the semi-differentiability of functions $p$ and $q$ is equivalent to the B(ouligand)-differentiability in the sense \cite[Definition~4.1.1]{cp}. As observed in \cite{mf},  the semi-differentiability is the key to designing a first-order descent method. The estimate \eqref{hdes} is called descent property, which was recently introduced in \cite{mf}, is responsible for convergence-guarantee of a generalized gradient descent method for the non-Clarke regular problem. (6) is a known result and can be proved directly. However, in our case, (6) is immediately followed by the semi-differentiability of $h$. Additionally, since $h$ is locally Lipschitz continuous, $q$ is differentiable almost everywhere on $\R^n$, thus (6) proves the well-known result about almost everywhere the uniqueness of the projection points on an arbitrary closed set in $\R^n$. The functions $p= \dist(. \; X)$ and $q=\frac{1}{2} p^2$ serve as penalty terms of the constraint system $F(x) \in X$, in constrained optimization problems. In almost all applications one can think of, the right-hand side constraint $X$ is geometrically drivable, which implies by (3) the semi-differentiability of $p$ and $q$. Additionally, by (6), if $X$ is convex, which is the case in nonlinear conic programming, the square penalty term $q$ is continuously differentiable with a globally Lipschitz continuous gradient. Particularly, in the framework of the nonlinear programming \eqref{nlp}, we have $X = \R^{l}_{-}  \times \{0\}^{m-l}$ thus, by a quick calculation, the penalty terms $p$ and $q$ come out as
	\begin{equation}\label{fpen}
	p(x) =   \big( \sum_{i=1}^{l} \max ^{~ ~~~2} \{x_i , 0\}  +   \sum_{j=l+1}^{m} x_j^2   \big)^{\frac{1}{2}}.
\end{equation} 
\begin{equation}\label{hpen}
			 q(x) = \frac{1}{2} \sum_{i=1}^{l} \max ^{~ ~~~2} \{x_i , 0\}  + \frac{1}{2}  \sum_{j=l+1}^{m} x_j^2  
\end{equation}     

In the framework of the nonlinear programming, as it can be checked directly, the quadratic penalty term $q$ is differentiable with the Lipschitz gradient, but it is not twice differentiable. However, this $q$ will not provide us with an exact penalty. In contrast, $p$ provides an exact penalty with the price of non-differentiability at any point on $X = \R^{l}_{-}  \times \{0\}^{m-l}$.  The square root in \eqref{fpen} may complicate the computations in each algorithm's iterations. Hence, other types of penalty terms are considered. In particular, $\ell_1$- exact penalty, which takes the form  \eqref{epnlp} in the nonlinear programming, is considered. Since \eqref{subdist} is for Euclidean distance, it cannot be directly applied to cover $\ell_1$-exact penalty. Below, we investigate the semi-differentiability of a penalty term associated with the cartesian product of several constraints. In this case we can convert $\ell_1$-penalty in \eqref{epnlp}. 
 }
\end{Remark}
\begin{Corollary}[\bf semi-differentiability of the sparable-sum of distance functions]\label{prosemidist} Let each $X_i \subseteq \R^{m_i}$ be closed set for $i=1,2,...,k$. Each vector $x \in X := X_1 \times X_2 ... \times X_k$ can be written in the form $x= (x_1 , x_2 ,..., x_k)$  with  each $x^i \in X_i$. For each $x= (x^1 , x^2 ,..., x^k)$, define $p(x) = \sum_{i=1}^{k} \dist (x^i ; X_i)$ and $q(x) = \frac{1}{2} \sum_{i=1}^{k} \dist^2 (x^i ; X_i)$. Then the following assertions hold
\begin{itemize}
	\item[(1).]  $p$ is semi-differentiable at $x \in X$ if and only if each $X_i $ is geometrically drivable at $x^i $.  
	\item[(2).]  $p$ is semi-differentiable if and only if each $X_i $ is geometrically drivable . 
	\item[(3).] I $q$ is semi-differentiable with the semi-derivative
	\begin{equation*}
		\d q(x^1, x^2, ..., x^k)(w^1, w^2,...,w^k)= \sum_{i=1}^{k} \min \{ \la x^i - y^i \;  , \; w^i \ra  \big|  \; y^i  \in \proj_{X_i}(x^i) \},
	\end{equation*}
	moreover, for all $x, y \in \R^n$, the following upper estimate holds for $q$
	\begin{equation*}
		q(y) \leq q(x) + \d q(x) (y-x) + \frac{k}{2} \| y -x \|^2 .
	\end{equation*}
	\item[(4).] $q$ is differentiable at $x$ if and only if $\proj_{X_i} (x^i) $ is singleton for each $i=1,2,...,k$, in this case $$\nabla_i  q(x) =  x^i - \proj_{X_i} (x^i)  .$$
\end{itemize}	
\end{Corollary}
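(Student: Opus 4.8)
The plan is to reduce every assertion to the single-factor Theorem~\ref{semidist} applied to each $X_i\subseteq\R^{m_i}$, exploiting that $p$ and $q$ are \emph{separable} over the blocks. Write $n=m_1+\cdots+m_k$, let $\pi_i\colon\R^n\to\R^{m_i}$ be the (linear, hence semi-differentiable) coordinate projection $\pi_i(x)=x^i$ with $\d\pi_i(x)=\pi_i$, and set $p_i:=\dist(\,\cdot\,;X_i)$ and $q_i:=\tfrac12 p_i^2$ on $\R^{m_i}$, so that $p=\sum_{i=1}^k p_i\circ\pi_i$ and $q=\sum_{i=1}^k q_i\circ\pi_i$. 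The guiding principle is that semi-differentiability, the semi-derivative formula, and the descent estimate all pass through finite sums of functions that depend on disjoint blocks of variables, so each statement becomes the corresponding statement of Theorem~\ref{semidist} for the factor $X_i$.

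For the ``if'' directions of (1) and (2), I would argue as follows. If each $X_i$ is geometrically derivable at $x^i$ (resp.\ everywhere), then by Theorem~\ref{semidist}(3) (resp.\ (4)) each $p_i$ is semi-differentiable at $x^i$ (resp.\ everywhere); since $\pi_i$ is linear, Proposition~\ref{semicalc} makes each $p_i\circ\pi_i$ semi-differentiable with $\d(p_i\circ\pi_i)(x)(w)=\d p_i(x^i)(w^i)$, and a finite sum of semi-differentiable functions is again semi-differentiable with semi-derivative the sum of the semi-derivatives (the defining limit of a sum is the sum of the limits). This also records the formula $\d p(x)(w)=\sum_i \d p_i(x^i)(w^i)$.

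For the converse directions I would extract semi-differentiability of a single factor from that of $p$. Fix $i$ and a direction $v\in\R^{m_i}$, and for sequences $t_\nu\dn 0$, $v_\nu\to v$ form $w_\nu\in\R^n$ with $(w_\nu)^i=v_\nu$ and $(w_\nu)^j=0$ for $j\ne i$; then $w_\nu\to w$, where $w$ is the block-$i$ embedding of $v$. Because the off-block perturbations are \emph{exactly} zero, the off-block difference quotients vanish identically, so $\frac{p(x+t_\nu w_\nu)-p(x)}{t_\nu}=\frac{p_i(x^i+t_\nu v_\nu)-p_i(x^i)}{t_\nu}$. Semi-differentiability of $p$ at $x$ forces the left side to converge, to $\d p(x)(w)$, for every such sequence, which is precisely semi-differentiability of $p_i$ at $x^i$ in direction $v$. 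Since $x\in X$ gives $x^i\in X_i$, Theorem~\ref{semidist}(3) then yields geometric derivability of $X_i$ at $x^i$, proving (1); part (2) follows by running the same extraction at points whose $i$-th block ranges over all of $X_i$.

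Finally, for (3) and (4) I would invoke Theorem~\ref{semidist}(5)--(6) blockwise. Each $q_i$ is semi-differentiable on all of $\R^{m_i}$ \emph{without} any derivability hypothesis, with $\d q_i(x^i)(w^i)=\min\{\la x^i-y^i,w^i\ra\mid y^i\in\proj_{X_i}(x^i)\}$; summing as above gives the stated semi-derivative of $q$. Summing the per-block descent estimates of Theorem~\ref{semidist}(5) and bounding $\|y^i-x^i\|\le\|y-x\|$ for each $i$ yields $q(y)\le q(x)+\d q(x)(y-x)+\frac k2\|y-x\|^2$ (in fact the identity $\sum_i\|y^i-x^i\|^2=\|y-x\|^2$ gives the sharper constant $\tfrac12$). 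For (4), since $q$ is semi-differentiable, it is differentiable at $x$ iff $\d q(x)(\cdot)$ is linear; as $\d q(x)(w)=\sum_i\d q_i(x^i)(w^i)$ is a sum of positively homogeneous functions of disjoint blocks (each vanishing at the origin), it is linear iff each $\d q_i(x^i)(\cdot)$ is, which by Theorem~\ref{semidist}(6) happens exactly when each $\proj_{X_i}(x^i)$ is a singleton, giving $\nabla_i q(x)=x^i-\proj_{X_i}(x^i)$. The only genuinely non-bookkeeping step is the converse extraction above; everything else is the separability of the semi-derivative together with Theorem~\ref{semidist}.
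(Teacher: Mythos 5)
Your proof is correct and takes essentially the same route as the paper's (very terse) proof: separability reduces every assertion to Theorem~\ref{semidist} applied blockwise, combined with the fact that finite sums of semi-differentiable functions are semi-differentiable. Your block-extraction argument for the converse directions of (1)--(2) fills in a detail the paper leaves implicit (the paper instead invokes derivability of the product set being equivalent to blockwise derivability), and your observation that the descent estimate actually holds with the sharper constant $\tfrac{1}{2}$ in place of $\tfrac{k}{2}$, via $\sum_i \|y^i - x^i\|^2 = \|y-x\|^2$, is a correct refinement of the stated bound.
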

\begin{proof}
	The proofs of the above assertions are similar to the ones in the Theorem \ref{semidist} taking into account that the sum of the semi-differentiable functions is semi-differentiable, and $X := X_1 \times X_2 ... \times X_k$ is geometrically derivable at $x \in X$ if and only if each $X_i$ is geometrically derivable at $x^i \in X_i$ in the point of the question.
\end{proof}
\begin{Remark}[\bf comments on Corollary \ref{prosemidist}]{\rm Corollary  \ref{prosemidist} will let us penalized mixed consstraint sets. e.g., thec constraint set  $F_1(x) \in X_1$ and $F_2(x) \in X_2$ where $X_1 := \R_{-}^m$ and $X_2 := \S_{-}^{n}$ where $\S_{-}^{n}$ is set of negative semi-definite matrices.  Moreover, the penalty term $p(x) = \sum_{i=1}^{k} \dist (x^i ; X_i)$ can cover $\ell_1$ exact penalixation in nonlinear programming \eqref{epnlp}. Indeed, in the framework of nonlinear programming \eqref{epnlp} we could write $X := (-\infty , 0]  \times ...\times (- \infty , 0] \times \{0\} \times ...  \times \{0\}. $ Then, the penalty term $p$ in Corollary  \ref{prosemidist}  comes out as
\begin{equation*}\label{l1nlpl}
		 p(x) =  \sum_{i=1}^{l} \max  \{x_i , 0\}  +  \sum_{j=l+1}^{m} | x_j |.  
\end{equation*}		
Evidently, Theorem \ref{semidist} can be seen as a special case of Corollary \ref{prosemidist} when $k=1$, but not vice the versa. Indeed, setting $X:= X_1 \times X_2 ... \times X_k$, we have $$\dist (x ; X) = \big( \dist^2 (x^1 ; X_1) + \dist^2 (x^2 ; X_2) +...+\dist^2 (x^k ; X_k) \big)^{\frac{1}{2}} $$ 
which is different from the penalty term $p$ in Corollary \ref{prosemidist}.  The converse could be true if we had Theorem \ref{semidist}  for a general norm. Unfortunately,  \eqref{subdist} only works for Euclidean distance. The properties of the Euclidean norm plays important role in the proof of \eqref{subdist}.  Moreover, the proof of the differentiability results in (2) and (6) depends on the differentiability of the norm of the space. In the following Theorem, we establish the semi-differentiability of the distance function for the space equipped with an arbitrary Fr\'echet smooth norm. Recall that a norm $\| .\|_{*}$ is called  Fr\'echet smooth norm on $\R^n$ if $\|.\|_{*}$ is Fr\'echet differentiable on $\R^n \setminus \{ 0\}$.
}
\end{Remark}

\begin{Theorem}[\bf semi-differentiability of the distance functions defined by a Fr\'echet smooth norm]\label{semidist2} 
Let $\| .\|_{*}$ be a Fr\'echet smooth norm on $\R^n$. Denote $ \xi_{*}(x)$ the gradient of $\|.\|_{*}$ at  $x \in \R^n \setminus \{ 0\}$. Let  $X$ be a closed set in $\R^n$, and  let $\dist_{*} (x ; X)$ and $\proj_{X}^{*} (x)$ be the distance of the point $x \in \R^n$ to the set $X$ and the projection of $x$ on the set $X$ with respect to the norm $\|.\|_{*}$ respectively. Then, the subderivative of the function $p(x) : =\dist_{*} (x ; X)$ is calculated by 
\begin{equation}\label{subdist2}
	\d p(x)(w) =\left\{\begin{matrix}
		\dist_{*} (w \;  ;  \;  T_{X} (x)) & x \in X   \\\\  
		 \min \{ \la \xi (x - y) ,w \ra   \big| \;    y \in \proj_{X}^{*}(x) \}&   x \notin X 
	\end{matrix}\right.
\end{equation}
Moreover, the following assertions hold.
\begin{itemize}
	\item[(1)] If $x \notin X$ then $p$ is semi-differentiable at $x.$
	\item[(2)] if $x \notin X$ then $p$ is differentiable at $x$  if and only if  $\xi \big( x - \proj_{X}^{*} (x) \big)$ is singleton, in this case $$\nabla  p(x) = \xi \big( x - \proj_{X}^{*} (x) \big).$$
	\item[(3)] If $X$ is geometrically drivable at $x \in X$ if and only if $p$ is semi-differentiable at $x \in X.$
	\item[(4)]  $p$ is semi-differentiable if and only if $X$ is geometrically drivable. 
\end{itemize}
\end{Theorem}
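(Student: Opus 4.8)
The plan is to mirror the architecture of the proof of Theorem \ref{semidist}, replacing each step that used the Euclidean structure by one valid for an arbitrary Fr\'echet smooth norm. The recurring simplification is that $p=\dist_*(\cdot;X)$ is globally Lipschitz continuous (constant $1$ relative to $\|\cdot\|_*$, hence Lipschitz for the Euclidean metric by equivalence of norms). Consequently, in every subderivative computation the perturbation $w'\to w$ in \eqref{semiderivatives} may be absorbed: it suffices to analyze the one-sided quotient $\tfrac{p(x+tw)-p(x)}{t}$ for fixed $w$, and existence of its limit will automatically upgrade to semi-differentiability, since for Lipschitz functions directional differentiability in every direction coincides with semi-differentiability.

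First I would prove the top branch of \eqref{subdist2}. For $x\in X$ we have $p(x)=0$, and positive homogeneity of the norm gives the exact identity $\tfrac{p(x+tw)}{t}=\dist_*\!\big(w;\tfrac{X-x}{t}\big)$. Writing the tangent cone as the outer set-limit $T_X(x)=\Limsup_{t\dn0}\tfrac{X-x}{t}$, a standard extraction argument (take near-projections, use compactness of bounded sets, and closedness of $T_X(x)$) yields $\liminf_{t\dn0}\dist_*\!\big(w;\tfrac{X-x}{t}\big)=\dist_*(w;T_X(x))$, which is entirely norm-free. This gives $\d p(x)(w)=\dist_*(w;T_X(x))$.

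The substantive case is $x\notin X$, where I would establish the lower branch of \eqref{subdist2} together with item (1) at once. For the upper bound, fix $y\in\proj_X^*(x)$; since $x-y\neq 0$ and $\|\cdot\|_*$ is Fr\'echet differentiable there, the scalar map $t\mapsto\|x+tw-y\|_*$ has derivative $\la\xi(x-y),w\ra$ at $t=0$, and $p(x+tw)\le\|x+tw-y\|_*$ yields $\limsup_{t\dn0}\tfrac{p(x+tw)-p(x)}{t}\le\la\xi(x-y),w\ra$; minimizing over $y$ bounds the limsup by the right-hand side of \eqref{subdist2}. For the matching lower bound --- the crux --- I would take a quotient-achieving sequence $t_k\dn0$, $w_k\to w$, set $y_k\in\proj_X^*(x+t_kw_k)$, and invoke the gradient inequality of the convex norm at $u=x-y_k\neq 0$, namely $\|u+t_kw_k\|_*\ge\|u\|_*+t_k\la\xi(x-y_k),w_k\ra$, which together with $p(x)\le\|x-y_k\|_*$ gives $\tfrac{p(x+t_kw_k)-p(x)}{t_k}\ge\la\xi(x-y_k),w_k\ra$. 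Local boundedness and outer semicontinuity of the metric projection (from continuity of $\dist_*(\cdot;X)$ and closedness of $X$) let me pass to a subsequence with $y_k\to\bar y\in\proj_X^*(x)$, and continuity of $\xi$ on $\R^n\setminus\{0\}$ (automatic for a differentiable convex function) gives $\liminf\ge\la\xi(x-\bar y),w\ra\ge\min_{y}\la\xi(x-y),w\ra$. The two bounds coincide, so the limit exists, proving both \eqref{subdist2} and (1). The main obstacle is exactly this control of the perturbed projections $y_k$; it replaces the clean differentiation of $\eta(t)=\|x+tw-y\|$ that was available in the Euclidean proof of Theorem \ref{semidist}.

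The remaining items follow formally. For (2), since $p$ is semi-differentiable, it is differentiable iff $\d p(x)(\cdot)$ is linear; as $\d p(x)(w)=\min_{y\in\proj_X^*(x)}\la\xi(x-y),w\ra$ is a concave positively homogeneous minimum of linear forms, linearity holds iff all functionals $\la\xi(x-y),\cdot\ra$ agree, i.e.\ iff $\xi\big(x-\proj_X^*(x)\big)$ is a singleton, whose unique element is then $\nabla p(x)$ (note this is weaker than uniqueness of the projection, since $\xi$ need not be injective). For (3), if $X$ is derivable at $x\in X$ then $T_X(x)=\Lim_{t\dn0}\tfrac{X-x}{t}$ is a genuine set-limit, so continuity of $\dist_*(w;\cdot)$ under set-convergence (the analogue of \cite[Corollary~4.7]{rw}) turns the top branch of \eqref{subdist2} into $\lim_{t\dn0}\tfrac{p(x+tw)}{t}$, giving semi-differentiability; conversely, if $p$ is semi-differentiable at $x$, then for any $w\in T_X(x)$ we obtain $0=\d p(x)(w)=\lim_{t\dn0}\tfrac{\|x+tw-y(t)\|_*}{t}$ with $y(t)\in\proj_X^*(x+tw)$, forcing $y(t)=x+tw+o(t)\in X$ and hence $w$ derivable. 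Finally (4) is just the combination of (1) and (3) over the two cases $x\notin X$ and $x\in X$.
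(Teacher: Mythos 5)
Your proof is correct, and its overall architecture matches the paper's: upper estimate at $x\notin X$ by differentiating $t\mapsto\|x+tw-y\|_*$ at a fixed projection $y$, a matching lower estimate by controlling projections of the perturbed points, and essentially the same arguments for (2)--(4). The genuine difference is the key lemma in the crux step (the lower bound when $x\notin X$): the paper applies the classical mean value theorem to $\tau\mapsto\|x+\tau w-y(t_n)\|_*$ on $(0,t_n)$, producing intermediate points $\tau_n$, and then identifies the limit of $\xi(x+\tau_n w-y(t_n))$ through outer semicontinuity of the convex subdifferential; you instead invoke the convex gradient inequality at $u=x-y_k$, namely $\|u+t_kw_k\|_*\ge\|u\|_*+t_k\la\xi(x-y_k),w_k\ra$, which gives $\tfrac{p(x+t_kw_k)-p(x)}{t_k}\ge\la\xi(x-y_k),w_k\ra$ in one line, after which you only need continuity of $\xi$ on $\R^n\setminus\{0\}$ (automatic for a differentiable convex function, and the same underlying fact as the paper's osc argument). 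This buys a shorter argument with no intermediate points, and it handles the perturbation $w_k\to w$ explicitly rather than through the Lipschitz reduction. A further point in your favor: in (2) you prove exactly the stated criterion --- the minimum of the linear forms $\la\xi(x-y),\cdot\ra$ over $y\in\proj_{X}^{*}(x)$ is linear iff the image set $\xi\big(x-\proj_{X}^{*}(x)\big)$ is a singleton --- and you rightly note this is weaker than uniqueness of the projection, since $\xi$ is positively homogeneous of degree zero and need not be injective on the unit sphere (e.g.\ smooth norms whose sphere contains flat faces); the paper's own proof text slips at this point and asserts equivalence with $\proj_{X}^{*}(x)$ being a singleton, which is the Euclidean criterion of Theorem \ref{semidist}, not the general one in the statement. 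Finally, in the forward direction of (3) you argue via set convergence of $\tfrac{X-x}{t}$ and convergence of the corresponding distance functions (the route taken in the proof of Theorem \ref{semidist}), whereas the paper's proof of Theorem \ref{semidist2} uses the derivable-path estimate directly; both are valid.
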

\begin{proof}
First, we establish the subderivative relationship for $p$ in \eqref{subdist2} when $x \notin X$. Before continuing futher, note that since norms are equivalent in finite-dimensions, for each $x \in \R^n$ the function $y \to \|x -y\|_{*}$ is coercive and  Lipschitz continuous, hence, $\proj_{X}^{*} (x)$ is a nonempty compact set in $\R^n$ for all $x \in \R^n $, also we have 
$$ \d p(x)(w) = \liminf_{t \dn 0}  \frac{p(x +tw) -p(x)}{t}   \quad \quad \mbox{for all} ~ w \in \R^n .$$ 
Therefore, the semi-differentiability of $p$ amounts to the following inequality 
\begin{equation}\label{semidiffcriteria}
	            \limsup_{t \dn 0}  \frac{p(x +tw) -p(x)}{t}   \leq    \d p(x) (w)    \quad \quad \mbox{for all} ~ w \in \R^n .
\end{equation} 
First, we establish \eqref{subdist2} for $x \notin X$, which its proof yields assertions (1) and (2) immediately.  Let $x \notin X$. Take an arbitrary $y \in \proj_{X}^{*}(x),  w \in \R^n ,$ and $t >0$.  In this case,  we have $p(x) = \|x -y\|_{*}$ and also  $p(x+tw) \leq \|x + t w -y\|_{*} $. Then, the univariate function $\eta: \R \to \R$ defined by $\eta (t) :=  \|x + t w -y\|_{*} $ is differentiable at $t =0$ as $x \neq y$, and we have  $p(x+tw) \leq \eta(t)$ for all $t \in [0 , \infty)$. Therefore, 
$$      \limsup_{t \dn 0}  \frac{p(x +tw) -p(x)}{t}   \leq   \eta'(0) = \la \xi (x - y) ,w \ra . $$
Because  $y \in \proj_{X}^{*}(x)$ was chosen arbitrary,  we established 
\begin{equation}\label{sup0}
	\d p(x)(w) \leq   \limsup_{t \dn 0}  \frac{p(x +tw) -p(x)}{t}   \leq \min \{ \la \xi (x - y) ,w \ra   \big| \;    y \in \proj_{X}^{*}(x) \}. 
\end{equation} 
To prove the other side of the above inequality, for each $t \in [0 , \infty)$, take an arbitrary $y=y(t) \in \proj_{X}^{*}(x + t w)$.  Note that the function $y: [0 , 1] \to \R^n$ is bounded because 
$$  \|y(t)\|_{*} -\|x + t w \|_{*}  \leq \|x + t w -y(t)\|_{*} \leq  \|x + t w -y(0)\|_{*} .$$
Take any sequence $t_n \dn 0$ as $n \to \infty$ such that  
\begin{equation}\label{sup1}
	\d p(x)(w)= \lim_{n \dn \infty}  \frac{p(x +t_n w) -p(x)}{t_n} \in \R. 
\end{equation} 
By passing to a subsequence, if necessary, we can assume $y(t_n) \to y$ for some $y \in \R^n$. Observe that $y \in \proj_{X}^{*}(x)$ because $p$ is continuous and we have $p(x+t_n w) =  \|x + t_n w -y(t_n)\|_{*} $ for all $n \in \N$. Since $x \notin X$, without loss of generality, we can assume  $x + \tau_n w \neq y(t_n)$ for all $n \in \N$. Moreover, according to the choice of $y(t)$, we have  $p(x) \geq \|x -y(t_n)\|_{*}$ for all $n \in \N$, thus,

\begin{equation}\label{sup1}
	\frac{p(x +t_n w) -p(x)}{t_n} \geq   \frac{ \|x + t_n w -y(t_n)\|_{*} -  \|x -y(t_n)\|_{*}}{t_n} .
\end{equation} 
Fix $n \in \N$, and define the univariate function $\eta :[0 , t_n] \to \R$ with $ \eta (\tau) = \|x + \tau w -y(t_n)\|_{*}  $.  For all $\tau \in [0, t_n]$ one has  $$  0 < \|x + t_n w -y(t_n)\|_{*}  < \|x + \tau w -y(t_n)\|_{*} $$                      
which yields the differentiability of $\eta$ on $(0 , t_n)$. Hence, by the classical mean-value Theorem, there exists $\tau_n \in (0 , t_n)$ such that 
$$ \eta' (\tau_n) = \frac{\eta(t_n)  -  \eta(0)}{t_n }   =  \frac{ \|x + t_n w -y(t_n)\|_{*} -  \|x -y(t_n)\|_{*}}{t_n} . $$
Hence,  by combining \eqref{sup1} with the above, we get 
\begin{equation}\label{sup2}
	\eta' (\tau_n) =  \la  \xi(x + \tau_n w - y(t_n)) \; , \; w \ra  \leq  \frac{p(x +t_n w) -p(x)}{t_n}. 
\end{equation} 
Note that the sequence $ \xi(x + \tau_n w - y(t_n))$ is bounded because the function $x \to \|x\|_{*}$ is globally Lipschitz continuous. By passing to a convergent subsequence, if necessary, we can assume  $ \xi(x + \tau_n w - y(t_n)) \to z $ as $n \to \infty$. Moreover, from the outer semi-continuity of the convex subdifferential we get that $z \in \sub \| . \|_{*} (x-y)$ as $x + \tau_n w - y(t_n)  \to x - y$, when $n \to \infty.$ Furthermore,    $\|.\|_{*}$ is differentiable at $x - y$, hence,  $z = \xi(x - y) $, and we get from \eqref{sup2} that
$$    \min \{ \la \xi (x - y) ,w \ra   \big| \;    y \in \proj_{X}^{*}(x) \}  \leq  \la  \xi(x - y) , w   \ra \leq  \d p(x) (w)$$
which proves \eqref{subdist2} for $x \notin X,$ as well as \eqref{semidiffcriteria}, thus we proved assertion (1) simultaneously. To prove the assertion (2), we know the semi-differentiability of $f$ at $x \notin X$, thus, $p$ is differentiable at $x \notin X$ if and only if 
$$     \d p(x)(w)  = \min \{ \la \xi (x - y) ,w \ra   \big| \;    y \in \proj_{X}^{*}(x) \}  $$ 
is linear in $w$ if and only if $\proj_{X}^{*}(x) $ is singleton. Turning to the proof of \eqref{subdist2} when $x \in  X$. Fix $w \in \R^n$, and $u \in T_{X}(x)$. Hence, $p(x) = 0$, and we can sequence $t_n \dn 0$ such that $x +t_n u + o(t_n) \in X$ for all $n \in \N.$  We have
\begin{eqnarray}\label{sub3}
	\disp  
	\d p(x)(w)  &\leq&   \liminf_{n \to \infty}\frac{f(x +t_n w) }{t_n} \\\nonumber
	&\leq&  \liminf_{n \to \infty}\frac{   \|x + t_n w -(x+t_n u + o(t_n)) \|_{*} }{t_n} \\\nonumber
	&\leq &  \lim_{n \to \infty}\frac{   \| t_n w -  t_n u + o(t_n)) \|_{*} }{t_n} = \|  w -  u \|_{*}. 
	\disp
\end{eqnarray}
$\d p(x)(w) \leq \dist_{*} (w \; ; \; T_{X}(x)). $ To prove the other side of the latter inequality, take any sequence $t_n \dn 0$ with 
\begin{equation}\label{sup3}
	\d p(x)(w)  =   \lim_{n \to \infty}\frac{p(x +t_n w) }{t_n}  = \lim_{n \to \infty}\frac{   \|x + t_n w - y_n \|_{*} }{t_n} \in \R,
\end{equation}
where $y_n$ is any vector in  $\proj_{X}^{*}(x+ t_n w) $. Without loss of generality, we can assume $\{y_n\}$ is a convergent sequence, say to $y$, as we earlier showed that such sequences are bounded. Define the sequence $u_n :=\frac{y_n - x}{t_n} $, and observe from \eqref{sup3} that the sequence $\{u_n\}$ is bounded, by passing to a convergent subsequence, we may assume $u_n \to u $ for some $u \in \R^n$. It is clear ftom definition of the Tangent cone that $u \in T_{X}(x)$, and we have $\d p(x) (w)= \|  w -  u \|_{*}$ by \eqref{sup3}. Therefore, we proved  $\d p(x) (w) \geq \dist_{*} (w \; ; \; T_{X}(x)) $ which finishes the proof of \eqref{subdist2}. To prove (3), first we assume $X$ is geometrically derivable at $x \in X$, thus we have $p(x)=0$. Take an arbitrary $w \in \R^n$ and a $u \in T_{X}(x)$ with $$  \|  w -  u \|_{*} =    \dist_{*} (w \; ; \; T_{X}(x)) =  \d p(x) (w) .$$
Because $X$ is geometrically derivable at $x$, we can write $x + tu + o(t) \in X$, thus the following holds

\begin{eqnarray*}
	\disp  
	  \limsup_{t \dn 0}  \frac{p(x +tw) -p(x)}{t}  \leq \limsup_{t \dn 0} \frac{   \|x + tw -(x+tu + o(t)) \|_{*} }{t}  =  \|  w -  u \|_{*} = \d p(x) (w),  \\\nonumber
	\disp
\end{eqnarray*}
which proves the semi-differentiability by of $p$ at $x$ by \eqref{semidiffcriteria}.  The proof of the other side of assertion (3) is basically identical to the proof of assertion (3) in Theorem \ref{semidist} thus we omit its proof. (4) follows from (1) and (3) by considering two cases $x \in X$ and $x \notin X.$ 
\end{proof}
\begin{Remark}[\bf penalty terms derived from non-Euclidean norms]\label{rne}  {\rm  Theorem \ref{semidist2} generalizes the semi-differentiability of the Euclidean distance function to any distance function derived from Fr\'echet norms, particularly, all $\ell_{\alpha}$ norms with $1 < \alpha <\infty. $  In the case of non-linear programming, i.e., $X = \R^{l}_{-}  \times \{0\}^{m-l}$, the penalty term $p(x)$ comes out as
		\begin{equation*}\label{rne1}
			p(x) =   \big( \sum_{i=1}^{l} \max ^{~ ~~~\alpha} \{x_i , 0\}  +   \sum_{j=l+1}^{m}  |x_j |^{\alpha}   \big)^{\frac{1}{\alpha}}.
		\end{equation*}
		In  the latter setting, $p^{\alpha}(x)$ is twice continuously differentiable for $\alpha \in  (2 , \infty)$, in particular, $\nabla ^2 f^{\alpha}(x)$ is (globally) Lipschitz continuous for $\alpha =3$. Similar to Corollary \ref{prosemidist}, we can generalize Theorem \ref{semidist2} to the product sets $X:= X_1 \times ... \times X_k $ while the norms used in distance functions can be different. In particular, the penalty term $q(x) := \dist^{\alpha}_{\alpha} (x ; X_1) + \dist^{\beta}_{\beta} (x ; X_2)$ where the index $\alpha$ stands for $\ell_{\alpha}$ norm used in defining $\dist_{\alpha}$. In the non-linear programming setting, the penalty term 
		\begin{equation*}\label{rne2}
			q(x) :=    \sum_{i=1}^{l} \max ^{~ ~~~3} \{x_i , 0\}  +   \sum_{j=l+1}^{m}  x_j^2 
		\end{equation*}
		is twice continuously differentiable with the globally Lipschitz continuous hessian. 
}
\end{Remark}

\section{Exact penalization}\sce \label{sec03}
Any optimization algorithm designed for solving unconstrained problems can eventually solve constrained optimization problems by penalization techniques. In numerical methods, exact penalization plays a vital role to incorporate the constraints into objective functions at the price of non-smoothness. The main advantage of exact penalization is that the penalty constant does not need to tend to infinity. In this subsection, we consider the following constrained optimization problem 
\begin{equation}\label{conso}
\mbox{minimize} \;\;  \varphi(x) \quad \mbox{subject to}\;\; F(x)  \in X,
\end{equation}
where $X$ is closed subset of $\R^m$ and $F: \R^n \to \R^m$ is semi-differentiable. The exact penalized problem associated with problem \eqref{conso} is as follows
\begin{equation}\label{pconso}
\mbox{minimize} \;\;  \varphi(x) + \rho \; \dist (F(x) \; ;  \; X) \quad \mbox{subject to}\;\; x  \in \R^n . 
\end{equation}
\begin{Definition}[\bf stationary solutions]\label{sta}
The point $\ox \in \R^n$ is called d(irectional)-stationary solution for the constrained optimization problem \eqref{conso}, if $F(\ox ) \in X$ and for all $w \in \R^n$ with $\d F(\ox)(w) \in T_{X}(F(\ox))$ one has $\d \varphi (\ox)(w) \geq 0 .$
\end{Definition}
Since we do not require differentiability and convexity assumptions for functions $\varphi$, $F$, and the set $X$, problem \eqref{conso} covers a wide range of constrained optimization problems; e.g. training a deep neural network function, optimization problems with complementarity constraints, and bilevel programming; see the section 2 in \cite{mf}. It is well-known that under a constraint qualification, any local minimizer of the constrained problem \eqref{conso} is a local minimizer of the unconstrained problem \eqref{pconso} for some sufficiently large $\rho > 0$. The latter is useful to obtain a necessary optimality condition for the constrained optimization problem \eqref{conso}; see \cite{jzz, mm21, i}. However, in practice, we solve the penalized unconstrained problem \eqref{pconso}. Therefore, it is important to investigate the link between the solutions of the unconstrained problem \eqref{pconso} and the constrained problem \eqref{conso}. Thanks to Theorem \ref{semidist}, all results in this section can be adapted to any norm used to define the distance function in \eqref{pconso}. The next two following theorems concern the latter link between solutions. As always denote $\O$, the set of all feasible points of the constrained optimization problem \eqref{conso}. 
\begin{Theorem}[\bf exact penalty]\label{ep}
Let $ \ox \in \R^n$ and $\varepsilon > 0$ be such that $\varphi$ is Lipschitz continuous on $\B_{\varepsilon}(\ox)$ with a constant $\ell \geq 0$. Furthermore, assume the metric subregularity estimate \eqref{msqccons} holds on  $\B_{\varepsilon}(\ox)$ with a constant $\kappa > 0$. Then, the following assertions hold.
\begin{itemize}
\item[(1).] if $\ox \in \O $ is a minimizer of the constrained optimization problem \eqref{conso} over $\B_{\varepsilon} (\ox)$, then for all $\rho \geq \kappa \ell$,  $\ox$ is a minimizer of the unconstrained problem \eqref{pconso} over $\B_{\frac{\varepsilon}{2}}(\ox).$ \\
\item[(2).] let $\rho > \kappa \ell$, and let $\ox $ be a minimizer of the unconstrained problem \eqref{pconso} over $\B_{\varepsilon}(\ox)$. If $\proj_{\O} (\ox) \cap \B_{\varepsilon}(\ox) \neq \emptyset$, then $\ox \in \O$  is a minimizer of the constrained optimization problem \eqref{conso} over $\B_{\varepsilon}(\ox)$.  
\end{itemize}
\end{Theorem}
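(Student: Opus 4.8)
The plan is to run the standard exact-penalty comparison argument, using the metric subregularity estimate \eqref{msqccons} to dominate the penalty term $\rho\,\dist(F(x);X)$ by the feasibility gap $\dist(x;\O)$, and the Lipschitz continuity of $\varphi$ to compare the values of $\varphi$ at a point and at its projection onto $\O$. Throughout I write $f_\rho(x):=\varphi(x)+\rho\,\dist(F(x);X)$ and record that $f_\rho(u)=\varphi(u)$ whenever $u\in\O$, since then $\dist(F(u);X)=0$.

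For assertion (1), I would fix an arbitrary $x\in\B_{\varepsilon/2}(\ox)$ and pick a Euclidean projection $\ou\in\proj_{\O}(x)$. Because $\ox\in\O$, we have $\|x-\ou\|=\dist(x;\O)\le\|x-\ox\|\le\varepsilon/2$, whence $\|\ou-\ox\|\le\|\ou-x\|+\|x-\ox\|\le\varepsilon$, so that $\ou\in\O\cap\B_{\varepsilon}(\ox)$ and both $x,\ou$ lie in $\B_{\varepsilon}(\ox)$. The minimality of $\ox$ over $\O\cap\B_{\varepsilon}(\ox)$ gives $\varphi(\ou)\ge\varphi(\ox)$, while the Lipschitz bound together with \eqref{msqccons} yields $\varphi(x)\ge\varphi(\ou)-\ell\|x-\ou\|=\varphi(\ou)-\ell\,\dist(x;\O)\ge\varphi(\ou)-\ell\kappa\,\dist(F(x);X)$. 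Adding the penalty term then produces $f_\rho(x)\ge\varphi(\ou)+(\rho-\ell\kappa)\,\dist(F(x);X)\ge\varphi(\ox)=f_\rho(\ox)$, where the last step uses $\rho\ge\kappa\ell$. The only delicate point is keeping $\ou$ inside $\B_{\varepsilon}(\ox)$, which is exactly what forces the conclusion to be stated on the halved ball $\B_{\varepsilon/2}(\ox)$.

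For assertion (2), the first task is to establish feasibility of $\ox$. Choosing $\ou\in\proj_{\O}(\ox)\cap\B_{\varepsilon}(\ox)$, the minimality of $\ox$ for $f_\rho$ over $\B_{\varepsilon}(\ox)$ gives $f_\rho(\ox)\le f_\rho(\ou)=\varphi(\ou)$; combining this with $\varphi(\ou)-\varphi(\ox)\le\ell\|\ou-\ox\|=\ell\,\dist(\ox;\O)\le\ell\kappa\,\dist(F(\ox);X)$ and the definition of $f_\rho$ yields $(\rho-\ell\kappa)\,\dist(F(\ox);X)\le0$. Since $\rho>\kappa\ell$, this forces $\dist(F(\ox);X)=0$, i.e.\ $\ox\in\O$. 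Once feasibility is known, for any $x\in\O\cap\B_{\varepsilon}(\ox)$ we have $f_\rho(x)=\varphi(x)$ and $f_\rho(\ox)=\varphi(\ox)$, so the minimality of $\ox$ for $f_\rho$ immediately gives $\varphi(\ox)\le\varphi(x)$, i.e.\ $\ox$ minimizes \eqref{conso} over $\B_{\varepsilon}(\ox)$.

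The heart of both parts is the single inequality $\dist(x;\O)\le\kappa\,\dist(F(x);X)$ supplied by \eqref{msqccons}: it converts the feasibility penalty into the quantity controlling the variation of $\varphi$, and the threshold $\rho\ge\kappa\ell$ (strict in (2)) is precisely what makes the coefficient $\rho-\ell\kappa$ nonnegative (respectively positive). I expect the main obstacle to be purely the book-keeping with neighborhoods---ensuring the projection point remains in the region where both the Lipschitz estimate and the error bound \eqref{msqccons} are valid---rather than anything analytically deep.
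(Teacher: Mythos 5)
Your proposal is correct, and for assertion (2) it is essentially the argument in the paper: the paper also plugs a point of $\proj_{\O}(\ox)\cap\B_{\varepsilon}(\ox)$ into the minimality inequality, uses the Lipschitz bound and \eqref{msqccons} to get $\rho\,\dist(F(\ox);X)\le\ell\kappa\,\dist(F(\ox);X)$, and concludes feasibility; the only cosmetic difference is that the paper phrases this as a contradiction (assuming $\ox\notin\O$ and cancelling the strictly positive factor $\dist(F(\ox);X)$ to get $\rho\le\kappa\ell$), whereas you derive $(\rho-\kappa\ell)\,\dist(F(\ox);X)\le 0$ directly --- the two are trivially equivalent. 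Where you genuinely add something is assertion (1): the paper does not prove it at all, dismissing it as a well-known exact-penalty result with a citation to Ioffe's book, while you supply the standard projection argument, $f_\rho(x)\ge\varphi(\ou)+(\rho-\kappa\ell)\,\dist(F(x);X)\ge\varphi(\ox)=f_\rho(\ox)$ with $\ou\in\proj_{\O}(x)$. Your bookkeeping there is also the right one: restricting $x$ to $\B_{\varepsilon/2}(\ox)$ is exactly what keeps $\ou$ inside $\B_{\varepsilon}(\ox)$, where both the Lipschitz estimate and \eqref{msqccons} are available, and it explains the halved radius in the statement. (Implicitly you use that $\O$ is closed so that $\proj_{\O}(x)\neq\emptyset$; this is guaranteed since $X$ is closed and $F$, being semi-differentiable, is continuous --- worth one sentence if you write this up.)
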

\begin{proof}
Assertion (1) is a well-know result about exact penalty; see, e.g.,  \cite[page~336]{i}. To prove (2), first, we suppose that $\ox \in \O$ is a minimizer of the unconstrained problem \eqref{pconso} over $\B_{\varepsilon}(\ox)$. Therefore, for all $x \in \B_{\varepsilon}(\ox)$ we have
\begin{equation}\label{ep1}
\varphi (x) + \rho \; \dist (F(x) ;  X) \geq \varphi (\ox) + \rho \; \dist (F(\ox) ; X). 
\end{equation}
If $x \in \O \cap \B_{\epsilon}(\ox)$, then $\dist (F(x) ; X) = \dist ( F(\ox) ; X) =0 $, thus \eqref{ep1} implies $\varphi (x) \geq \varphi (\ox)$ for all $x \in \O \cap \B_{\varepsilon}(\ox)$. To complete the proof of (2), it remains to show that $\ox \in \O.$ If the latter is not true, then $\dist ( \ox , \O) > 0$. Pick any  $x \in \proj_{\O} (\ox) \cap \B_{\varepsilon}(\ox) $, then we have
$$  0 < \dist ( \ox , \O) = \|x - \ox\| \leq \epsilon. $$
Remember that $\varphi$ is Lipschitz continuous on $B_{\varepsilon} (\ox)$,  and $x \in B_{\varepsilon} (\ox)$. Therefore, from inequity \eqref{ep1} we get that 
$$    \rho \; \dist (F(\ox) ; X) \leq \varphi(x) - \varphi(\ox) \leq \ell \; \| x - \ox  \| = \ell  \; \dist ( \ox ; \O).  $$
We also know that $ \dist ( \ox ; \O) \leq \kappa \; \dist (F(\ox) ; X)  $ from metric subregularity estimation. Combining the latter two inequalities we get that $\rho \leq \kappa \ell$ which is a contradiction.    
\end{proof}
\begin{Remark}{\rm
In (2), when $\varepsilon$ is small, the condition $\proj_{\O} (\ox) \cap \B_{\varepsilon}(\ox) \neq \emptyset $ requires $\ox$ to be not too far from the feasible set $\O$. On the other hand, if $\ox$ is a global minimizer of the unconstrained problem \eqref{pconso}, then by choosing $\varepsilon > 0$ sufficiently large the condition $\proj_{\O} (\ox) \cap \B_{\varepsilon}(\ox) \neq \emptyset $ is satisfied. However, finding a global minimizer of the unconstrained problem  \eqref{pconso} can still be a challenging task unless the unconstrained problem is convex. Hence, it is important to investigate the link between problems \eqref{conso} and \eqref{pconso} by their local minimizers and d-stationary points.}
\end{Remark}
\begin{Theorem}[\bf stationary points of the penalized problem]\label{spp} Define $\dist (. ,X)$ by the Euclidean norm. Let the constants $\ell , \varepsilon, \kappa > 0$ be the same as the ones in Theorem \eqref{2.5}, and let $F$ be semi-differentiable at $\ox$. Moreover, in assertions (i) and (ii), we assume there exists a $\kappa' \geq 0$  such that 
\begin{equation}\label{spp1}
\inf_{\substack{
   \|w\| \leq 1  \\
  y \in \proj_{X} (F(\ox))
  }} {\la \d F(\ox)(w) ~ , ~ F(\ox) - y \ra} \leq - \frac{1}{\kappa' } \; \dist (F(\ox)  ;  X)
\end{equation}
Then, the following assertions hold
\begin{itemize}
\item[(1).] if $\ox $ is a local minimizer of the unconstrained problem \eqref{pconso} for $\rho > \kappa' \ell$, then $\ox \in \O$ and it is a local minimizer of the original constrained problem \eqref{conso}.\\
\item[(2).] if $\ox $ is a $d$-stationary point of the unconstrained problem \eqref{pconso} for $\rho > \kappa' \ell$, then $\ox \in \O$ and it is a $d$-stationary point of the original constrained problem \eqref{conso}.
\item[(3).] if $\ox \in \O $ is a $d$-stationary point of the constrained problem \eqref{conso}, then $\ox $ is a $d$-stationary point of the unconstrained problem \eqref{pconso} when $\rho \geq \kappa \ell$ provided that the metric subregularity constrained qualification \eqref{msqccons} holds at $\ox$ with a constant $\kappa$.    
\end{itemize}
\end{Theorem}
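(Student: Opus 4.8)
The plan is to reduce all three assertions to the subderivative calculus for the penalized objective $\Psi(x):=\varphi(x)+\rho\,p(F(x))$, where $p:=\dist(\cdot\,;X)$, so that \eqref{pconso} is the unconstrained minimization of $\Psi$. The engine is the chain rule: since $p$ is globally Lipschitz with $\dom p=\R^m$, the metric subregularity qualification \eqref{msqc} for $p\circ F$ is automatic, so Theorem \ref{fcalc} gives the exact value $\d(p\circ F)(\ox)(w)=\d p(F(\ox))(\d F(\ox)w)$ for every $w$, with $\d p$ read off from \eqref{subdist}. Writing $P:=p\circ F$, this means that when $F(\ox)\notin X$ the penalty $P$ is semi-differentiable at $\ox$ (Theorem \ref{semidist}(1) and Proposition \ref{semicalc}) with $\d P(\ox)(w)=\min\{\la F(\ox)-y,\d F(\ox)w\ra/p(F(\ox)):y\in\proj_X(F(\ox))\}$, while when $F(\ox)\in X$ one has $\d P(\ox)(w)=\dist(\d F(\ox)w\,;T_X(F(\ox)))$.

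The crux is the feasibility claim in (1) and (2): if $\ox$ is a local minimizer or a $d$-stationary point of \eqref{pconso} with $\rho>\kappa'\ell$, then $\ox\in\O$. I would argue by contradiction. Suppose $F(\ox)\notin X$, so $p(F(\ox))>0$. Since $\{w:\|w\|\le 1\}$ and $\proj_X(F(\ox))$ are compact and $\d F(\ox)$ is continuous, the infimum in \eqref{spp1} is attained, so there is $w_0$ with $\|w_0\|\le 1$ for which $\min_{y\in\proj_X(F(\ox))}\la F(\ox)-y,\d F(\ox)w_0\ra\le -p(F(\ox))/\kappa'$, that is $\d P(\ox)(w_0)\le -1/\kappa'$. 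Because $F(\ox)\notin X$ forces $P$ to be semi-differentiable at $\ox$, the subderivative sum rule holds with equality; combining it with the bound $|\d\varphi(\ox)(w_0)|\le\ell\|w_0\|$ from Remark \ref{subrel} gives $\d\Psi(\ox)(w_0)=\d\varphi(\ox)(w_0)+\rho\,\d P(\ox)(w_0)\le \ell-\rho/\kappa'<0$ since $\rho>\kappa'\ell$. This contradicts $\d\Psi(\ox)(w)\ge 0$, which holds at any local minimizer and, by definition, at any $d$-stationary point of \eqref{pconso}. Hence $F(\ox)\in X$.

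Once feasibility holds, (1) is immediate: for feasible $x$ near $\ox$ one has $p(F(x))=0=p(F(\ox))$, so $\Psi=\varphi$ on $\O$ near $\ox$ and local minimality transfers. For (2), with $\ox\in\O$ the sum rule again gives $\d\Psi(\ox)(w)=\d\varphi(\ox)(w)+\rho\,\dist(\d F(\ox)w\,;T_X(F(\ox)))$; whenever $\d F(\ox)(w)\in T_X(F(\ox))$ the penalty term vanishes, so $0\le\d\Psi(\ox)(w)=\d\varphi(\ox)(w)$, which is exactly $d$-stationarity of \eqref{conso}. Assertion (3) runs the reverse direction and is easier, since only the generic superadditivity $\d\Psi(\ox)(w)\ge\d\varphi(\ox)(w)+\rho\,\d P(\ox)(w)$ of the $\liminf$ is needed. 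Fixing $w$, let $\ow$ be its projection onto the closed cone $T_\O(\ox)=\{u:\d F(\ox)u\in T_X(F(\ox))\}$ of Corollary \ref{calctan}; the $\ell$-Lipschitzness of $w\mapsto\d\varphi(\ox)(w)$ (immediate from \eqref{subderivatives}) together with $\d\varphi(\ox)(\ow)\ge 0$ gives $\d\varphi(\ox)(w)\ge -\ell\,\dist(w;T_\O(\ox))$, and the global estimate $\dist(w;T_\O(\ox))\le\kappa\,\dist(\d F(\ox)w;T_X(F(\ox)))$ of Corollary \ref{calctan} then yields $\d\Psi(\ox)(w)\ge(\rho-\kappa\ell)\,\dist(\d F(\ox)w;T_X(F(\ox)))\ge 0$ for $\rho\ge\kappa\ell$, i.e. $d$-stationarity of \eqref{pconso}.

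The main obstacle is the feasibility step: converting the purely directional estimate \eqref{spp1} into a strict decrease of $\Psi$ at an infeasible candidate, which relies on the exact subderivative of the distance penalty (Theorem \ref{semidist}) and on the equality form of the sum rule, available precisely because infeasibility $F(\ox)\notin X$ forces the penalty to be semi-differentiable at $\ox$. A secondary subtlety is that the reverse inequality used in (2) rests on semi-differentiability of the penalty at the feasible point $\ox$, i.e. on geometric derivability of $X$ at $F(\ox)$ (Theorem \ref{semidist}(3)); by contrast, the lower bound driving (3) uses only the generic superadditivity of the subderivative and needs no such regularity.
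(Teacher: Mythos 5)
Your proposal is correct and, on the heart of the theorem, it is essentially the paper's own argument. For the feasibility claim in (1)--(2), both you and the paper compute the subderivative of $\varphi+\rho\,\dist(F(\cdot);X)$ at an infeasible point via the chain rule of Theorem \ref{fcalc} and formula \eqref{subdist}, use the Lipschitz bound $|\d\varphi(\ox)(w)|\le\ell$, and contradict $\rho>\kappa'\ell$ through \eqref{spp1}; the only cosmetic difference is that you attain the infimum (legitimately, by compactness and continuity of $\d F(\ox)$) and exhibit a strictly negative subderivative, while the paper plugs in arbitrary $(w,y)$, deduces a uniform lower bound, and takes the infimum at the end to get $\rho\le\kappa'\ell$. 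The conclusions of (1) and (2) after feasibility are also identical to the paper's. Where you genuinely diverge is assertion (3): the paper passes to the linearized problem \eqref{spp2}, transfers metric subregularity to $w=0$ by Corollary \ref{calctan}, applies the exact-penalty Theorem \ref{ep}(1) to that linearized problem, and then upgrades local to global minimality by positive homogeneity; you instead prove the inequality directly, using that $\d\varphi(\ox)(\cdot)$ is $\ell$-Lipschitz and nonnegative on $T_{\O}(\ox)$, so that $\d\varphi(\ox)(w)\ge-\ell\,\dist(w;T_{\O}(\ox))\ge-\ell\kappa\,\dist\big(\d F(\ox)w;T_X(F(\ox))\big)$ by the same Corollary \ref{calctan}, and then invoking superadditivity of the subderivative of a sum. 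Your route is shorter, avoids a second appeal to the exact-penalty theorem, and makes explicit the superadditivity step that the paper's last line ("which means $\ox$ is a $d$-stationary point of \eqref{pconso}") uses silently. Both routes rest on the same key lemma, Corollary \ref{calctan}.

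Your closing caveat about (2) is not a defect of your write-up but a genuine observation about the paper: passing from $\d f_\rho(\ox)(w)\ge0$ to $\d\varphi(\ox)(w)\ge0$ for $\d F(\ox)(w)\in T_X(F(\ox))$ requires the "$\le$" direction of the sum rule at the \emph{feasible} point $\ox$, which holds when the penalty is semi-differentiable there, i.e.\ when $X$ is geometrically derivable at $F(\ox)$ (Theorem \ref{semidist}(3)); this is not among the stated hypotheses, and the paper's proof writes the sum rule as an identity without justification at exactly this step. The issue is real: for $X=\{0\}\cup\{4^{-k}:k\in\N\}\subset\R$, $F=\mathrm{id}$ and $\varphi=-\dist(\cdot;X)$ (so $\ell=1$, and \eqref{spp1} holds trivially at $\ox=0$), the origin is a global minimizer, hence $d$-stationary, for \eqref{pconso} with $\rho=2$, yet $1\in T_X(0)$ while $\d\varphi(0)(1)=-3/5<0$, so it is not $d$-stationary for \eqref{conso}. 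So your proof matches the paper's wherever the paper's argument is sound, and flags precisely the one step where an additional hypothesis (derivability of $X$ at $F(\ox)$) is needed.
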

\begin{proof}
To prove (1), first assume that $\ox \in \O$ is a local minimizer of \eqref{pconso}. Hence, without loss of generality, inequality \eqref{ep1} holds for all $x \in \B_{\varepsilon}(\ox)$. Following the same lines of the proof in Theorem \ref{ep}(2), we obtain that $\ox$ is a local minimizer of the constrained problem \eqref{conso}. To complete the proof, it remains to show that $\ox \in \O.$ Suppose that $\ox \notin \O$ is a local minimizer of \eqref{pconso}. Therefore, $\ox$ is a $d$-stationary point of problem\eqref{pconso}. The chain rule \eqref{fcalc1} together with \eqref{subdist} help us describe $d$-stationarity of $\ox$ in terms of the initial data as follows: for all $w \in \R^n$ 
\begin{equation*}
\d \varphi (\ox)(w) + \rho \; \min_{y \in \proj_{X}(F(\ox))} \la \d F(\ox) (w) ~ , ~ \frac{F(\ox) - y}{\dist (F(\ox) ; X)} \ra  \geq 0
\end{equation*}   
Pick $w \in \R^n$ with $\|w\| \leq 1$ and $y \in \proj_{X} (F(\ox))$. By plugging these $w$ and $y$ into the above inequality we get
\begin{equation*}
\la \d F(\ox) (w) ~ , ~ F(\ox) - y \ra  \geq - \frac{\dist (F(\ox) ; X)}{\rho} \; \d \varphi (\ox)(w)
\end{equation*}  
 $\ell > 0$ is a Lipschitz constant for $\varphi$, thus $ - \ell \leq \d  \varphi (\ox)(w)$. Combining the above inequality with \eqref{spp1} we arrive at
 \begin{equation*}
  - \frac{1}{\kappa' } \; \dist (F(\ox) ; X) \geq \la \d F(\ox) (w) ~ , ~ F(\ox) - y \ra  \geq - \frac{\dist (F(\ox) ; X)}{\rho} \; \d \varphi (\ox)(w) \geq - \ell \frac{\dist (F(\ox) ; X)}{\rho}
\end{equation*}
Since $\ox \notin \O$, we have $\dist (F(\ox) ; X) > 0$, so by canceling $\dist (F(\ox) ; X) > 0$ from the above inequalities we get $\rho \leq \kappa'  \ell$, which is a contradiction.  Now turning to (2), first we suppose that $\ox \in \O$  is a $d-$stationary point of \eqref{pconso} for $\rho > \kappa' \ell$. Again by using \eqref{fcalc1} and \eqref{subdist}, for all $w \in \R^n$, we have
\begin{equation*}
\d \varphi (\ox)(w) + \rho \;   \dist \big( \d F(\ox)(w) ~;~ T_{X} (F(\ox)) \big)  \geq 0
\end{equation*} 
In particular, for all $w$ with $\d F(\ox)(w) \in T_{X} (F(\ox))$, we have $\d \varphi (\ox)(w)  \geq 0 $,  which means $\ox \in \O$ is a $d-$stationary point of the constrained problem \eqref{conso}. To finish the proof of (2), it remains to show that $\ox \in \O$. If $\ox \notin \O$, proceeding the same lines of the proof in assertion (1) will lead us to a contradiction. 
To prove assertion (3), we assume $\ox \in \O$ is a $d$-stationary point of \eqref{conso} which implies $w=0$ be the global minimizer of the following problem 
\begin{equation}\label{spp2}
\mbox{minimize} \;\;  \d \varphi(\ox)(w) \quad \mbox{subject to}\;\; \d F(\ox)(w)  \in T_{X} (F(\ox)). 
\end{equation}
Since metric subregularity constraint qualification\eqref{msqccons} holds at $\ox$, by Corollary \ref{calctan}, the metric subregulariy constraint qualification holds at $w=0$ for problem \eqref{spp2} with the same constant,  i.e, for all $w \in \R^n$ we have
\begin{equation*}\label{mspp}
\dist \big( w\; ; \; T_{\O}(x)  \big) \leq \kappa \; \dist \big(\d F(\ox)(w) \; ; \; T_{X} (F(\ox))\big)
\end{equation*}  
now by applying Theorem \ref{ep}(1) on the constrained problem \eqref{spp2}, we get $w=0$ as a local minimizer of the following unconstrained problem
\begin{equation*}
\mbox{minimize} \;\;  \d \varphi(\ox)(w) + \rho \; \dist \big(\d F(\ox)(w) \; ; \; T_{X} (F(\ox))\big) \quad \mbox{over all}\;\; w \in \R^n . 
\end{equation*}
Since objective function of the above problem is positively homogeneous, $w=0$ is in fact a global minimizer of the above problem. Hence, for all $w \in \R^n$ we have 
$$\d \varphi(\ox)(w) + \rho \; \dist \big(\d F(\ox)(w) \; ; \; T_{X} (F(\ox))\big) \geq 0$$
which means $\ox \in \O$ is a $d$-stationary point of the unconstrained problem \eqref{pconso}.
\end{proof}
Condition \eqref{spp1} is similar to a constraint qualification because it is a condition only involving the constraint set with the exception that it does not require $\ox$ to be a feasible point of \eqref{conso}. In the next result, we mention four sufficient conditions for \eqref{spp1}.
\begin{Proposition}[\bf sufficient  conditions for \eqref{spp1} ]\label{sufcon}
In the setting of the constraint optimization \eqref{conso}, assume $X$ is a closed set, and $F:\R^n \to \R^m$ is semidifferentiable at $\ox$. If one of the following conditions holds, then \eqref{spp1} holds with the specified $\kappa'$. 
\begin{itemize}
\item[(1).] if $F(\ox) \in X$, then \eqref{spp1} holds for all $\kappa' > 0 .$

\item[(2).] if $w=(u,v) \in \R^n$ and $\d F(\ox)(w)= C(u) + T(v) $, where $C(0) = 0$ and $T$ is a surjective linear map, then \eqref{spp1} holds with $\kappa' = \|T\|$.

\item[(3).] if $F$ is differentiable at $\ox$, and $w \to \nabla F(\ox)(w)$ is  surjective (LICQ), then \eqref{spp1} holds with  $\kappa' = \|\nabla G (\ox)\|$.

\item[(4).] if $X$ is convex, and there exists $\ow \in \B$ such that
\begin{equation}\label{sufcon1}
 S(\ox , \ow) := \la \d F(\ox)(\ow) ~ , ~ F(\ox) - \proj_{X} (F(\ox)) \ra < 0,
\end{equation}
then \eqref{spp1} holds with $\kappa' = - \frac{ \dist (F(\ox) ; X)}{S(\ox , \ow)}.$ 
\end{itemize}
\end{Proposition}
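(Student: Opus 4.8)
The plan is to exploit that \eqref{spp1} asks only for an \emph{upper} bound on an infimum, so in each case it suffices to exhibit a single admissible pair $(w,y)$ with $\|w\|\le 1$ and $y\in\proj_X(F(\ox))$ for which $\la \d F(\ox)(w),F(\ox)-y\ra$ already meets the threshold $-\tfrac{1}{\kappa'}\dist(F(\ox);X)$. Since $X$ is closed, $\proj_X(F(\ox))$ is nonempty, so at least one such $y$ is available. I would dispose of (1) and (4) first, as they are pure substitutions. For (1), if $F(\ox)\in X$ then $\dist(F(\ox);X)=0$ and $\proj_X(F(\ox))=\{F(\ox)\}$, so $F(\ox)-y=0$ and the inner product vanishes for every $w$; the infimum is therefore $0$, which matches the right-hand side for any $\kappa'>0$. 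For (4), convexity of $X$ makes $\proj_X(F(\ox))$ a singleton, the inner infimum disappears, and choosing $w=\ow\in\B$ yields exactly $S(\ox,\ow)$; since $\kappa'$ is \emph{defined} by $\kappa'=-\dist(F(\ox);X)/S(\ox,\ow)$, the target $-\tfrac{1}{\kappa'}\dist(F(\ox);X)$ equals $S(\ox,\ow)$ and the bound holds with equality.

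The substance lies in (2), with (3) a special case. Here I would assume $F(\ox)\notin X$ (otherwise (1) applies), fix any $y\in\proj_X(F(\ox))$, and set $d:=F(\ox)-y$, so that $\|d\|=\dist(F(\ox);X)>0$. Writing $w=(u,v)$ and taking $u=0$ kills the $C$-term, since $C(0)=0$, leaving $\d F(\ox)(0,v)=T(v)$ and $\|w\|=\|v\|$. Because $T$ is a surjective linear map, it admits a bounded right inverse $T_R^{-1}$ (for instance $T^{\ast}(TT^{\ast})^{-1}$); I would then pick $v:=-T_R^{-1}d/\|T_R^{-1}d\|$, so that $\|v\|=1$, $T(v)=-d/\|T_R^{-1}d\|$, and hence
\begin{equation*}
\la \d F(\ox)(0,v),\,d\ra=-\frac{\|d\|^2}{\|T_R^{-1}d\|}\le -\frac{\|d\|^2}{\|T_R^{-1}\|\,\|d\|}=-\frac{1}{\|T_R^{-1}\|}\dist(F(\ox);X),
\end{equation*}
using $\|T_R^{-1}d\|\le\|T_R^{-1}\|\,\|d\|$. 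This is precisely \eqref{spp1} with $\kappa'$ the norm of the right inverse of $T$. Case (3) is recovered by taking $C=0$ and $T=\nabla F(\ox)$, the Jacobian being surjective under LICQ.

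The main obstacle is exactly this book-keeping in (2)--(3): one must produce a unit-norm $v$ (so that $(0,v)$ lies in the admissible ball) while forcing $\la T(v),d\ra$ to be no larger than $-\tfrac{1}{\kappa'}\dist(F(\ox);X)$, and this is where surjectivity of $T$ --- equivalently, the existence of a right inverse of controlled norm --- is indispensable, since a non-surjective linear part could not steer $T(v)$ into the direction $-d$. Accordingly I would present the constant $\kappa'$ in (2)--(3) as the norm of this right inverse (equivalently $1/\sigma_{\min}(T)$), which is the quantity that the argument actually produces. Cases (1) and (4), by contrast, require no such effort. Finally, I would emphasize that every case uses only admissible test pairs and never the feasibility of $\ox$, which is consistent with the role of \eqref{spp1} as a constraint-qualification-type condition that tolerates $F(\ox)\notin X$.
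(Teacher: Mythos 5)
Your proof is correct, and for parts (1) and (4) it coincides with the paper's argument: (1) is immediate because $\proj_{X}(F(\ox))=\{F(\ox)\}$ and $\dist(F(\ox);X)=0$, and (4) follows by testing the single pair $(\ow,\proj_{X}(F(\ox)))$, the definition of $\kappa'$ turning \eqref{sufcon1} into \eqref{spp1} with equality. In (2)--(3), however, you take a genuinely different route, and it is the sounder one. The paper, like you, restricts to $w=(0,v)$ and uses $C(0)=0$, but it then evaluates the infimum exactly, $\inf_{\|v\|\le 1}\la T(v),y\ra=-\|T^{*}(y)\|$, and asserts that surjectivity of $T$ (i.e. $\ker T^{*}=\{0\}$) implies $\|T^{*}(y)\|\ge\frac{1}{\|T\|}\,\|y\|$, which is what produces the announced constant $\kappa'=\|T\|$. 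That implication is false in general: injectivity of $T^{*}$ only yields $\|T^{*}(y)\|\ge\sigma_{\min}(T)\,\|y\|$, and $\sigma_{\min}(T)$ need not dominate $1/\|T\|$ (take $T=\epsilon I$ with $0<\epsilon<1$, so $\|T^{*}y\|=\epsilon\|y\|<\frac{1}{\epsilon}\|y\|=\frac{1}{\|T\|}\|y\|$). Concretely, for $F(x)=\epsilon x$ and any closed $X$ with $F(\ox)\notin X$, the left-hand side of \eqref{spp1} equals $-\epsilon\,\dist(F(\ox);X)$, so \eqref{spp1} fails with $\kappa'=\|\nabla F(\ox)\|=\epsilon<1$; hence the constants stated in (2) and in (3) (the latter misprinted as $\|\nabla G(\ox)\|$) are not the ones the argument can support. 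Your construction via a right inverse $T_{R}^{-1}=T^{*}(TT^{*})^{-1}$, with the explicit direction $v=-T_{R}^{-1}d/\|T_{R}^{-1}d\|$, delivers the correct constant $\kappa'=\|T_{R}^{-1}\|=1/\sigma_{\min}(T)$, which is exactly what the paper's own duality computation gives once the faulty inequality is replaced by $\|T^{*}(y)\|\ge\sigma_{\min}(T)\,\|y\|$. So your proposal not only proves the proposition but also repairs its constants; the only residual difference is cosmetic: the paper's exact evaluation $-\|T^{*}d\|$ is pointwise at least as sharp as your one-direction bound $-\|d\|/\|T_{R}^{-1}\|$, but both lead to the same worst-case $\kappa'$.
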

\begin{proof} If $F(\ox) \in X$, then we get $\proj_{X}(F(\ox)) =\{F(\ox)\}$ and $\dist (F(\ox) ; X) =0$. Hence, \eqref{spp1} holds trivially for all $\kappa' >0 .$ To prove assertions (2)-(3), we claim that there exists $\kappa' > 0$ such that for all $y \in \R^m$ we have 
\begin{equation*}
\inf_{\substack{\|w\| \leq 1}} {\la \d F(\ox)(w) ~ , ~ y \ra} \leq - \frac{1}{\kappa'} \; \|y\|.
\end{equation*}
Clearly the above inequality implies \eqref{spp1}. To prove the claim, first note that in the setting of assertion (2) we have $\|w\| = \sqrt{\|u\|^2 + \|v\|^2} $, which implies that the set of all $w=(0,v)$ with $ \|v\| \leq 1$ is a subset of the set of all $w$ with $\|w\|\leq 1$. Moreover, $T$ is a surjective linear map, thus we have $\ker (T^*) = \{0\}$ which implies $\| T^*(y)  \| \geq \frac{1}{\|T\|} \; \|y\|$. Having the latter in mind with the assumption that $C(0) = 0$, the following inequalities hold
\begin{eqnarray*}
\disp
\inf_{\substack{\|w\| \leq 1}} {\la \d F(\ox)(w) ~ , ~ y \ra} &\leq & \inf_{\substack{\|v\| \leq 1 }} {\la C(0) + T(v) ~ , ~ y \ra}\\\nonumber
&=&\inf_{\substack{\|v\| \leq 1 }} {\la T(v) ~ , ~ y \ra}\\\nonumber
&=&\inf_{\substack{\|v\| \leq 1 }} {\la v ~, ~ T^* (y) \ra} = - \|T^* (y)\| \leq \frac{-1}{\|T\|} \; \|y\|\\\nonumber
\disp
\end{eqnarray*}
by defining $\kappa' := \| T \|$, we complete the proof of the claim and assertion (2). Turning to (3), the proof is the same as the proof of (2) when we set $w=v$, $T=\nabla F(\ox)$, and ignore the partial variable $u$ and function $C$. Under the assumption in (3), also known as \textit{"LICQ"}, we obtain  $\kappa' = \| \nabla F(\ox)\|$. In (4), $X$ is closed and convex, hence, $\proj_X (F(\ox))$ is singleton. We also have
\begin{equation*}\label{sufcon2}
\inf_{\substack{
   \|w\| \leq 1  \\
  y \in \proj_{X} (F(\ox))
  }} {\la \d F(\ox)(w) ~ , ~ F(\ox) - y \ra} \leq S(\ox , \ow) =  \frac{ S(\ox , \ow)}{\dist (F(\ox) ; X)}\dist (F(\ox) ; X)
\end{equation*}
which proves (iv).  
\end{proof}
Following \cite{cp}, one application of exact penalization is to change composite problem to  additive composite problems. The latter is useful for solving deep neural network problems with non-smooth activation functions; see \cite{cp}. Recall that the unconstrained optimization problems are the optimization problems in the following form
\begin{equation}\label{comp}
\mbox{minimize} \;\;  (g \circ F)(x) \quad  \mbox{subject to} \;\;  x \in \R^n. 
\end{equation} 

\begin{Corollary}[\bf pull-out penalty]\label{pullout}
Consider the composite problem \eqref{comp}, where $F:\R^n \to \R^m$ is semi-differentiable at $\ox$ and $g$ is Lipschitz continuous around $F(\ox)$ with a constant $\ell >0$. Then, problem \eqref{comp} can be equivalently written as follows 
\begin{equation*}\label{pullout1}
\mbox{minimize} \;\;  g(y) \quad \mbox{subject to}\;\; F(x) = y ,
\end{equation*}
which its the  exact penalization has a representation of the form
\begin{equation}\label{pullout2}
\mbox{minimize} \;\;  g(y) + \rho \; \| F(x) - y \| \quad \mbox{subject to}\;\;  (x , y) \in \R^n \times \R^m.
\end{equation}

Then,  if $(\ox , \oy)$ is a d-stationary (local minimizer) point of \eqref{pullout2} with $\rho > \ell$ then $\ox$ is a d-stationary (local minimizer) point of problem \eqref{comp}.      
\end{Corollary}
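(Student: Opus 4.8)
The plan is to recognize \eqref{pullout2} as the exact penalized problem \eqref{pconso} attached to a lifted constrained problem and then invoke Theorem \ref{spp}. Concretely, set $z=(x,y)\in\R^n\times\R^m$, let $\varphi(z):=g(y)$ be the objective, let $G(z):=F(x)-y$ be the constraint mapping, and take the constraint set to be $X_0:=\{0\}\subset\R^m$. Since $\dist(\,\cdot\,;\{0\})=\|\cdot\|$, the problem ``minimize $g(y)$ subject to $G(z)\in X_0$'' is exactly the pulled-out reformulation of \eqref{comp}, and its exact penalization \eqref{pconso} coincides with \eqref{pullout2}. The map $G$ is semi-differentiable at $(\ox,\oy)$ with $\d G(\ox,\oy)(u,v)=\d F(\ox)(u)-v$, and $\varphi$ is Lipschitz around $(\ox,\oy)$ with constant $\ell$ because it depends on $y$ only and $g$ is $\ell$-Lipschitz near $F(\ox)$.

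Next I would verify the hypotheses of Theorem \ref{spp}. The feasible set here is the graph $\gph F=\{(x,F(x))\}$, and the elementary bound $\dist((x,y);\gph F)\le\|F(x)-y\|$ shows that the metric subregularity estimate \eqref{msqccons} holds globally with $\kappa=1$. Condition \eqref{spp1} follows from Proposition \ref{sufcon}(2): writing $\d G(\ox,\oy)(u,v)=C(u)+T(v)$ with $C(u):=\d F(\ox)(u)$ (so $C(0)=0$ since the semi-derivative is homogeneous of degree one) and $T(v):=-v$ a surjective linear map of norm one, we obtain \eqref{spp1} with $\kappa'=\|T\|=1$. Thus the threshold $\rho>\kappa'\ell=\ell$ demanded by Theorem \ref{spp} is precisely the hypothesis $\rho>\ell$ of the corollary.

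With the hypotheses in place I would apply the two relevant parts of Theorem \ref{spp}. If $(\ox,\oy)$ is a local minimizer (resp.\ d-stationary point) of \eqref{pullout2} with $\rho>\ell$, then Theorem \ref{spp}(1) (resp.\ \ref{spp}(2)) yields $(\ox,\oy)\in\gph F$, i.e.\ $\oy=F(\ox)$, together with local minimality (resp.\ d-stationarity) for the lifted constrained problem. In the minimizer case I then substitute the constraint $y=F(x)$: for $x$ near $\ox$ the point $(x,F(x))$ is feasible and, by continuity of the semi-differentiable map $F$, lies in the relevant neighborhood, so $g(F(x))\ge g(F(\ox))$, i.e.\ $\ox$ minimizes $g\circ F$ locally.

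For the d-stationary case the last step is to translate the lifted stationarity back to \eqref{comp}. A direct computation of subderivatives gives $\d\varphi(\ox,\oy)(u,v)=\d g(\oy)(v)$, while the requirement $\d G(\ox,\oy)(u,v)\in T_{X_0}(0)=\{0\}$ forces $v=\d F(\ox)(u)$; hence the lifted d-stationarity reads $\d g(F(\ox))(\d F(\ox)(u))\ge 0$ for all $u\in\R^n$. By the subderivative chain rule (Theorem \ref{fcalc}), whose hypotheses hold because $g$ is finite-valued and Lipschitz around $F(\ox)$ (making the qualification \eqref{msqc} trivial) and $F$ is semi-differentiable, the left-hand side equals $\d(g\circ F)(\ox)(u)$, so $\ox$ is d-stationary for the unconstrained problem \eqref{comp} in the sense of Definition \ref{sta}. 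The main obstacle, and the only place demanding care, is the bookkeeping that identifies the abstract data $\kappa,\kappa',\varphi,G$ of Theorem \ref{spp} with the lifted quantities and then pushes the conclusion through the chain rule; once \eqref{spp1} is secured via Proposition \ref{sufcon}(2) with $\kappa'=1$, everything else is routine.
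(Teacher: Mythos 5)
Your proof is correct and follows essentially the same route as the paper: lift to the constrained problem with data $\varphi(x,y)=g(y)$, constraint map $F(x)-y$, and $X=\{0\}^m$, verify \eqref{spp1} via Proposition \ref{sufcon}(2) with $T=\pm I$ (so $\kappa'=1$), and apply Theorem \ref{spp}(1)--(2). You additionally spell out details the paper leaves implicit --- the global metric subregularity estimate with $\kappa=1$ and the chain-rule translation of the lifted minimality/stationarity back to problem \eqref{comp} --- which only strengthens the argument.
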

\begin{proof}
Set $F(x,y):= F(x) + y$, $\varphi(x,y):= g(y)$, and $X=\{0\}^m$. Then, problem \eqref{pconso} boils down to \eqref{pullout2}. With the above setting, assertion (2) in Proposition \ref{sufcon} holds with $ T = I$. The result follows from assertions (1)-(2) in Theorem \ref{spp}.
\end{proof}
Although Theorem \ref{spp} requires the distance penalty to be defined by the Euclidean norm, one can get similar results with $\ell_1$-penalization. Indeed, $\ell_2$ and $\ell_1$ norms coincide in $\R$, hence, writing $\R^n$ as the Cartesian product of real lines equipped with Euclidean norm, gives us $\R^n$ equipped with $\ell_1$ norm. In this regard, we have the next proposition for which we do not provide a proof, as its proof is similar to \eqref{spp}. 
\begin{Proposition}[\bf $\ell_1$-exact penalty]\label{l1pen}
Suppose $ X := X_1 \times X_2 \times...\times X_m$ and $F=(f_1, f_2,...,f_m)$,  where each $X_i$ is a closed set in an Euclidean space and each $g_i$ is semi-differentiable at $\ox$. Then, a penalization of problem \eqref{conso} takes the following form
\begin{equation}\label{l1pen1}
\mbox{minimize} \;\;  \varphi(x) + \rho \; \sum_{i=1}^{m} \dist (f_i (x) ; X_i) \quad \mbox{subject to}\;\; x  \in \R^n . 
\end{equation}
Define the active index set $I(\ox):= \{i | f_i(\ox) \in X_i\}.$ Assume that there exist a $\ow \in B$ and a $y_i \in \proj_{X_i} (f_i(\ox))$ for each $i =1,2,...,m$ such that
\begin{equation}\label{l1pen2}
\left\{\begin{matrix}
     \d f_i (\ox)(\ow) \in T_{X_i} (y_i) & i \in I(\ox)   \\\\  
   S_i := \la \d f_i(\ox)(\ow) ~ , ~ f_i(\ox) - y_i ) \ra < 0  &   i  \notin I(\ox)
\end{matrix}\right.
\end{equation}
Set $$\kappa' := \left\{\begin{matrix}
     \sum_{i \notin I(\ox)} \frac{S_i}{\|f_i(\ox) - y_i \|} & if~ I^c (\ox)\neq \emptyset   \\\\  
   1  &  if~ I^c (\ox) = \emptyset
\end{matrix}\right. $$
If $\ox$ is a stationary (local minimizer) point of problem \eqref{l1pen1} with $\rho > \kappa' \ell$, then $\ox$ is a stationary (local minimizer) point of problem \eqref{conso}.  
\end{Proposition}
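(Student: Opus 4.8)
The plan is to mirror the proof of Theorem~\ref{spp}, replacing the single distance penalty $\dist(F(\cdot);X)$ by the separable sum $\Theta(x):=\sum_{i=1}^m\dist(f_i(x);X_i)$ and organizing everything around the active/inactive split recorded by $I(\ox)$. First I would compute the subderivative of the penalized objective $\psi:=\varphi+\rho\,\Theta$ of \eqref{l1pen1} at $\ox$. Since each $f_i$ is semi-differentiable and each $\dist(\cdot;X_i)$ is globally Lipschitz, the subderivative chain rule \eqref{fcalc1} applies to every composite $\dist(f_i(\cdot);X_i)$ (the qualification \eqref{msqc} is trivial because $\dom\dist(\cdot;X_i)=\R^{m_i}$), and combined with \eqref{subdist} it yields, for $w\in\R^n$,
\[
\d[\dist(f_i(\cdot);X_i)](\ox)(w)=
\begin{cases}
\dist\big(\d f_i(\ox)(w)\,;\,T_{X_i}(f_i(\ox))\big), & i\in I(\ox),\\[1mm]
\min_{y\in\proj_{X_i}(f_i(\ox))}\dfrac{\la f_i(\ox)-y,\ \d f_i(\ox)(w)\ra}{\dist(f_i(\ox);X_i)}, & i\notin I(\ox).
\end{cases}
\]
Invoking the semi-differentiability of the separable sum from Corollary~\ref{prosemidist} (so that the subderivative sum rule holds with equality), the $d$-stationarity of $\ox$ for \eqref{l1pen1} then reads $\d\varphi(\ox)(w)+\rho\sum_{i=1}^m\d[\dist(f_i(\cdot);X_i)](\ox)(w)\ge 0$ for all $w$.

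Next I would prove feasibility $\ox\in\O$ by contradiction, exactly as in Theorem~\ref{spp}(1). If $\ox\notin\O$ then $I^c(\ox)\ne\emptyset$, and a local minimizer of \eqref{l1pen1} is in particular $d$-stationary. Plugging the test direction $w=\ow$ from \eqref{l1pen2} into the stationarity inequality, the active terms vanish because $\d f_i(\ox)(\ow)\in T_{X_i}(y_i)=T_{X_i}(f_i(\ox))$ forces $\dist(\d f_i(\ox)(\ow);T_{X_i}(f_i(\ox)))=0$, while for each inactive index the minimum is bounded above by the chosen $y_i$, giving $\d[\dist(f_i(\cdot);X_i)](\ox)(\ow)\le S_i/\|f_i(\ox)-y_i\|<0$. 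Using $\|\ow\|\le 1$ and the Lipschitz bound $\d\varphi(\ox)(\ow)\le\ell\|\ow\|\le\ell$ from Remark~\ref{subrel}, the stationarity inequality collapses to $0\le\ell+\rho\sum_{i\in I^c(\ox)}S_i/\|f_i(\ox)-y_i\|$, hence $\rho\le\big(\sum_{i\in I^c(\ox)}|S_i|/\|f_i(\ox)-y_i\|\big)^{-1}\ell=\kappa'\ell$, contradicting $\rho>\kappa'\ell$. (Here I would take the constant to be $\kappa'=\big(\sum_{i\in I^c(\ox)}|S_i|/\|f_i(\ox)-y_i\|\big)^{-1}>0$; note this is the reciprocal of $-\sum_{i\in I^c(\ox)}S_i/\|f_i(\ox)-y_i\|$, and it reduces to the single-constraint constant $\kappa'=-\dist(F(\ox);X)/S(\ox,\ow)$ of Proposition~\ref{sufcon}(4), so the formula displayed in the statement seems to omit the reciprocal and a sign.)

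With feasibility in hand the transfer of stationarity and local minimality is immediate. For the $d$-stationary case, any $w$ that is feasible for \eqref{conso} in the sense of Definition~\ref{sta} satisfies $\d f_i(\ox)(w)\in T_{X_i}(f_i(\ox))$ for every $i$ (since $T_X(F(\ox))=\prod_iT_{X_i}(f_i(\ox))$), so all distance subderivatives vanish and the stationarity inequality reduces to $\d\varphi(\ox)(w)\ge 0$; this is precisely $d$-stationarity of \eqref{conso}. For the local-minimizer case I would repeat the comparison argument of Theorem~\ref{ep}(2)/Theorem~\ref{spp}(1): feasibility together with $\dist(f_i(x);X_i)=0$ on $\O$ turns the penalized inequality \eqref{ep1} into $\varphi(x)\ge\varphi(\ox)$ for all feasible $x$ near $\ox$.

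The delicate step is the very first one, namely writing $\d\psi(\ox)$ as the exact sum $\d\varphi+\rho\sum_i\d[\dist(f_i(\cdot);X_i)]$ at a possibly \emph{infeasible} $\ox$. The active summands $\dist(f_i(\cdot);X_i)$ with $i\in I(\ox)$ need not be semi-differentiable unless $X_i$ is geometrically derivable at $f_i(\ox)$ (Theorem~\ref{semidist}(3)), and without semi-differentiability the liminf defining the subderivative of a sum only dominates the sum of the individual subderivatives, which is the wrong direction for the upper bound that drives the contradiction. I would therefore carry the standing geometric-derivability hypothesis on the $X_i$ (consistent with the paper's convention and with Corollary~\ref{prosemidist}(2)), under which $\Theta$ is semi-differentiable at $\ox$, the sum rule holds with equality, and the active subderivatives are genuine semi-derivatives equal to zero along $\ow$; the remaining manipulations are then verbatim those of Theorem~\ref{spp}.
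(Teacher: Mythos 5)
Your proposal is correct and is exactly the argument the paper intends: the paper prints no proof of Proposition \ref{l1pen}, saying only that it is similar to Theorem \ref{spp}, and your term-by-term use of the chain rule \eqref{fcalc1} with \eqref{subdist}, the test direction $\ow$ from \eqref{l1pen2} for the infeasibility contradiction, and the tangent-cone reduction for transferring stationarity is precisely that adaptation. Your two amendments are both genuine and necessary. First, as printed $\kappa'$ is a sum of negative numbers, so $\rho>\kappa'\ell$ holds for every $\rho>0$ and the threshold is vacuous; the constant that the contradiction actually yields is your $\kappa'=\big(\sum_{i\notin I(\ox)}|S_i|/\|f_i(\ox)-y_i\|\big)^{-1}$, which moreover reduces to the constant $\kappa'=-\dist(F(\ox);X)/S(\ox,\ow)$ of Proposition \ref{sufcon}(4) when exactly one constraint is inactive, as it should. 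Second, the semi-differentiability issue you isolate at active indices is a real gap with no counterpart in Theorem \ref{spp}: there the contradiction is run at an infeasible $\ox$, where the single distance composite is automatically semi-differentiable by Theorem \ref{semidist}(1) and Proposition \ref{semicalc}, so the subderivative of the penalized objective splits exactly; in the separable case an infeasible $\ox$ may still have active indices $i\in I(\ox)$, whose terms $\dist(f_i(\cdot);X_i)$ are semi-differentiable at $\ox$ only when $X_i$ is geometrically derivable at $f_i(\ox)$ (Theorem \ref{semidist}(3)), and without that the subderivative of the sum merely dominates the sum of subderivatives, which is the wrong inequality both for the contradiction and for the feasible-case transfer of stationarity. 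Assuming geometric derivability of the $X_i$ (or, minimally, derivability of the tangent vectors $\d f_i(\ox)(\ow)$ for $i\in I(\ox)$, which already makes the active difference quotients $o(t)/t$ along the fixed direction $\ow$) is the correct repair and is consistent with the paper's own standing hypotheses in Section \ref{sec04} and Corollary \ref{prosemidist}.
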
       
\begin{Remark}[\bf comments and comparison with other results]\label{rmsc} {\rm In applications, exact penalizations can be very helpful provided that the term $\dist (x ; X)$ in \eqref{pconso} has a closed form. Luckily, in many problems, $\dist (x ; X)$ has a closed form, as $X$ usually has a nice geometric structure; for instance, $X = \R^{l}_{-} \times \{0\}^{m-l}$, $X = \S^m_{+}$, and $X = \mbox{ The Lorentz cone}.$ Furthermore, by the relationship $\dist (x ; \cup_{i=1}^s  X_i) = \min_{i} \dist (x ; X_i)$, one can compute the penalized term associated with many non-convex and Clarke-irregular constraint sets. To the best of our knowledge, there is no result similar to Theorems \ref{spp} and \ref{l1pen1} linking stationary points of the penalized problems to the original problem in such a generality. The only result we are aware of is \cite[Theorem~9.2.1]{cp} which is for the particular case $X = \R^{l}_{-} \times \{0\}^{m-l}$. While our proof is fundamentally different, which is based on calculus rules for the subderivative, in the case $X = \R^{l}_{-} \times \{0\}^{m-l}$,  one can use the active index sets to simplify the proof significantly. Even within the framework \cite[Theorem~9.2.1]{cp}, our assumptions are less restrictive; in \cite[Theorem~9.2.1]{cp}, authors further assume that $\varphi$ is directionally differentiable and $F$ is locally Lipschitz continuous. Furthermore, it is not difficult to check that the \textit{extended weak Slater CQ}, defined in  \cite[p.~529]{cp}, implies both \eqref{l1pen2} and \eqref{spp1}. Based on counterexamples in Remark \ref{subrel}, authors in \cite[Theorem~9.2.1]{cp} and a few related corollaries, need to impose the Lipschitz continuouty on a neighborhood of $\ox$, not just relative to some set. Let us highlight again the significance of the subderivative over the classical directional derivative; in Theorem \ref{spp}, we could allow $\varphi$ be an extended-real-valued function, which is Lipschitz continuous relative to its domain, provided that we take $w \in T_{\ss \dom \varphi} (\ox)$ in \eqref{spp1} and everywhere in the proof. The latter is possible due to the rich calculus of the subderivative of extended-real-valued functions, established in \cite{mf}, which is absent for the classical directional derivative. In particular, we could add $\delta_{\O} (x)$ to the objective function of problem \eqref{conso} resembling a friendly abstract constraint.   
}
\end{Remark}              
\section{Approximate stationary solutions}\sce \label{sec04}
In the previous section, we observed that the stationary solutions of a constrained optimization problem \eqref{conso} is closely related to its penalized unconstrained problem \eqref{pconso}. The latter observation motivates us to define a version of approximate stationary point for the constrained optimization problem \eqref{conso} by considering the classical approximate stationary point for the unconstrained problem \eqref{pconso}. Recall that, as widely recognized, one way to define approximate stationary point is to consider stationarity defined by an enlargement of a subdifferential. One of the widely used subgradient enlargement is , $\varepsilon$-Fr\'echet subdifferential. Accordingly, a point $\ox$ is called $\varepsilon$-stationary point for the function $f : \R^n \to \oR$ if $0 \in  \hat{\sub} _{\varepsilon} f(\ox)$  which equivalently means that $\inf_{\substack{\|w\| \leq 1}} \d f(\ox)(w) \geq  - \varepsilon .$ In latter way of defining approximate stationarity requires $\ox  \in \dom f$, meaning that in the framework of \eqref{conso} with  $f(x) := \varphi(x)  + (\delta_{X}  \circ F ) (x)  $ one need to assume $\ox$ is a feasible point, i.e.,  $F(\ox) \in X$. Most definitions of approximate stationary points in the literatures take the feasibility assumption into account; see \cite{meh1, meh2, km, ddmp, bkmw, hs}. However, from numerical view point, it is nearly impossible to find a precise solution for the system $F(x) \in X$, unless a few exceptions such as trivial solutions or when $F(x)$ is an affine mapping. To overcome the latter restriction, in our definition of approximate stationarity, we do not require $\ox$ to be a feasible solution but an approximate feasible solution; in the sense that $\dist (F(\ox)  ;  X)$ is sufficiently small. The cost of such relaxation is that, if indeed, $\dist (F(\ox)  ;  X) > 0$, then both $T_{X}(F(\ox) )$ and $N_{X}(F(\ox))$ become empty sets, thus one cannot define approximate stationarity for the constrained optimization problem \eqref{conso} directly. It seems the latter drawback was a reason that many previous articles could not take further step to define approximate stationarity for infeasible solutions for a general constraint optimization problem \eqref{conso}. To resolve the previous problem we consider approximate stationarity with the respect to the unconstrained problem \eqref{pconso} instead of \eqref{conso}. Below we formulate our novel definition of approximate stationarity.
\begin{Definition}[\bf approximate stationary solutions] \label{apps}In the setting of the constrained optimization problem \eqref{conso},  let  $X$ be geometrically derivable  and $F$ be semi-differentiable. For any  given $\varepsilon \geq 0$, we say $\ox \in \R^n$ is an $\varepsilon$-approximate stationary point for problem \eqref{conso} if there exists a $\rho \geq 0$ such that
\begin{itemize}\label{appsdef}
	\item[(1).]  $\dist (F(\ox) \;  ;  \; X)  \leq \varepsilon,$
	\item[(2).]  $\inf_{\substack{\|w\| \leq 1}}   \big(  \d \varphi (\ox)(w)  + \rho  \; \d p(F(\ox)) (\d F(\ox)w)  \big)  \geq - \varepsilon   $ 
\end{itemize}	
where $p(y) := \dist (y \; ; \;  X) $ and $\d p(y)(w)$ is the subderivative of $p$ caculated in \eqref{subdist}. 	
\end{Definition}
\begin{Remark}[\bf comments on approximate stationary solutions] {\rm If $\varphi$ is Lipschitz continuous around $\ox \in R^n$, and $\ox$ is an $\varepsilon$-approximate stationary solution for problem \eqref{conso} in the sense of definition \ref{apps} with $\varepsilon = 0$, then $\ox \in \R^n$ is indeed a stationary point for problem \eqref{conso}. To prove the latter, note that $\varepsilon = 0$ in condition (1) yields feasibility of $\ox$ while condition (2) tells us $\ox$ is a stationary solution for the unconstrained problem \eqref{pconso}. Therefore, $\ox$ is a stationary solution for problem \eqref{conso} by Theorem \ref{spp}(2) and Proposition \ref{sufcon}(1). In applications, it is helpful if we choose the parameter $\rho > 0 $ to be proportioned to $  \frac{1}{ \varepsilon}$, e.g., whenever  $\rho >> \frac{1}{ \varepsilon}$  and  $\dist (F(x) \;  ;  \; X)  \geq \varepsilon,$ then the penalty term $ \rho \; \dist (F(x) \;  ;  \; X) $ impose a significant penalty on the objective function of \eqref{pconso}. Condition (2) in Definition \ref{apps} is equivalent to $0 \in  \hat{\sub} _{\varepsilon} f_{\rho} (\ox)$ where
\begin{equation}\label{apps1}
f_{\rho} (x) = \varphi(x) + \rho \; \dist (F(x)  ;  X).
\end{equation}
To check the latter, notice that the mapping $x \tto \dist (F(x) ; X)  $ is semi-differentiable by Theorem \ref{semidist}(3) and Theorem \ref{fcalc}. Therefore,
\begin{equation}\label{apps2}
	\d f_{\rho} (x) (w) = \d \varphi (\ox)(w)  + \rho  \; \d p(F(\ox)) (\d F(\ox)w) \geq - \varepsilon
\end{equation}
or equivalently $0 \in  \hat{\sub} _{\varepsilon} f_{\rho} (\ox)$. Definition \ref{apps} for approximate stationary solutions depends on the norm we choose to define the distance function. Indeed, condition (1) will stay unchanged, up to multiple factor of $\varepsilon$, as norms are equivalent in finite-dimensions. However, condition (2) may fail for different norms. Therefore, Definition \ref{apps} to be well-defined, we consider the Euclidean norm on $\R^n$. To best of our knowledge, definitions of infeasible approximate stationary points are absent in the literature outside of nonlinear programming family. A remarkable consequence of defining an approximate (infeasible) stationary point in our way is to covering constrained optimization problems much broader than the family of nonlinear programming framework; in particular, he problems with non-Clarke regular constraints and objective functions such as bilevel programming or problems with norm-zero objectives/constraints; see \cite[Examples~2.3, 2.4, 2.5, 2.6]{mf}. Next, we specify our definition of approximate stationarity to the well-known classes of optimization problems.}
\end{Remark}
\begin{Example}[\bf approximate stationary points in non-linear conic programming] \label{nlconic} {\rm In the setting of the constrained optimization problem \eqref{conso}, assume $\varphi$ and $F$ are continuously differentiable and $X$ is a convex closed set.  If $\ox \in  \R^n$ is an $\varepsilon$-approximate stationary point for the optimization problem \eqref{conso}, then the following conditions hold 
		\begin{equation}\label{nlconic1}
			\left\{\begin{matrix}
				\dist (F(\ox) \;  ;  \; X)  \leq \varepsilon, &  \quad  \quad\\\\  
				\|   \nabla \varphi(\ox) + \nabla F(\ox)^{*}  \lambda  \| \leq \varepsilon   &  \mbox{for some} \; \lambda \in N_{X} \big(\proj_{X}(F(\ox))\big)
			\end{matrix}\right.
		\end{equation}
		Conversely, if $\ox$ satisfies \eqref{nlconic1}, then  $\varepsilon$-approximate stationary point for the optimization problem \eqref{conso}  provided that either $F(\ox) \in X$ or $ \dim N_{X} \big(\proj_{X}(F(\ox))\big) \leq 1$. }
\end{Example}       
\begin{proof}
Assume $\ox $ satisfies conditions (1) and (2) in Definition \ref{apps}  with some parameter $\rho \geq 0$. Therefore, we only need to prove the second condition in \eqref{nlconic1}. From condition (2) we have $0 \in  \hat{\sub} _{\varepsilon} f_{\rho} (\ox)$ where $f_{\rho} $ was defined in \eqref{apps1}. Since  $\varphi$ and $F$ are continuously differentiable, $f_{\rho} $  is  (Clarke) Dini-Hadamard regular by \cite[Theorem~5.2, Corollary~5.3]{mm21} . Hence, by Lemma \eqref{epsilonenlarge} we have $ 0 \in  \hat{\sub} f_{\rho} (\ox)  + \varepsilon \B$ where $\hat{\sub} f_{\rho} (\ox) $ can be calculated by the chain rule \eqref{schian}. Indeed, we have
\begin{equation}\label{nlconic2}
	0 \in \nabla \varphi (\ox) + \rho \nabla F(\ox)^* \hat{\sub} p(F(\ox))  +  \varepsilon \B.
\end{equation} 
where $p(y):= \dist (y ; X)$. Following  \cite[Example~ 8.53]{rw}, for the calculation of the subdifferential of the convex function $p(.)$, we consider two cases: 1- if $F(\ox) \notin X$. In the latter case we have $\hat{\sub} p(F(\ox)) =    \frac{1}{\dist (F(\ox) ; X)}\{  F(\ox)    -  \proj_{X}(F(\ox))\}$ thus by defining $\lambda:= \frac{\rho}{\dist (F(\ox) ; X)}  \big(F(\ox)    -  \proj_{X}(F(\ox)\big)$ clearly we have $\lambda \in N_{X} \big(\proj_{X}(F(\ox))\big)$, hence, conditions \eqref{nlconic1} holds. In the second case, we assume $F(\ox) \in X$ thus $\proj_{X}(F(\ox)) = \{ F(\ox)\}$, and we have  $\hat{\sub} p(F(\ox)) =  N_{X} (F(\ox)) \cap \B $ which obviously yields the second condition in \eqref{nlconic1}. To prove the opposite way, let $\ox \in \R^n$ satisfy \eqref{nlconic1} for some $\lambda$, thus we only need to establish \eqref{nlconic2} for some $\rho \geq 0$. If $\lambda = 0$ then evidently \eqref{nlconic2} holds with $\rho = 0$. So we assume $\lambda \neq 0$.  First let us assume $F(\ox) \in X$, hence, $\proj_{X}(F(\ox)) = \{ F(\ox)\}$, and we have  $\hat{\sub} p(F(\ox)) =  N_{X} (F(\ox)) \cap \B $, which yields $\frac{\lambda}{\| \lambda \|} \in  \hat{\sub} p(F(\ox)) $. Now \eqref{nlconic2} holds, by choosing $\rho := \| \lambda \|$. Now assume $F(\ox) \notin X$ and  $ \dim N_{X} \big(\proj_{X}(F(\ox))\big) \leq 1$. It is not hard to check that in the latter case we have $$N_{X} \big(\proj_{X}(F(\ox))\big)  =   \cone \big(   F(\ox)  - \proj_{X}(F(\ox))   \big) = \cone(\lambda).$$ 
Now \eqref{nlconic2} holds, by choosing $\rho >0 $ such that $ \lambda = \frac{\rho}{\dist (F(\ox) ; X)}  \big(F(\ox)    -  \proj_{X}(F(\ox)\big)$.
\end{proof}
\begin{Example}[\bf approximate stationary points in non-linear programming] \label{egkkt} {\rm In the setting of non-linear programming \eqref{nlp}, define $F:= (g_1,...,g_{l},h_{l+1},...,h_{m})$ and $X= \R^{l}_{-} \times \{0\}^{m-l}$. Then, the approximate stationary points in the sense \eqref{nlconic1} reads as the existence of the  multipliers $(\lambda_1,...,\lambda_{l}, \mu_{l+1},...,\mu_{m})  \in \R^{m}$ such that 
\begin{equation}\label{appskkt}
	\left\{\begin{matrix}
		\| \nabla \varphi(\ox) + \sum_{i=1}^{l} \lambda_i  \nabla g_i (\ox) + \sum_{j=l+1}^{m}   \mu_j  \nabla h_j (\ox)\| \leq \varepsilon & \\\\
		g_i(\ox) \leq \varepsilon ~i=1,...,l,\quad |  h_j (\ox) |  \leq \varepsilon ~ j=l+1,...,m&   \\\\
		\lambda_i \geq  0 ~i=1,...,l,\quad   \mu_j  \in \R ~ j=l+1,...,m&   \\\\
			\lambda_i  g_i (\ox)  = 0, \quad i \in I_{\leq} (\ox):=\{i\big|\;g_i (\ox) \leq 0\}&   \\
	\end{matrix}\right.
\end{equation}
}
\end{Example}
\begin{proof}
Let $\ox \in \R^n$ satisfy \eqref{nlconic1} for some $\varepsilon \geq 0$. It  is clear that for all $i$ and $j$ we have 
$$   \max\{g_i(x) , 0\} \leq \dist (F(\ox)  ; X) \leq \varepsilon, \quad     | h_j (\ox)| \leq \dist (F(\ox)  ; X) \leq \varepsilon,$$
which proves the approximate primal feasibility in \eqref{appskkt}.  For any $y \in X$, one can verify that
$$  N_{X}(y) = \{  (\lambda_1,...,\lambda_{l}, \mu_{l+1},...,\mu_{m})  \in \R^{m} \; \big| ~ 	\lambda_i \geq  0, ~ y_i \lambda_i = 0 ~i=1,...,l\} .$$
Additionally, due to structure of $X$, the projection over $X$ gets a simple closed form. Indeed, for each $y \in \R^m$ we have $$   \proj_{X} (y) = \big(   y_i  \; \mbox{if} \;   1 \leq i \leq l, \; y_i \leq 0  \; \textbf{\big|}\; 0  \; \mbox{otherwise} \; \big) .$$ 
Therefore, the other conditions in \eqref{appskkt} involving Lagrangian multipliers follows from the second condition in \eqref{nlconic1}. Conversely, assume $\ox \in \R^n$ satisfies \eqref{nlconic1} for some $\varepsilon \geq 0$, then $\ox$ satisfies \eqref{nlconic2} for with $m \varepsilon $. Indeed, due to description of the Normal cone to and the projection on $X$, condition \eqref{nlconic1} holds with the same $\varepsilon$. However, for approximate feasibility we can write
$$       \dist (F(\ox)  ; X) \leq   \sum_{i=1}^{l} \max\{g_i(x) , 0\}  + \sum_{j=l+1}^{m}   | h_j (\ox)|  \leq  m \varepsilon .$$
\end{proof}
\begin{Remark}[\bf comments on approximate stationary points] \label{reapp} {\rm As Examples \ref{nlconic} and \ref{egkkt} show, the approximate stationary solutions in the sense of \eqref{nlconic1} and \eqref{appskkt} are implied by \eqref{appsdef}, thus any numerical method computing an approximate stationary solution, will compute approximate stationary points in the sense \eqref{nlconic1} and \eqref{appskkt}; see the next sub-section. The approximate stationary solution for the non-linear conic programming in the sense \eqref{nlconic1} is new in the optimization literature and was not defined outside of non-linear programming. One could consider a more relaxed version of \eqref{nlconic1} by considering the $\varepsilon-$enlargement of the convex Normal cone. Indeed, replacing the normal cone with in the second condition of \eqref{nlconic} by $$  N_{X}^{\varepsilon}  \big(\proj_{X}(F(\ox))\big) := N_{X} \big(\proj_{X}(F(\ox))\big) + \varepsilon\B, $$  
one can allow approximate Lagrange multipliers in \eqref{nlconic1}, hence, as a consequence, the dual feasibility and complementarity slackness in \eqref{appkkt} can be rewritten in the approximate form, as we mentioned in the introduction section. This kind of approximate stationary solutions was previously investigated in the non-linear programming framework; see \cite{ams}. Some feasible versions of approximate stationarity solutions have already been investigated in the literature; s see \cite{ams, aghmr, meh1, meh2, bkmw}.   
}
\end{Remark}
\subsection{Computing approximate stationarity points}
In this sub-section, we aim to design a practical numerical algorithm to compute an $\varepsilon$-approxiate stationarity point for a given $\epsilon > 0$. In section \ref{sec03}, we observed that there is a significant link between stationary points of the constrained optimization problem \eqref{conso} and the unconstrained optimization problem \eqref{pconso}. The latter suggests that in order to obtain a stationary point for the constrained optimization problems \eqref{conso}, we can find a stationary point for the unconstrained problem  \eqref{pconso}. This method is unfavorable in two senses: 1- The obtained solution might not be feasible or even approximately feasible, 2- the unconstrained problem \eqref{pconso} is generally non-differentiable despite the smoothness of data of the problem \eqref{conso}.  To overcome the previous two drawbacks first we utilize higher-order penalties to reduce non-smoothness and we seek a stationary solution to the unconstrained problem in a specific sub-level set. In the following theorem, we will make our statement precise. Before continuing further, in this subsection, we make two basic assumptions on the constrained  problem \eqref{conso};
\begin{itemize}
\item[(\textbf{A1)}] In the setting of the the constrained optimization problem \eqref{conso} assume that $\varphi$ is lower semi-continuous, $F$ is semi-differentiable, there exists $x_0 \in \R^n$ such that  $F(x_0) \in X .$\\
\item[(\textbf{A2)}]There exist parameters $M \geq 0,  \rho_{0}  \geq 0,$ and $\alpha \geq 2$ such that for all $x \in \R^n$
$$      -M \leq  f_{\alpha , \rho_{0}} (x):= \varphi (x) + \rho_{0}  \; \dist^{\alpha} (F(x)  \; ; \;  X)  $$    
\end{itemize}
(A1) is a standard assumption about continuity and feasibility while a sufficient condition for (A2) is that $\varphi$ is being bounded below on $\R^n .$ We are mainly interested in (A2) holds for $\alpha = 2$. 
\begin{Theorem}[\bf finding approximate stationary points by solving a smooth equation]\label{ss} In the setting of problem \eqref{conso}, assume assumptions (A1) and (A2) hold.  Given $\varepsilon >0$, pick $\rho > 0$ sufficiently large such that $\rho \geq  \rho_{0}  +  \frac{\varphi(x_0) + M}{ \varepsilon^{\alpha}}.$  Now consider the following optimization problem
\begin{equation}\label{ss1} 
\mbox{minimize} \;\;  f_{\alpha , \rho}(x) =  \varphi(x) + \rho \; \dist^{\alpha} (F(x) \; ; \ ;X) \quad \mbox{subject to}\;\;  x \in \R^n.	 
\end{equation}
Then, the following assertions hold.
\begin{itemize}
\item[(1).] The unconstrained problem \eqref{ss1} has a $\varepsilon$-approximate stationary point in the set $S_{\alpha ,\rho}(x_0) := \{ x \big|\; f_{\alpha , \rho}(x)\leq \varphi(x_0) \}$ \\
\item[(2).] Every $\varepsilon$-stationary point for the unconstrained problem \eqref{ss1} in $S_{\alpha ,\rho}(x_0)$ is a $\varepsilon$-approximate statinary point for the constrained optimization \eqref{conso} 
\end{itemize}
\end{Theorem}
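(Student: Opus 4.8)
The plan is to establish assertion (2) first and then obtain assertion (1) from it together with Ekeland's variational principle, since an $\varepsilon$-approximate stationary point will be produced as an $\varepsilon$-stationary point of the smoothed penalty.

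\emph{Assertion (2).} Let $\ox$ be an $\varepsilon$-stationary point of \eqref{ss1} lying in $S_{\alpha,\rho}(x_0)$. First I would extract the approximate feasibility condition (1) of Definition \ref{apps}. The membership $\ox\in S_{\alpha,\rho}(x_0)$ reads $\varphi(\ox)+\rho\,\dist^{\alpha}(F(\ox);X)\le\varphi(x_0)$, while (A2) gives $\varphi(\ox)+\rho_{0}\,\dist^{\alpha}(F(\ox);X)\ge -M$. Subtracting and inserting the prescribed lower bound $\rho\ge\rho_{0}+(\varphi(x_0)+M)/\varepsilon^{\alpha}$ (note $\varphi(x_0)+M\ge 0$, obtained by evaluating (A2) at the feasible point $x_0$) would yield $(\rho-\rho_{0})\,\dist^{\alpha}(F(\ox);X)\le\varphi(x_0)+M$, hence $\dist(F(\ox);X)\le\varepsilon$.

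The substantive part of (2) is to convert the $\varepsilon$-stationarity of the smooth penalty $f_{\alpha,\rho}$ into condition (2) of Definition \ref{apps}. Writing $p(y):=\dist(y;X)$ and $\Psi:=(p\circ F)^{\alpha}$, I would compute $\d\Psi(\ox)$ in two cases. If $F(\ox)\notin X$, then $p$ is semi-differentiable at $F(\ox)$ by Theorem \ref{semidist}(1), so $p\circ F$ is semi-differentiable at $\ox$ by Proposition \ref{semicalc}; composing with $t\mapsto t^{\alpha}$, which is differentiable at $p(F(\ox))>0$, would give $\d\Psi(\ox)(w)=\alpha\,p(F(\ox))^{\alpha-1}\,\d p(F(\ox))(\d F(\ox)w)$. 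If $F(\ox)\in X$, a direct estimate using the calmness of $F$ at $\ox$ together with $\alpha\ge 2$ shows $\Psi(\ox+tw')/t\le\ell^{\alpha}t^{\alpha-1}\|w'\|^{\alpha}\to 0$, so $\d\Psi(\ox)(w)=0$, consistent with the same formula. In both cases $\Psi$ is semi-differentiable, so the subderivative sum rule applies (it requires only that one summand be semi-differentiable, so no Lipschitz hypothesis on $\varphi$ is needed), giving $\d f_{\alpha,\rho}(\ox)(w)=\d\varphi(\ox)(w)+\rho\,\alpha\,p(F(\ox))^{\alpha-1}\,\d p(F(\ox))(\d F(\ox)w)$. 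Setting $\rho':=\rho\,\alpha\,p(F(\ox))^{\alpha-1}\ge 0$, the stationarity inequality $\inf_{\|w\|\le 1}\d f_{\alpha,\rho}(\ox)(w)\ge-\varepsilon$ is exactly condition (2), which finishes (2).

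\emph{Assertion (1).} By (2) it suffices to produce an $\varepsilon$-stationary point of $f_{\alpha,\rho}$ inside $S_{\alpha,\rho}(x_0)$, and the key tool here is Ekeland's variational principle. The function $f_{\alpha,\rho}$ is lower semicontinuous (as $\varphi$ is l.s.c.\ and the penalty is continuous) and bounded below by $-M$ thanks to (A2). Anchoring at $x_0$ with its suboptimality $\delta:=\varphi(x_0)-\inf f_{\alpha,\rho}\ge 0$ and taking Ekeland radius $\lambda:=\delta/\varepsilon$, I would obtain $\ox$ with $f_{\alpha,\rho}(\ox)\le f_{\alpha,\rho}(x_0)=\varphi(x_0)$ — so $\ox\in S_{\alpha,\rho}(x_0)$ — that moreover globally minimizes $f_{\alpha,\rho}(\cdot)+\varepsilon\|\cdot-\ox\|$. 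Since this perturbation is semi-differentiable with subderivative $w\mapsto\varepsilon\|w\|$, minimality would give $\d f_{\alpha,\rho}(\ox)(w)+\varepsilon\|w\|\ge 0$ for all $w$, i.e.\ $\inf_{\|w\|\le 1}\d f_{\alpha,\rho}(\ox)(w)\ge-\varepsilon$, so $\ox$ is $\varepsilon$-stationary (the degenerate case $\delta=0$ means $x_0$ already minimizes $f_{\alpha,\rho}$ and is trivially $\varepsilon$-stationary).

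I expect the main obstacle to be the feasible case $F(\ox)\in X$ of the chain-rule computation in assertion (2): there the outer function $p$ itself need not be semi-differentiable unless $X$ is geometrically derivable, yet one must still show that the higher power $\dist^{\alpha}$ is semi-differentiable with vanishing semi-derivative. This is precisely where the exponent $\alpha\ge 2$ enters, and it is the reason this smoothed penalty, rather than the exact $\ell_1$-penalty, supplies enough regularity to apply the sum rule while keeping $\varphi$ merely lower semicontinuous.
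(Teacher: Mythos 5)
Your proposal is correct. For assertion (2) you follow essentially the paper's own route: the identical subtraction of (A2) from the sublevel-set inequality $f_{\alpha,\rho}(\ox)\le\varphi(x_0)$ yields approximate feasibility, and the identical chain-rule computation converts $\varepsilon$-stationarity of $f_{\alpha,\rho}$ into condition (2) of Definition \ref{apps} with parameter $\rho'=\alpha\rho\,\dist^{\alpha-1}(F(\ox);X)$; the only cosmetic difference is that the paper obtains semi-differentiability of $\dist^{\alpha}(F(\cdot);X)$ in one stroke by writing $\dist^{\alpha}$ as the composition of the differentiable map $t\mapsto t^{\alpha/2}$ with $\dist^{2}$, which is semi-differentiable everywhere by Theorem \ref{semidist}(5), whereas you handle the feasible case $F(\ox)\in X$ by a direct calmness estimate --- both are valid. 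The genuine divergence is in assertion (1): you invoke Ekeland's variational principle with radius $\delta/\varepsilon$, producing a point that globally minimizes $f_{\alpha,\rho}(\cdot)+\varepsilon\|\cdot-\ox\|$, whereas the paper perturbs by the quadratic term $\frac{\varepsilon^4}{2}\|x-x_0\|^2$, takes a global minimizer of this coercive lower semicontinuous function, and then needs the a priori bound $\|\ox-x_0\|\le\sqrt{2\varphi(x_0)+2M}/\varepsilon^{2}$, the Cauchy--Schwarz inequality, and the normalization $\varepsilon\le 1/\sqrt{2\varphi(x_0)+2M}$ (assumed without loss of generality) to absorb the perturbation into $\varepsilon\|w\|$. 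Your route buys a cleaner statement: no smallness restriction on $\varepsilon$, no coercivity argument (lower semicontinuity and the bound $f_{\alpha,\rho}\ge -M$, available since $\rho\ge\rho_0$, suffice), and the perturbation's subderivative is exactly $\varepsilon\|w\|$ with no estimation needed; what the paper's route buys is self-containedness, since it uses nothing beyond the existence of minimizers of coercive lower semicontinuous functions. One small point: if $\varphi(x_0)+M=0$ and $\rho=\rho_0$ (which your hypothesis $\rho\ge\rho_0+(\varphi(x_0)+M)/\varepsilon^{\alpha}$ permits), your feasibility subtraction gives no information on $\dist(F(\ox);X)$; the paper explicitly excludes the degenerate case $\varphi(x_0)+M\le 0$ at the outset as trivial (there $x_0$ is already a global minimizer), and you should add the same one-line exclusion.
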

\begin{proof} Note that we implicitly exclude the trivial case $  \varphi(x_0) + M  \leq 0$, otherwise $x_0$ becomes a global minimzer of \eqref{conso} by (A2). To establish (1), without loss of generality, assume $  \varepsilon \leq \frac{1}{ \sqrt{2 \varphi(x_o) +  2 M} }$.  Consider the following auxillary optimization problem 
\begin{equation}\label{ss2}
	\mbox{minimize} \;\;  f_{\alpha , \rho}(x)  +  \frac{ \varepsilon^4}{2}   \|x - x_0\|^2 \quad \mbox{subject to}\;\;  x \in \R^n.
\end{equation}
The objective function of \eqref{ss2} is lower semi-continuous by (A1) and coercive by (A2), thus it has a global minimizer, say $\ox  \R^n$. We claim that $\ox$ is a desired vector. Indeed, $$f_{\alpha , \rho}(\ox)  +   \frac{ \varepsilon^4}{2}   \| \ox - x_0\|^2  \leq  f_{\alpha , \rho}(x_0)   = \varphi(x_0)$$ 
Not only the above proves that $f_{\alpha , \rho}(\ox) \leq \varphi(x_0) $ but also since $- M \leq f_{\alpha , \rho}(\ox) $ we get the following uper bound for $\|\ox - x_0\|$
\begin{equation}\label{ss3}
	  \| \ox - x_0\|  \leq \frac{\sqrt{2\varphi (x_0) + 2 M}}{\varepsilon^2}.
\end{equation}
Turning to prove  the statinarity of $\ox$, note that $\ox $ is a global minimizer of \eqref{ss2} so is a stationary point of \eqref{ss2}, i.e.,
\begin{equation*}\label{ss4}
0 \leq \d f_{\alpha , \rho}(\ox)(w) + \varepsilon^4  \la \ox -x_0 \; , \; w  \ra   \quad \quad \mbox{for all} ~ w \in \R^n,
\end{equation*}
hence, by applying the cauchy schwarz inequality and combining above with \eqref{ss3}, we arrive at
\begin{equation*}\label{ss5}
	0 \leq \d f_{\alpha , \rho}(\ox)(w) + \varepsilon^4  \| \ox -x_0 \|  \| w  \| \leq  \d f_{\alpha , \rho}(\ox)(w) + \varepsilon^2  \sqrt{2\varphi (x_0) + 2 M} \| w  \|.
\end{equation*} 
Acording to the choice of the $\varepsilon > 0$, we get 
\begin{equation*}\label{ss6}
	0 \leq \d f_{\alpha , \rho}(\ox)(w) + \varepsilon  \| w \|  \quad \quad \mbox{for all} ~ w \in \R^n,
\end{equation*}
meaning that $\ox$ is a $\varepsilon$-approximate stationary point for problem \eqref{ss1}. To prove (2), let us first recall the function $ y \to \dist^{\alpha} (y ;  X)$ is semi-differentiable as it can be viewed as a composition of the differetiale function $t \to t^{\frac{\alpha}{2}}$ and $\dist^{2} (y ; X)$; see Theorem \ref{semidist} (5). Therefore, thanks to the chain rule \ref{fcalc1},  the function $ x \to \dist^{\alpha} (F(x)  ;  X)$ is semi-differentibale on $\R^n$.  Now turninig to the proof of (2), assume $\ox$ is a $\varepsilon$-approximate stationary point for problem \eqref{ss1} satisfying $f_{\alpha , \rho}(\ox) \leq \varphi(x_0) $. Based on previous discussion, the subderivative of $f_{\rho} $ can be calculated by the chain rule \eqref{fcalc1}, thus for all $w \in \R^n$ with $\|w\| \leq 1$ we get
\begin{equation*}\label{ss7}
\d f_{\alpha , \rho}(\ox)(w)	= \d \varphi (\ox)(w) + \alpha \rho \; \dist^{\alpha - 1} (F(\ox) ;  X) \; \d p (F(\ox))(\d F(\ox)(w)) \geq  - \varepsilon
\end{equation*}
where $p(y) : = \dist(y ; X)$.  Thus, we only need to show that $\dist (F(\ox)  ;  X) \leq \varepsilon$. Indeed,  by assumption (A2) we have
\begin{equation*}\label{ss8}
 -M \leq f_{\alpha , \rho_{0}} (\ox) =  f_{\alpha , \rho} (\ox) + (\rho_{0} - \rho) \; \dist^{\alpha} (F(\ox)  \; ; \;  X) 
\end{equation*}
By combining the above inequality with $f_{\alpha , \rho}(\ox) \leq \varphi(x_0) $, and considering the choice of the $\rho$ we arrive at
\begin{equation*}\label{ss8}
   \dist^{\alpha} (F(x)  \; ; \;  X)  \leq   \frac{\varphi(x_0) + M}{\rho - \rho_{0}}  \leq \varepsilon^{\alpha}
 \end{equation*}
which completes the proof of the theorem. 
\end{proof}
\begin{Remark}[\bf solving constrained optimization problems by the means of Theorem \ref{ss} ] \label{ssremark} {\rm Theorem \eqref{ss} provides a practical way to compute approximate stationary points for constrained optimization problems. More precisely, to compute an $\varepsilon$-approximate stationary point for problem \eqref{conso} when an feasible solution $x_0$ is in hand, one can set up the unconstrained problem \eqref{ss1} with enough large penalty parameter $\rho$. Therefore, by running any descent method on problem \eqref{ss1}, initialized from $x_0$,  after finite step we reach at a $\varepsilon$-approximate stationary point for problem \eqref{ss1}, which is a $\varepsilon$-approximate stationary point for problem \eqref{conso}  by Theorem \eqref{ss1}(2). In the framework of non-linear conic programming, fuction $f_{\rho}(x)$ in \eqref{ss1} is continuously differentiable so the problem is reduced to solving the equation $\nabla f_{\alpha , \rho}(x) = 0$ (approximatly), which can be done by applying the gradient descent method on $f_{\rho} (x)$ initialized at $x_0$ or semi-smooth newton method on the equation $\nabla f_{\alpha , \rho}(x) = 0$; see \cite{fp}. Very recentldy in \cite{mf}, a generalized version of the gradient descent algorithm, named subderivative method, was proposed to solve unconstrained optimization problems who suffers Clarke irregularity. Next, we combine Theorem \ref{ss} with the subderivative method to propose a first-order method to solve the constrained optimization problem \eqref{conso}. 
}
\end{Remark}
\vspace{.1cm}
 \begin{table}[H]
        \renewcommand{\arraystretch}{1.25}
        \centering
        \label{alg1}
        \begin{tabular}{|
                p{0.9\textwidth} |}
            \hline
            \textbf{The Subderivative Method}\\
 
            \begin{enumerate}[label=(\alph*)]
            
                \item[0.] \textbf{(Initialization)} Pick the tolerance $\epsilon \geq 0$, the penalty parameter $\rho$, the starting point $x_0 \in \R^n$, and set $k=0 .$
           
                \item[1.] \textbf{(Termination)} If $\min_{\|w\| \leq 1}  \d f_{\alpha , \rho}(x_k)(w) \geq  - \varepsilon $ then Stop.
                \item[2.] \textbf{(Direction Search)} Pick $w_k \in \mbox{arg min}_{\|w\| \leq 1 }   \d f_{\alpha , \rho}(x_k)(w).$
                \item[3.] \textbf{(Line Search)} Choose the step size $\alpha_{k}  > 0$ through a line search method.
                \item[4.] \textbf{(Update)} Set $x_{k+1} : = x_k + \alpha_k  w_k$ and $k+1  \leftarrow k$ then go to step 1. 
            \end{enumerate}
            \\ \hline
        \end{tabular}
    \end{table}
\vspace{.1cm} 
One of the most efficient and practically used line searches is \textit{Armijo backtracking} method which can be adapted to the subderivative version; indeed, fix the parameter $\mu \in (0 , 1)$, called a reduction multiple, and assume that we are in the $k^{th} -$iteration. The Armijo backtracking line search determines the step-size $\alpha_k >0$ in the following way:
If the following inequality holds for $\alpha_k = 1$, then the step-size is chosen $\alpha_k =1$.  
\begin{equation}\label{backtrine}
	f(x_k + \alpha_k w_k) - f(x_k) <\frac{\alpha_k}{2}  \d f(x_k) (w_k).
\end{equation}
Otherwise, keep updating $\alpha_k$ by multiplying it by $\mu$ until the above inequality holds. In \cite[Lemma~4,6]{mf}emma, the author showed that the backtracking method terminates after finite numbers of updates under some mild assumptions which are the case in our framework.

\begin{Theorem}[\bf global convergence of the subderivative method with the Armjo backtracking line search ]\label{rate_TH} Let assumptions (A1) and (A2) hold with $\alpha =2$. Additionally, assume that $F$ is differentiable with Lipschitz continuous derivative and $\varphi$ satisfies the descent property \eqref{hdes}. Let $\{x_k\}_{k =0}^{\infty}$ be a sequence generated by the subderivative method with the Armijo backtracking line search. Then, after atmost $O(\frac{1}{\varepsilon^2})$ iterations, the subdervative method finds a $\varepsilon$-approximate stationary point for the constrained optimization problem \eqref{conso}. 
\end{Theorem}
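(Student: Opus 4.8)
The plan is to reduce the theorem to two facts that are already at our disposal: first, Theorem \ref{ss}(2), which guarantees that any $\varepsilon$-stationary point of the smoothed penalized problem \eqref{ss1} lying in the sublevel set $S_{2,\rho}(x_0)$ is automatically an $\varepsilon$-approximate stationary point of \eqref{conso}; and second, the convergence theory of the subderivative method from \cite{mf}, which yields the $O(\varepsilon^{-2})$ rate once the objective is shown to admit a quadratic descent majorization. Thus the entire argument rests on verifying that $f_{2,\rho}(x)=\varphi(x)+\rho\,\dist^2(F(x);X)$ satisfies, on the region swept by the iterates, an estimate of the form
\begin{equation*}
f_{2,\rho}(y)\le f_{2,\rho}(x)+\d f_{2,\rho}(x)(y-x)+\tfrac{L}{2}\|y-x\|^2,
\end{equation*}
namely the descent property \eqref{hdes} for the composite penalized objective.

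First I would establish this majorization. The term $\varphi$ obeys \eqref{hdes} by hypothesis. Writing $q:=\tfrac12\dist^2(\cdot;X)$, so that the penalty equals $2\rho\,(q\circ F)$, Theorem \ref{semidist}(5) gives $q(v)\le q(u)+\d q(u)(v-u)+\tfrac12\|v-u\|^2$, and the chain rule \eqref{fcalc1} (valid since $q$ and $F$ are semi-differentiable) identifies $\d(q\circ F)(x)(h)=\d q(F(x))(\nabla F(x)h)$. To push the descent property of $q$ through $F$, I would expand $F(x+h)=F(x)+\nabla F(x)h+r(x,h)$ with $\|r(x,h)\|\le\tfrac{L_F}{2}\|h\|^2$ (the Taylor estimate coming from the Lipschitz continuity of $\nabla F$), apply the descent property of $q$ at $u=F(x)$, $v=F(x+h)$, and then replace $\d q(F(x))(\nabla F(x)h+r)$ by $\d q(F(x))(\nabla F(x)h)$ at the cost of $\dist(F(x);X)\,\|r\|$, using that $w\mapsto\d q(F(x))(w)=\min\{\la F(x)-z,w\ra:z\in\proj_{X}(F(x))\}$ is $\dist(F(x);X)$-Lipschitz in $w$ by \eqref{hsub}. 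The residual quadratic terms are controlled for steps $\|h\|\le1$ by a local bound on $\|\nabla F(x)\|$. What makes the constant uniform is that the iterates never leave $S_{2,\rho}(x_0)$, and there, exactly as in the proof of Theorem \ref{ss}, assumption (A2) forces $(\rho-\rho_0)\dist^2(F(x);X)\le\varphi(x_0)+M$, so $\dist(F(x);X)$ is bounded; together with the Lipschitz modulus of $\nabla F$ this yields a single $L>0$.

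With the majorization in hand the descent analysis is standard. Let $m_k:=-\min_{\|w\|\le1}\d f_{2,\rho}(x_k)(w)\ge0$ be the stationarity residual; the method proceeds only while $m_k>\varepsilon$, and then the minimizer $w_k$ has $\|w_k\|=1$. Since $\d f_{2,\rho}(x_k)(\cdot)$ is positively homogeneous, the majorization gives $f_{2,\rho}(x_k+\alpha w_k)-f_{2,\rho}(x_k)\le-\alpha m_k+\tfrac{L}{2}\alpha^2$, so the Armijo test \eqref{backtrine} holds whenever $\alpha\le m_k/L$. Hence backtracking (which terminates in finitely many updates by \cite[Lemma~4.6]{mf}) accepts a step $\alpha_k\ge\mu\min\{1,m_k/L\}$ and produces a decrease $f_{2,\rho}(x_k)-f_{2,\rho}(x_{k+1})>\tfrac{\alpha_k}{2}m_k\ge\tfrac{\mu}{2L}\min\{L,m_k\}\,m_k$, which is at least $\tfrac{\mu}{2L}\varepsilon^2$ as long as $m_k>\varepsilon$. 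Because $f_{2,\rho}$ is non-increasing along the iterates, starts at $f_{2,\rho}(x_0)=\varphi(x_0)$ (as $F(x_0)\in X$ by (A1)), and is bounded below by $-M$ via (A2), the total decrease is at most $\varphi(x_0)+M$; telescoping therefore bounds the number of non-terminating iterations by $2L(\varphi(x_0)+M)/(\mu\varepsilon^2)=O(\varepsilon^{-2})$. At the terminating index the criterion $\min_{\|w\|\le1}\d f_{2,\rho}(x_k)(w)\ge-\varepsilon$ holds and, by monotonicity, $x_k\in S_{2,\rho}(x_0)$; Theorem \ref{ss}(2) then certifies $x_k$ as an $\varepsilon$-approximate stationary point of \eqref{conso}.

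The main obstacle is the composite descent property of the second paragraph: unlike the smooth unconstrained case, $q=\tfrac12\dist^2(\cdot;X)$ is merely semi-differentiable (since $X$ need not be convex), so the nonlinear remainder produced by $F$ cannot be absorbed by the linearity of a gradient and must instead be handled through the Lipschitz continuity of the semiderivative $\d q$ combined with the sublevel-set bound on $\dist(F(\cdot);X)$. Keeping the majorization constant $L$ uniform over the region covered by the iterates is the delicate point, and everything else is a routine descent-method telescoping argument.
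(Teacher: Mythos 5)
Your overall route coincides with the paper's: reduce, via Theorem \ref{ss}(2), to finding an $\varepsilon$-stationary point of $f_{2,\rho}$ inside $S_{2,\rho}(x_0)$, establish a quadratic descent majorization for $f_{2,\rho}$, and then run the standard Armijo telescoping argument. The difference is that the paper disposes of the last two ingredients by citation --- it invokes \cite[Proposition~4.3]{mf} for the descent property of $f_{2,\rho}=\varphi+\big([e_{2\rho}\delta_X]\circ F\big)$ and \cite[Theorem~4.7]{mf} for the $O(\varepsilon^{-2})$ rate --- whereas you reconstruct both inline. Your telescoping analysis (step-size lower bound $\alpha_k\ge\mu\min\{1,m_k/L\}$, per-iteration decrease at least $\tfrac{\mu}{2L}\varepsilon^2$ while $m_k>\varepsilon$, total decrease bounded by $\varphi(x_0)+M$ via (A1)/(A2), monotonicity keeping the iterates and the terminating point in $S_{2,\rho}(x_0)$) is correct and is a genuine service, since the paper gives no details there.

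However, the step you yourself flag as delicate does not close. Pushing the descent property of $q=\tfrac12\dist^2(\cdot;X)$ through $F$ produces, besides the remainder you control by $\dist(F(x);X)\,\tfrac{L_F}{2}\|h\|^2$, the term $\tfrac12\|F(x+h)-F(x)\|^2$, and bounding this by $\tfrac{C}{2}\|h\|^2$ requires a uniform bound on $\|\nabla F\|$ over the region where the majorization is invoked. Neither the Lipschitz continuity of $\nabla F$ nor the (A2)-bound $\dist(F(x);X)\le\sqrt{(\varphi(x_0)+M)/(\rho-\rho_0)}$ on $S_{2,\rho}(x_0)$ delivers this when the sublevel set is unbounded; and you cannot restrict attention to the actual trajectory either, since the trajectory (through the step-size rule) already depends on the constant $L$ you are trying to produce. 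Concretely, take $n=2$, $m=1$, $F(x,y)=xy$, $X=\{0\}$, $\varphi\equiv 0$, $x_0=(0,0)$: all hypotheses of the theorem hold, $S_{2,\rho}(x_0)=\{(x,y)\,:\,xy=0\}$ is unbounded, $\|\nabla F(x,y)\|=\|(y,x)\|$ is unbounded on it, and $f_{2,\rho}(x,y)=\rho x^2y^2$ admits no uniform quadratic majorization there: at the point $(T,0)$ in the direction $(0,1)$ one needs $L\ge 2\rho T^2$. So ``a single $L>0$'' does not exist under the stated hypotheses; one needs something extra, e.g.\ $F$ globally Lipschitz, or coercivity of $f_{2,\rho}$ making the sublevel set compact (which is exactly the repair suggested in the paper's remark following the theorem). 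To be fair, the paper's own proof hides this same issue inside the citation to \cite[Proposition~4.3]{mf}; but your inline justification, as written, is insufficient at precisely this point.
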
	 
\begin{proof} Thanks to Theroem \ref{ss}, we only need to show $ \inf_{\|w\| \leq 1} \d f_{2, \rho}(x_k) (w) \geq - \varepsilon$ for some $k= O(\frac{1}{\varepsilon^2})$.  The latter immedietly follows from \cite[Theorem~4.7]{mf} because the function $$f_{2, \rho} (x) = \varphi (x) + \big([e_{2\rho} \delta_{X}] \circ F\big)(x)  $$ satisfies the descent property \eqref{hdes}; see \cite[Proposition~4.3]{mf}. 	
\end{proof}	
\begin{Remark}[\bf comments on the convergence subderivative method]{\rm If $\varphi$ is semi-differentiable then the subderivate method applied on the function $f_{2, \rho}(.)$ is descent method \cite[Remark~4.1]{mf}, thus for the convergence analysis we do  not need to require $\nabla F$ to be globally Lipschitz continuous. Indeed, all we need is that $f_{2,\rho} (.)$ satisfies the descent property on the sublevel set $S(x_0)$, defined in Theorem \ref{ss}; see \cite[Theorem~4.8]{mf}. For instance, if $f_{2, \rho} (.)$ is coercive and $F$ is twice continuously differentiable then  $f_{\rho} (.)$ satisfies the descent property on any of its sublevel set. It is worth to mention that the assumptions made in Theorem \ref{rate_TH} are weak enough to allow us cover some classes of constrained optimization problems with non-smooth objective and constraint functions. In particular, constrained optimization problems in the framework
		\begin{equation*}\label{lr} 
			\mbox{minimize} \;\;  \varphi(x)  \quad \mbox{subject to}\;\;  (h \circ F)(x)  \leq b	 
		\end{equation*}		 
		where $\varphi$ is coercive and satisfying the descent property \eqref{hdes},  $F$ is twice continuously differentiable, and $h$ is a lower semi-continous function (possibly non-smooth and non-convex). By setting $X := \{ y \in \R^m \big| \; h(y) \leq b \}$, the above problems boils down to \eqref{conso}. In the previous example functions $\varphi$ and $h$ can be non-Clarke regular even discontinuous; e.g., $\varphi$ can be a non-smooth concave function and $h$ can be the norm-zero and rank functions; see \cite[Proposition~4.3, Example~4.11]{mf}  and \cite{cp}.  
}          
\end{Remark}    

\small

\end{document}